%%%%%%%% ICML 2019 EXAMPLE LATEX SUBMISSION FILE %%%%%%%%%%%%%%%%%

\documentclass{article}

% Recommended, but optional, packages for figures and better typesetting:
\usepackage{microtype}
\usepackage{graphicx}

\usepackage{booktabs} % for professional tables

% hyperref makes hyperlinks in the resulting PDF.
% If your build breaks (sometimes temporarily if a hyperlink spans a page)
% please comment out the following usepackage line and replace
% \usepackage{icml2018} with \usepackage[nohyperref]{icml2018} above.
%\usepackage{hyperref}

% Attempt to make hyperref and algorithmic work together better:

% Use the following line for the initial blind version submitted for review:

\usepackage{amssymb}
\usepackage{mathrsfs}
\usepackage{amsfonts}
\usepackage{amsmath}
\usepackage{graphicx}
\usepackage{multirow}
\usepackage{float}
\usepackage{booktabs}
\usepackage{algorithmic}
\usepackage{color}
\usepackage{tabu}

\usepackage{graphics}
\usepackage{amssymb}%
\usepackage{url}

\usepackage{epstopdf}
\usepackage{subcaption}
\usepackage[pdftex,pdfstartview=FitB]{hyperref}
\usepackage{esint}
\usepackage{bm}
\usepackage{bbm}
\usepackage{url}            % simple URL typesetting
\usepackage{booktabs}       % professional-quality tables
\usepackage{nicefrac}       % compact symbols for 1/2, etc.
\usepackage{microtype}      % microtypography
\usepackage{color}
\usepackage{tabu}
\usepackage{comment}
\usepackage[standard,hyperref,amsmath]{ntheorem}
\usepackage{graphicx}
\usepackage{enumitem}
\usepackage{textcomp}
\usepackage{setspace}
\usepackage{cleveref}
\usepackage{latexsym}
\usepackage[round]{natbib}
\usepackage{alltt}
\usepackage{xspace}

% hyperref makes hyperlinks in the resulting PDF.
% If your build breaks (sometimes temporarily if a hyperlink spans a page)
% please comment out the following usepackage line and replace
% \usepackage{icml2019} with \usepackage[nohyperref]{icml2019} above.
%\usepackage{hyperref}

% Attempt to make hyperref and algorithmic work together better:
%\newcommand{\theHalgorithm}{\arabic{algorithm}}

% Use the following line for the initial blind version submitted for review:
%\usepackage[]{icml2019}

% If accepted, instead use the following line for the camera-ready submission:
\usepackage[accepted]{icml2019}

\newtheorem{assumption}{Assumption}

\newcommand{\calG}{\mathcal{G}}
\newcommand{\calC}{\mathcal{C}}
\newcommand{\calD}{\mathcal{D}}
\newcommand{\calB}{\mathcal{B}}

\newcommand{\calL}{\mathcal{L}}

\newcommand{\calR}{\mathcal{R}}
\newcommand{\calS}{\mathcal{S}}
\newcommand{\calO}{\mathcal{O}}
\newcommand{\calP}{\mathcal{P}}
\newcommand{\calQ}{\mathcal{Q}}
\newcommand{\calW}{\mathcal{W}}
\newcommand{\calU}{\mathcal{U}}
\newcommand{\calV}{\mathcal{V}}

\newcommand{\One}{\bm{1}}
\newcommand{\zero}{\bm{0}}
\newcommand{\RR}{\mathbb{R}}

\newcommand{\Rp}{\RR_+}

\newcommand{\NN}{\mathbb{N}}

\newcommand{\e}{\mathrm{e}}

\newcommand{\bX}{\bm{X}}
\newcommand{\bx}{\bm{x}}
\newcommand{\bY}{\bm{Y}}
\newcommand{\by}{\bm{y}}

\DeclareMathOperator*{\argmin}{\mathrm{argmin}}

\newcommand{\dom}{\mathrm{dom}}

\newcommand{\bv}{\bm{v}}
\newcommand{\bw}{\bm{w}}

\newcommand{\bU}{\bm{U}}
\newcommand{\bV}{\bm{V}}
\newcommand{\bW}{\bm{W}}

\newcommand{\bs}{\bm{s}}
\newcommand{\bu}{\bm{u}}

\newcommand{\bg}{\bm{g}}

\newcommand{\bcL}{\overline{\calL}}
\newcommand{\KL}{K\L\xspace}
\newcommand{\res}{\mathrm{res}}

 % Yuan Yao's comments

\numberwithin{equation}{section}

% The \icmltitle you define below is probably too long as a header.
% Therefore, a short form for the running title is supplied here:
\icmltitlerunning{Global Convergence of Block Coordinate Descent in Deep Learning}

\begin{document}

\twocolumn[
\icmltitle{Global Convergence of Block Coordinate Descent in Deep Learning}

% It is OKAY to include author information, even for blind
% submissions: the style file will automatically remove it for you
% unless you've provided the [accepted] option to the icml2019
% package.

% List of affiliations: The first argument should be a (short)
% identifier you will use later to specify author affiliations
% Academic affiliations should list Department, University, City, Region, Country
% Industry affiliations should list Company, City, Region, Country

% You can specify symbols, otherwise they are numbered in order.
% Ideally, you should not use this facility. Affiliations will be numbered
% in order of appearance and this is the preferred way.
\icmlsetsymbol{equal}{*}

\begin{icmlauthorlist}
\icmlauthor{Jinshan Zeng}{jxnu,hkust,equal}
\icmlauthor{Tim Tsz-Kit Lau}{nu,equal}
\icmlauthor{Shao-Bo Lin}{cityuhk}
\icmlauthor{Yuan Yao}{hkust}
\end{icmlauthorlist}

\icmlaffiliation{jxnu}{School of Computer and Information Engineering, Jiangxi Normal University, Nanchang 330022, Jiangxi, China}
\icmlaffiliation{nu}{Department of Statistics, Northwestern University, Evanston, IL 60208, USA}
\icmlaffiliation{cityuhk}{Department of Mathematics, City University of Hong Kong, Kowloon, Hong Kong. Part of this work was done while Tim Tsz-Kit Lau was at Department of Mathematics, The Hong Kong University of Science and Technology}
\icmlaffiliation{hkust}{Department of Mathematics, The Hong Kong University of Science and Technology, Clear Water Bay, Kowloon, Hong Kong}

\icmlcorrespondingauthor{Yuan Yao}{yuany@ust.hk}
%\icmlcorrespondingauthor{Eee Pppp}{ep@eden.co.uk}

% You may provide any keywords that you
% find helpful for describing your paper; these are used to populate
% the "keywords" metadata in the PDF but will not be shown in the document
%\icmlkeywords{Variable Splitting, Kurdyka-\L ojasiewicz Property}

\vskip 0.3in
]

% this must go after the closing bracket ] following \twocolumn[ ...

% This command actually creates the footnote in the first column
% listing the affiliations and the copyright notice.
% The command takes one argument, which is text to display at the start of the footnote.
% The \icmlEqualContribution command is standard text for equal contribution.
% Remove it (just {}) if you do not need this facility.

%\printAffiliationsAndNotice{}  % leave blank if no need to mention equal contribution
\printAffiliationsAndNotice{\icmlEqualContribution} % otherwise use the standard text.

\begin{abstract}
Deep learning has aroused extensive attention due to its great empirical success. The efficiency of the block coordinate descent (BCD) methods has been recently demonstrated in deep neural network (DNN) training. However, theoretical studies on their convergence properties are limited due to the highly nonconvex nature of DNN training. In this paper, we aim at providing a general methodology for provable convergence guarantees for this type of methods. In particular, for most of the commonly used DNN training models involving both two- and three-splitting schemes, we establish the global convergence to a critical point at a rate of ${\cal O}(1/k)$, where $k$ is the number of iterations. The results extend to general loss functions which have Lipschitz continuous gradients and deep residual networks (ResNets). Our key development adds several new elements to the Kurdyka-{\L}ojasiewicz inequality framework that enables us to carry out the global convergence analysis of BCD in the general scenario of deep learning.
\end{abstract}

%{\bf Keywords:} Deep learning, Kurdyka-{\L}ojasiewicz inequality, Block coordinate descent, Global convergence

\section{Introduction}
\label{sc:introduction}
Tremendous research activities have been dedicated to deep learning due to its great success in some real-world applications
such as image classification in computer vision \citep{Hinton-imagenet-2012}, speech
recognition \citep{Hinton-speech-2012,Sainath-speech-2013},
statistical machine translation \citep{Devlin-NLP-2014}, and especially outperforming human in Go games \citep{AlphaGo_nature}.

The practical optimization algorithms for training neural networks can be mainly divided into three categories in terms
of the amount of first- and second-order information used, namely,
gradient-based, (approximate) second-order and
gradient-free methods.
Gradient-based methods make use of
backpropagation \citep{Hinton-BP1986} to compute gradients of network parameters. Stochastic gradient descent (SGD) method proposed by
\citet{Robbins-SGD1951} serve as the basis.
Much of research endeavour is devoted to adaptive variants of vanilla SGD in recent years, including AdaGrad \citep{Duchi-adagrad-2011}, RMSProp \citep{Hinton-RMSProp-2012}, Adam \citep{Kingma2015} and AMSGrad \citep{Reddi2018}.
(Approximate) second-order methods mainly include Newton's method \citep{LeCun-normalization-1998}, L-BFGS and conjugate gradient \citep{Le-BFGS-2011}. Despite the great success of these gradient-based methods, they may suffer from the vanishing gradient issue for training deep networks \citep{Goodfellow-et-al-2016}.
As an alternative to overcome this issue, gradient-free methods have been recently adapted to the DNN training, including (but not limited to)
block coordinate descent (BCD) methods \citep{Carreira2014-MAC,Zhang-BCD-NIPS2017,Lau2018,Askari-BCD2018,Gu-BCD2018} and alternating direction method of multipliers (ADMM) \citep{Goldstein-ADMM-DNN2016,Zhang2016-ADMM-DNN}.
The main reasons for the surge of attention of these two algorithms are twofold.
One reason is that they are gradient-free,
and thus are able to deal with non-differentiable nonlinearities and potentially avoid the vanishing gradient issue \citep{Goldstein-ADMM-DNN2016,Zhang-BCD-NIPS2017}.
As shown in \Cref{Fig:acc_deep}, it is observed that vanilla SGD fails to train a ten-hidden-layer MLPs while BCD still works and achieves a moderate accuracy within a few epochs.
The other reason is that BCD and ADMM can be easily implemented in a distributed and parallel manner \citep{Boyd-DADMM2011,Mahajan2017}, therefore in favour of distributed/decentralized scenarios.

\begin{figure}[h!]
	\centering
	\begin{subfigure}[h]{0.49\columnwidth}
		\includegraphics[width=1\columnwidth]{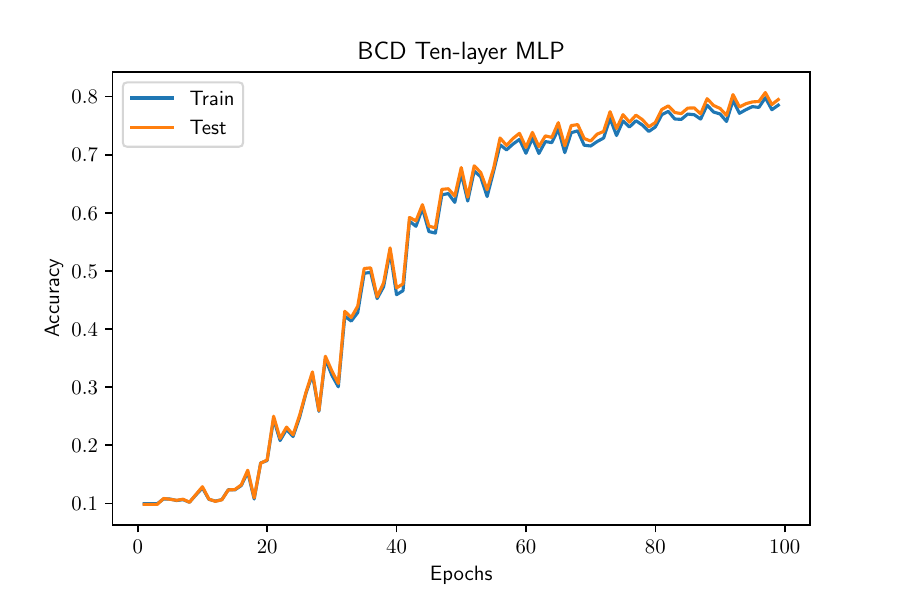}
		\caption{BCD}
		\label{Fig:acc_train_deep_mnist}
	\end{subfigure}
	\begin{subfigure}[h]{0.49\columnwidth}
		\includegraphics[width=1\columnwidth]{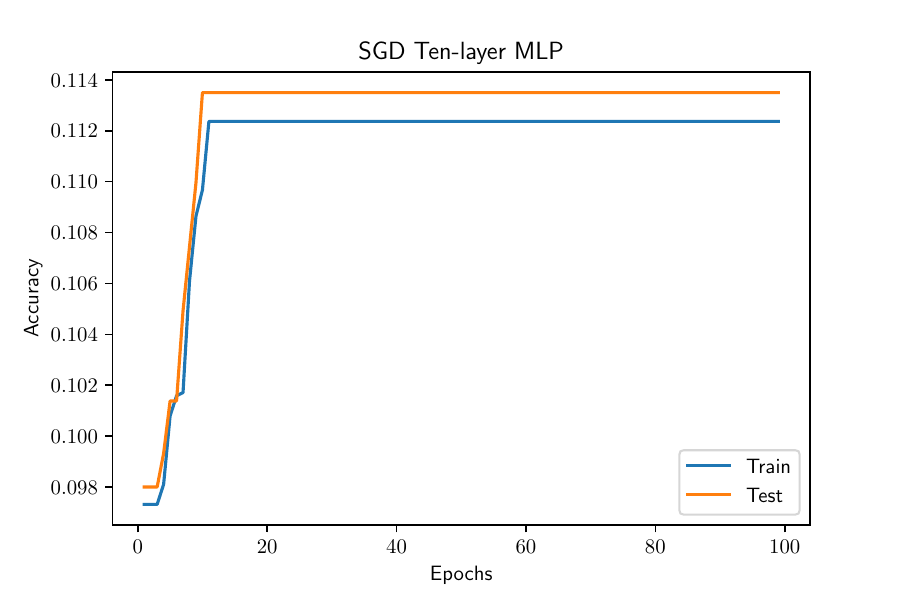}
		\caption{SGD}
		\label{Fig:acc_test_deep_mnist}
	\end{subfigure}
	\caption{Comparison of training and test accuracies of BCD and SGD for training ten-hidden-layer MLPs on the MNIST dataset.
%SGD fails to train such deep neural networks while BCD still works and achieves a moderate accuracy within a few epochs.
Refer to \Cref{sc:BCD-SGD-10layerMLP} for details of this experiment\protect\footnotemark.}
	\label{Fig:acc_deep}
	\vspace*{-8mm}
\end{figure}

The BCD methods currently adopted in DNN training run into two categories depending on the specific formulations of the objective functions, namely,
the \textbf{two-splitting formulation} and \textbf{three-splitting formulation} (shown in \ref{Eq:dnn-admm} and \ref{Eq:dnn-admm1}), respectively.
%,Choromanska-BCD2018
Examples of the two-splitting formulation include
\citet{Carreira2014-MAC,Zhang-BCD-NIPS2017,Askari-BCD2018,Gu-BCD2018},
whilst \citet{Goldstein-ADMM-DNN2016,Lau2018} adopt the three-splitting formulation.
%Some work \citep{Carreira2014-MAC} even argued that the BCD type method is a cortex-like mechanism \citep{Serre2007} to solve the object recognition problems involving the deeply nested systems.
Convergence studies of BCD methods appeared recently in more restricted settings.
In \citet{Zhang-BCD-NIPS2017}, a BCD method was suggested to solve the Tikhonov regularized deep neural network training problem using a lifting trick to avoid the computational hurdle imposed by ReLU.
\footnotetext{Codes available at: \url{https://github.com/timlautk/BCD-for-DNNs-PyTorch}.}
Its convergence was established through the framework of \citet{Xu-Yin-BCD2013},
where the \textit{block multiconvexity}\footnote{A function $f$ with multi-block variables $(\bx_1,\ldots, \bx_p)$ is called \textit{block multiconvex} if it is convex with respect to each block variable when fixing the other blocks, and $f$ is called \textit{blockwise Lipschitz differentiable} if it is differentiable with respect to each block variable and its gradient is Lipschitz continuous while fixing the others.}
and \textit{differentiability} of the unregularized part of the objective function
play central roles in the analysis.
% of \citet{Xu-Yin-BCD2013}.
However, for other commonly used activations such as sigmoid, the convergence analysis of \citet{Xu-Yin-BCD2013} cannot be directly applied since the \textit{block multiconvexity} may be violated.
\citet{Askari-BCD2018} and \citet{Gu-BCD2018} extended the lifting trick introduced by \citet{Zhang-BCD-NIPS2017} to deal with a class of strictly increasing and invertible activations, and then adapted BCD methods to solve the lifted DNN training models.
However, no convergence guarantee was provided in both \citet{Askari-BCD2018} and \citet{Gu-BCD2018}.
Following the similar lifting trick as in \citet{Zhang-BCD-NIPS2017},
\citet{Lau2018} proposed a proximal BCD based on the three-splitting formulation of the regularized DNN training problem with ReLU activation.
The global convergence was also established through the analysis framework of \citet{Xu-Yin-BCD2013}.
However, similar convergence results for other commonly used activation functions are still lacking.

In this paper, we aim to fill these gaps.
Our main contribution is to provide a general methodology to establish the global convergence\footnote{\textit{Global convergence} refers to the case that the algorithm converges starting from any finite initialization.}
of these BCD methods in the common DNN training settings, without requiring the \textit{block multiconvexity} and \textit{differentiability} assumptions as in \citet{Xu-Yin-BCD2013}.
Instead, our key assumption is the Lipschitz continuity of the activation on any bounded set (see \Cref{Assumption-model}(b)).
Specifically, \Cref{Thm:BCD-ConvThm1} establishes the global convergence to a critical point at an $\calO(1/k)$ rate of the BCD methods using the proximal strategy, while extensions to the prox-linear strategy for general losses are provided in \Cref{Coro:globalconv-prox-linear1} and to residual networks (ResNets) are shown in \Cref{Coro:BCD-resnet}. Our assumptions are applicable to most cases appeared in the literature. Specifically in \Cref{Thm:BCD-ConvThm1}, if the loss function, activations, and convex regularizers are lower semicontinuous and either real-analytic (see \Cref{Def:real-analytic}) or semialgebraic (see \Cref{Def:semialgebraic}), and the activations are Lipschitz continuous on any bounded set,
then BCD converges to a critical point at an $\calO(1/k)$ rate starting from any finite initialization, where $k$ is the number of iterations.
Note that these assumptions are satisfied by most commonly used DNN training models,
where (a) the loss function can be any of the squared, logistic, hinge, exponential or cross-entropy losses,
(b) the activation function can be any of ReLU, leaky ReLU, sigmoid, tanh, linear, polynomial, or softplus functions,
and (c) the regularizer can be any of the squared $\ell_2$ norm, squared Frobenius norm, the elementwise $1$-norm, or the sum of squared Frobenuis norm and elementwise $1$-norm (say, in the vector case, the elastic net by \citealp{Zou-elastic-net-2005}), or the indicator function of the nonnegative closed half space or a closed interval (see \Cref{Proposition:special-cases}).
%Moreover, the considered BCD methods embrace most of the existing BCD methods in the literature as special cases.

Our analysis is based on the Kurdyka-{\L}ojasiewicz (\KL) inequality \citep{Lojasiewicz-KL1993,Kurdyka-KL1998}
framework formulated in \citet{Attouch2013}. However there are several different treatments compared to the state-of-the-art work \citep{Xu-Yin-BCD2013} that enables us to achieve the general convergence guarantee aforementioned. According to \citet[Theorem 2.9]{Attouch2013},
the \textit{sufficient descent}, \textit{relative error} and \textit{continuity} conditions, together with the \textit{\KL} assumption yield the global convergence of a nonconvex algorithm.
In order to obtain the \textit{sufficient descent} condition, we exploit the proximal strategy for all non-strongly convex subproblems (see \Cref{alg:BCD-3-split} and \Cref{Lemm:BCD-suff-desc}), without requiring the block multiconvexity assumption used in \citet[Lemma 2.6]{Xu-Yin-BCD2013}.
In order to establish the \textit{relative error} condition, we use the Lipschitz continuity of the activation functions and perform some careful treatments on the specific updates of the BCD methods (see \Cref{Lemm:BCD-grad-bound}),
without requiring the (locally) Lipschitz differentiability of the unregularized part as used in \citet[Lemma 2.6]{Xu-Yin-BCD2013}.
The \textit{continuity} condition is established via the lower semicontinuity assumptions of the loss, activations and regularizers.
The treatments of this paper are of their own value to the optimization community.
The detailed comparisons between this paper and the existing literature can be found in \Cref{sc:proof-ideas}.

The rest of this paper is organized as follows.
\Cref{sc:BCD} describes the BCD methods when adapted to the splitting formulations of DNN training problems.
\Cref{sc:BCD-convergence} establishes their global convergence results, followed by some extensions.
\Cref{sc:proof-ideas} illustrates the key ideas of proof with some discussions.
%\Cref{sc:related-works} provides some discussions on the related works.
We conclude this paper in \Cref{sc:conclusion}.

\section{DNN training via BCD}
\label{sc:BCD}
In this section, we describe the specific forms of BCD involving both two- and three-splitting formulations.

\subsection{DNN training with variable splitting}
\label{sc:DNN-variable-split}

Consider $N$-layer feedforward neural networks with $N-1$ hidden layers of the neural networks.
Particularly, let $d_i \in \mathbb{N}$ be the number of hidden units in the $i$-th hidden layer for $i=1,\ldots,N-1$.
Let $d_0$ and $d_N$ be the number of units of input and output layers, respectively.
Let $\bW_i \in \RR^{d_i \times d_{i-1}}$ be the weight matrix between the $(i-1)$-th layer and the $i$-th layer for any $i=1,\ldots N$.\footnote{To  simplify notations, we regard the input and output layers as the $0$-th and $N$-th layers, respectively, and absorb the bias of each layer into $\bW_i$.} Let ${\cal Z}:= \{(\bx_j, \by_j)\}_{j=1}^n {\subset \RR^{d_0} \times \RR^{d_N}}$ be $n$ samples, where $\by_j$'s are the one-hot vectors of labels.
Denote $\calW:=\{\bW_i\}_{i=1}^N$, $\bX:= (\bx_1, \bx_2, \ldots, \bx_n) \in \RR^{d_0 \times n}$ and $\bY:= (\by_1, \by_2, \ldots, \by_n) \in \RR^{d_N \times n}$.
With the help of these notations, the DNN training problem can be formulated as the following empirical risk minimization:
\begin{equation}
\label{Eq:dnn-org}
\min_{\calW} \calR_n(\Phi(\bX;\calW), \bY), 
\end{equation}
where $\calR_n\left( \Phi(\bX;\calW), \bY\right)~:=~\frac{1}{n} \sum_{j=1}^n \ell\left( \Phi(\bx_j;\calW),\by_j\right)$, $\ell: \RR^{d_N} \times \RR^{d_N} \rightarrow \Rp \cup \{0\}$ is some loss function,
$\Phi(\bx_j;\calW) = \sigma_N(\bW_N\sigma_{N-1}(\bW_{N-1}\cdots \bW_2\sigma_1(\bW_1\bx_j))$ is the neural network model with $N$ layers and weights $\calW$ and $\sigma_i$ is the activation function of the $i$-th layer (generally, $\sigma_N \equiv \mathrm{Id}$, i.e., the identity function) and $\calR_n$ is called the empirical risk (also known as the training loss).

Note that the DNN training model \eqref{Eq:dnn-org} is highly nonconvex as the variables are coupled via the deep neural network architecture, which brings many challenges for the design of efficient training algorithms and also its theoretical analysis.
To make Problem \eqref{Eq:dnn-org} more computationally tractable, variable splitting
%\footnote{The lifting trick used in \cite{Zhang-BCD-NIPS2017} can be regarded as a special case of variable splitting scheme.}
is one of the most commonly used ways \citep{Goldstein-ADMM-DNN2016,Zhang-BCD-NIPS2017,Askari-BCD2018,Gu-BCD2018,Lau2018}.
The main idea of variable splitting is to transform a complicated problem (where the variables are coupled highly nonlinearly) into a relatively simpler one (where the variables are coupled much looser) via introducing some additional variables.

\subsubsection{Two-splitting formulation.}

Considering general deep neural network architectures, the DNN training problem can be naturally formulated as the following model (called \textbf{two-splitting formulation})\footnote{Here we consider the regularized DNN training model. The model reduces to the original DNN training model \eqref{Eq:dnn-org} without regularization. }:
\begin{align}
\vspace*{-2mm}
\min_{\calW, \calV} \calL_0\left(\calW, \calV\right) &:= \calR_n(\bV_N;\bY) + \sum_{i=1}^N r_i(\bW_i) + \sum_{i=1}^N s_i(\bV_i) \nonumber\\
\text{subject to} \quad \bV_i &= \sigma_i(\bW_i\bV_{i-1}), \quad i=1,\ldots, N,
\label{Eq:dnn-admm}
\end{align}
where $\calR_n(\bV_N; \bY):= \frac{1}{n} \sum_{j=1}^n \ell\left( (\bV_N)_{:j}, \by_j\right) $ denotes the empirical risk, $\calV:=\{\bV_i\}_{i=1}^N$,
$(\bV_N)_{:j}$ is the $j$-th column of $\bV_N$.
%$f: \RR^{d_N} \times \RR^{d_N} \rightarrow \RR_{+} \cup \{0\}$ is some nonnegative, continuous loss function (often convex or possibly nonconvex),
In addition, $r_i$ and $s_i$ are extended-real-valued, nonnegative functions revealing the priors of the weight variable $\bW_i$ and the state variable $\bV_i$ (or the constraints on $\bW_i$ and $\bV_i$) for each $i=1,\ldots N$, and define
$\bV_0:= \bX$.
%The {two-splitting formulation} of DNN training problem has been widely used in deep learning literature  \citep[e.g.,][]{Carreira2014-MAC,Zhang-BCD-NIPS2017}.
In order to solve the {two-splitting formulation} \eqref{Eq:dnn-admm}, the following alternative minimization problem was suggested in the literature: %\citet{Carreira2014-MAC}:
\begin{equation}
\label{Eq:2-split-min}
\min_{\calW, \calV} \calL\left(\calW, \calV\right) := \calL_0 \left(\calW, \calV\right) + \frac{\gamma}{2}  \sum_{i=1}^N \|\bV_i - \sigma_i(\bW_i \bV_{i-1})\|_F^2,
\end{equation}
where $\gamma>0$ is a hyperparameter\footnote{In \eqref{Eq:3-split-min}, we use a uniform hyperparameter $\gamma$ for the sum of all quadratic terms for the simplicity of notation. In practice, $\gamma$ can be different for each quadratic term and our proof still goes through.}.

The DNN training model \eqref{Eq:dnn-admm} can be very general, where:
(a) $\ell$ can be the squared, logistic, hinge, cross-entropy or other commonly used loss functions;
(b) $\sigma_i$ can be ReLU, leaky ReLU, sigmoid, linear, polynomial, softplus or other commonly used activation functions;
(c) $r_i$ can be the squared $\ell_2$ norm, the $\ell_1$ norm, the elastic net \citep{Zou-elastic-net-2005}, the indicator function of some nonempty closed convex set\footnote{The indicator function $\iota_\calC$ of a nonempty convex set $\calC$ is defined as $\iota_\calC(x) = 0$ if $x\in\calC$ and $+\infty$ otherwise. } (such as the nonnegative closed half space or a closed interval $[0,1]$);
(d) $s_i$ can be the $\ell_1$ norm \citep{Ji-SparDBN2014}, the indicator function of some convex set with simple projection \citep{Zhang-BCD-NIPS2017}.
Particularly, if there is no regularizer or constraint on $\bW_i$ (or $\bV_i$), then $r_i$ (or $s_i$) can be zero.

The network architectures considered in this paper exhibit generality to various types of DNNs, including but not limited to the fully (or sparse) connected MLPs \citep{Rosenblatt-MLP-1961}, convolutional neural networks \citep[CNNs;][]{Fukushima1980,LeCun-CNN1998} and residual neural networks \citep[ResNets;][]{He-ResNet-2016}. For CNNs, the weight matrix $\bW_i$ is sparse and shares some symmetry structures represented as permutation invariants, which are linear constraints and up to a linear reparameterization, so all the main results below are still valid.

Various existing BCD algorithms for DNN training \citep{Carreira2014-MAC,Zhang-BCD-NIPS2017,Askari-BCD2018,Gu-BCD2018} can be regarded as special cases in terms of the use of the two-splitting formulation \eqref{Eq:dnn-admm}.
In fact, \citet{Carreira2014-MAC} considered a specific DNN training model with squared loss and sigmoid activation function, and proposed the method of auxiliary coordinate (MAC) based on the two-splitting formulation of DNN training \eqref{Eq:dnn-admm}, as a two-block BCD method with the weight variables $\calW$ as one block and the state variables $\calV$ as the other. For each block, a nonlinear least squares problem is solved by some iterative methods. Furthermore, \citet{Zhang-BCD-NIPS2017} proposed a BCD type method for DNN training with ReLU and squared loss. To avoid the computational hurdle imposed by ReLU, the DNN training model was relaxed to a smooth multiconvex formulation via lifting ReLU into a higher dimensional space \citep{Zhang-BCD-NIPS2017}.
Such a relaxed BCD is in fact a special case of {two-splitting formulation} \eqref{Eq:2-split-min} with $\sigma_i \equiv \mathrm{Id}, r_i \equiv 0, s_i (\bV_i) = \iota_{\cal X}(\bV_i)$, $i=1,\ldots,N$, where ${\cal X}$ is the nonnegative closed half-space with the same dimension of $\bV_i$,
while \citet{Askari-BCD2018} and \citet{Gu-BCD2018} extended such lifting trick
%suggested in \citep{Zhang-BCD-NIPS2017}
to more general DNN training settings,
of which the activation function can be not only ReLU, but also sigmoid and leaky ReLU.
The general formulations studied in these two papers are also special cases of the {two-splitting formulation} with different $\sigma_i, r_i$ and $s_i$ for $i=1,\ldots,N$.

\subsubsection{Three-splitting formulation.}
Note that the variables $\bW_i$ and $\bV_{i-1}$ are coupled by the nonlinear activation function in the $i$-th constraint of the \text{two-splitting formulation} \eqref{Eq:dnn-admm},
which may bring some difficulties and challenges for solving problem \eqref{Eq:dnn-admm} efficiently,
particularly, when the activation function is ReLU.
Instead,
the following \textbf{three-splitting formulation} was used in \citet{Goldstein-ADMM-DNN2016,Lau2018}:
\begin{multline}
\label{Eq:dnn-admm1}
\min_{\calW, \calV, \calU}  \calL_0\left( \calW, \calV \right) \quad \text{subject to}\\ \; \bU_i = \bW_i \bV_{i-1}, \; \bV_i = \sigma_i(\bU_i), \quad i=1,\ldots,N,
\end{multline}
where $\calU:= \{\bU_i\}_{i=1}^N$. From \eqref{Eq:dnn-admm1}, the variables are coupled much more loosely, particularly for variables $\bW_i$ and $\bV_{i-1}$.
As described later, such a \text{three-splitting formulation} can be beneficial to designing some more efficient methods,
though $N$ extra auxiliary variables $\bU_i$'s are introduced. Similarly, the following alternative unconstrained problem
was suggested in the literature:
\begin{multline}
\hspace{-4mm}\min_{\calW, \calV, \calU} \bcL\left(\calW, \calV, \calU\right) := \calL_0\left(\calW, \calV \right) \\ +\frac{\gamma}{2}\sum_{i=1}^N \left[ \|\bV_i - \sigma_i(\bU_i)\|_F^2 + \|\bU_i - \bW_i \bV_{i-1}\|_F^2 \right]. \label{Eq:3-split-min}
\end{multline}

\subsection{Description of BCD algorithms}
\label{sc:BCD-description}

In the following, we describe how to adapt the BCD method to Problems \eqref{Eq:2-split-min} and \eqref{Eq:3-split-min}.
The main idea of the BCD method of Gauss-Seidel type for a minimization problem with multi-block variables
is to update all the variables cyclically while fixing the remaining blocks at their last updated values \citep{Xu-Yin-BCD2013}.
In this paper, we consider the BCD method with the backward order (but not limited to this as discussed later) for the updates of variables,
i.e., the variables are updated from the output layer to the input layer,
and for each layer, we update the variables $\{\bV_i, \bW_i\}$ cyclically for Problem \eqref{Eq:2-split-min} as well as the variables $\{\bV_i, \bU_i, \bW_i\}$ cyclically for Problem \eqref{Eq:3-split-min}.
Since $\sigma_N \equiv \mathrm{Id}$, the output layer is paid special attention.
Particularly, for most blocks, we adopt the proximal update strategies for two major reasons: (1) To practically stabilize the training process; (2) To yield the desired ``sufficient descent'' property for theoretical justification.
For each subproblem, we assume that its minimizer can be achieved.
The BCD algorithms for Problems \eqref{Eq:2-split-min} and \eqref{Eq:3-split-min} can be summarized in \Cref{alg:BCD-2-split,alg:BCD-3-split}, respectively.

\begin{algorithm}[h]
	{\footnotesize
		\begin{algorithmic}\caption{Two-splitting BCD for DNN Training
\eqref{Eq:2-split-min}}\label{alg:BCD-2-split}
			\STATE {\bf Data}: $\bX \in \RR^{d_0 \times n}$, $\bY \in \RR^{d_N \times n}$
			\STATE {\bf Initialization}: $\{\bW_i^0, \bV_i^0\}_{i=1}^N$, $\bV_0^k \equiv \bV_0 := \bX$
			\STATE {\bf Parameters:} $\gamma>0$, $\alpha>0$ \footnotemark
			\smallskip
			\FOR{$k=1,\ldots$}
			\STATE $\bV_N^k = \argmin_{\bV_N} \ \{ s_N(\bV_N) + {\cal R}_n(\bV_N; \bY) +  \frac{\gamma}{2} \|\bV_N - \bW_N^{k-1}\bV_{N-1}^{k-1}\|_F^2+ \frac{\alpha}{2} \|\bV_N - \bV_N^{k-1}\|_F^2\} $
			\STATE $\bW_N^k =\argmin_{\bW_N} \ \{ r_N(\bW_N)+\frac{\gamma}{2} \|\bV_N^k - \bW_N\bV_{N-1}^{k-1}\|_F^2+ \frac{\alpha}{2}\|\bW_N - \bW_N^{k-1}\|_F^2\}$
			\FOR{$i= N-1,\ldots, 1$}
			\STATE $\bV_i^k =\argmin_{\bV_i} \ \{s_i(\bV_i) + \frac{\gamma}{2} \|\bV_i - \sigma_i(\bW_i^{k-1}\bV_{i-1}^{k-1})\|_F^2 + \frac{\gamma}{2}\|\bV_{i+1}^k - \sigma_{i+1}(\bW_{i+1}^k \bV_i)\|_F^2 + \frac{\alpha}{2} \|\bV_i - \bV_i^{k-1}\|_F^2\}$
			\STATE $\bW_i^k =\argmin_{\bW_i} \{r_i(\bW_i) + \frac{\gamma}{2} \|\bV_i^k - \sigma_i(\bW_i\bV_{i-1}^{k-1})\|_F^2 + \frac{\alpha}{2} \|\bW_i - \bW_i^{k-1}\|_F^2 \}$
			\ENDFOR
			%\STATE $k \leftarrow k+1$
			\ENDFOR
	\end{algorithmic}}
\end{algorithm}
\footnotetext{In practice, $\gamma$ and $\alpha$ can vary among blocks and our proof still goes through.}

\begin{algorithm}[h]
	{\small
		\begin{algorithmic}\caption{Three-splitting BCD for DNN training
 \eqref{Eq:3-split-min}}\label{alg:BCD-3-split}
			\STATE {\bf Samples}: $\bX \in \RR^{d_0 \times n}$, $\bY \in \RR^{d_N \times n}$
			\STATE {\bf Initialization}: $\{\bW_i^0, \bV_i^0, \bU_i^0\}_{i=1}^N$, $\bV_0^k \equiv \bV_0 := \bX$
			\STATE {\bf Parameters:} $\gamma>0$, $\alpha>0$
			\smallskip
			\FOR{$k=1,\ldots$}
			\STATE $\bV_N^k = \argmin_{\bV_N} \ \{s_N(\bV_N) + {\cal R}_n(\bV_N; \bY) +  \frac{\gamma}{2} \|\bV_N - \bU_N^{k-1}\|_F^2 + \frac{\alpha}{2} \|\bV_N - \bV_N^{k-1}\|_F^2\}$
			\STATE $\bU_N^k = \argmin_{\bU_N} \ \{\frac{\gamma}{2}\|\bV_N^k - \bU_N\|_F^2 + \frac{\gamma}{2}\|\bU_N - \bW_N^{k-1}\bV_{N-1}^{k-1}\|_F^2\}$
			\STATE $\bW_N^k = \argmin_{\bW_N} \ \{ r_N(\bW_N)+\frac{\gamma}{2} \|\bU_N^k - \bW_N\bV_{N-1}^{k-1}\|_F^2+ \frac{\alpha}{2}\|\bW_N - \bW_N^{k-1}\|_F^2\}$
			\FOR{$i= N-1,\ldots, 1$}
			\STATE $\bV_i^k = \argmin_{\bV_i} \ \{s_i(\bV_i) + \frac{\gamma}{2} \|\bV_i - \sigma_i(\bU_i^{k-1})\|_F^2 + \frac{\gamma}{2}\|\bU_{i+1}^k - \bW_{i+1}^k \bV_i\|_F^2 \}$
			\STATE $\bU_i^k = \argmin_{\bU_i}\ \{\frac{\gamma}{2}\|\bV_i^k - \sigma_i(\bU_i)\|_F^2 + \frac{\gamma}{2}\|\bU_i - \bW_i^{k-1}\bV_{i-1}^{k-1}\|_F^2 + \frac{\alpha}{2}\|\bU_i -\bU_i^{k-1}\|_F^2\}$
			\STATE $\bW_i^k = \argmin_{\bW_i} \{r_i(\bW_i) + \frac{\gamma}{2} \|\bU_i^k - \bW_i\bV_{i-1}^{k-1}\|_F^2 + \frac{\alpha}{2} \|\bW_i - \bW_i^{k-1}\|_F^2 \}$
			\ENDFOR
			%\STATE $k \leftarrow k+1$
			\ENDFOR
	\end{algorithmic}}
\end{algorithm}

One major merit of \Cref{alg:BCD-3-split} over \Cref{alg:BCD-2-split} is that in each subproblem, almost all updates are simple proximal updates\footnote{For $\bV_N^k$-update, we can regard $s_N(\bV_N) + {\cal R}_n(\bV_N; \bY)$ as a new proximal function $\tilde{s}_N(\bV_N)$. } (or just least squares problems), which usually have closed form solutions to many commonly used DNNs,
while a drawback of \Cref{alg:BCD-3-split} over \Cref{alg:BCD-2-split} is that more storage memory is required due to the introduction of additional variables $\{\bU_i\}_{i=1}^N$.
Some typical examples leading to the closed form solutions include:
(a) $r_i, s_i$ are $0$ (i.e., no regularization), or the squared $\ell_2$ norm (a.k.a. \textit{weight decay}), or the indicator function of a nonempty closed convex set with a simple projection like the nonnegative closed half space and the closed interval $[0,1]$;
(b) the loss function $\ell$ is the squared loss
or hinge loss (see \Cref{Lemm:solution-hinge-min} in \Cref{app:prox});
and (c) $\sigma_i$ is ReLU (see \Cref{Lemm:solution-relu-min} in \Cref{app:proof_lemma_1}), leaky ReLU, or linear link function.
For other cases in which $r_i$ and $s_i$ are the $\ell_1$ norm, $\sigma_i$ is the sigmoid function, and the loss $\ell$ is the logistic function,
the associated subproblems can be also solved cheaply via some efficient existing methods.
Discussions on specific implementations of these BCD methods can be referred to \Cref{sc:BCD-implementation}.

\section{Global convergence analysis of BCD}
\label{sc:BCD-convergence}

In this section, we establish the global convergence of both \Cref{alg:BCD-2-split} for Problem \eqref{Eq:2-split-min}, and  \Cref{alg:BCD-3-split} for Problem \eqref{Eq:3-split-min}, followed by some extensions.

\subsection{Main assumptions}
\label{sc:Assumptions}

First of all, we present our main assumptions, which involve the definitions of \textit{real analytic} and \textit{semialgebraic} functions.
%, which are frequently used in this paper.

Let $h:\RR^p \to \RR \cup \{+\infty\}$ be an extended-real-valued function (respectively, $h:\RR^p \rightrightarrows \RR^q$ be a point-to-set mapping), its \textit{graph} is defined by
\begin{align*}
&\mathrm{Graph}(h) := \{(\bx,y)\in \RR^p \times \RR: y = h(\bx)\}, \\
(\text{resp.}\; &\mathrm{Graph}(h) := \{(\bx,\by)\in \RR^p \times \RR^q: \by \in h(\bx)\}),
\end{align*}
and its domain by $\dom(h):=\{\bx\in \RR^p: h(\bx)<+\infty\}$ (resp. $\dom(h) :=\{\bx\in\RR^p: h(\bx)\neq \varnothing\}$).
When $h$ is a proper function, i.e., when $\dom(h) \neq \varnothing,$ the set of its global minimizers (possibly empty) is denoted by
\[
\argmin h:=\{\bx\in \RR^p: h(\bx) = \inf h\}.
\]

\begin{definition}[Real analytic]
	\label{Def:real-analytic}
	A function $h$ with domain an open set $U\subset \RR$ and range the set of either all real or complex numbers, is said to be \textbf{real analytic} at $u$ if the function $h$ may be represented by a convergent power series on some interval of positive radius centered at $u$, i.e.,
	$h(x) = \sum_{j=0}^{\infty} \alpha_j(x-u)^j,$
	for some $\{\alpha_j\} \subset \RR$.
	The function is said to be \textbf{real analytic} on $V\subset U$ if it is real analytic at each $u \in V$
    \citep[Definition 1.1.5]{Krantz2002-real-analytic}.
	The real analytic function $f$ over $\RR^p$ for some positive integer $p>1$ can be defined similarly.
	
According to \citet{Krantz2002-real-analytic}, typical real analytic functions include polynomials, exponential functions, and the logarithm, trigonometric and power functions  on any open set of their domains.
One can verify whether a multivariable real function $h(\bx)$ on $\RR^p$ is analytic by checking the analyticity of $g(t):= h(\bx+ t \by)$ for any $\bx, \by \in \RR^p$.
\end{definition}

\begin{definition}[Semialgebraic]\hfill
	\label{Def:semialgebraic}
	\begin{enumerate}
		\item[(a)]
		A set $\calD \subset \RR^p$ is called semialgebraic \citep{Bochnak-semialgebraic1998} if it can be represented as
		\[
		\calD = \bigcup_{i=1}^s \bigcap_{j=1}^t \left\lbrace \bx\in \RR^p: P_{ij}(\bx) = 0, Q_{ij}(\bx)>0 \right\rbrace ,
		\]
		where $P_{ij}, Q_{ij}$ are real polynomial functions for $1\leq i \leq s, 1\leq j \leq t.$
		
		\item[(b)]
		A function $h:\RR^p\rightarrow \RR\cup \{+\infty\}$ (resp. a point-to-set mapping $h:\RR^p \rightrightarrows \RR^q$) is called \textit{semialgebraic} if its graph $\mathrm{Graph}(h)$ is semialgebraic.
	\end{enumerate}

\end{definition}

According to \citet{Lojasiewicz1965-semianalytic,Bochnak-semialgebraic1998} and \citet[I.2.9, page 52]{Shiota1997-subanalytic}, the class of semialgebraic sets are stable under the operation of finite union, finite intersection, Cartesian product or complementation. Some typical examples include \text{polynomial} functions, the indicator function of a semialgebraic set, and the \text{Euclidean norm}  \citep[page 26]{Bochnak-semialgebraic1998}.

\begin{assumption}
\label{Assumption-model}
Suppose that
\begin{enumerate}
\item[(a)]
the loss function $\ell$ is a proper lower semicontinuous\footnote{A function $f: {\cal X} \rightarrow \mathbb{R}$ is called \textit{lower semicontinuous} if $\mathop{\lim\inf}_{x \rightarrow x_0}\ f(x) \geq f(x_0)$ for any $x_0 \in {\cal X}$. } and nonnegative function,

\item[(b)]
the activation functions $\sigma_i$ ($i=1\dots, N-1$) are Lipschitz continuous on any bounded set,

\item[(c)]
the regularizers $r_i$ and $s_i$ ($i=1,\ldots,N$) are nonegative lower semicontinuous convex functions, and

\item[(d)]
all these functions $\ell$, $\sigma_i$, $r_i$ and $s_i$ ($i=1,\ldots,N$) are either real analytic or semialgebraic, and continuous on their domains.
\end{enumerate}
\end{assumption}

According to \citet{Krantz2002-real-analytic,Lojasiewicz1965-semianalytic,Bochnak-semialgebraic1998} and \citet[I.2.9, page 52]{Shiota1997-subanalytic},
most of the commonly used DNN training models \eqref{Eq:dnn-admm} can be verified to satisfy \Cref{Assumption-model} as shown in the following proposition,
the proof of which is provided in \Cref{app:proof-proposition1}.

\begin{proposition}
\label{Proposition:special-cases}
Examples satisfying \Cref{Assumption-model} include:
\begin{enumerate}
\item[(a)] $\ell$ is the squared, logistic, hinge, or cross-entropy losses;
\item[(b)] $\sigma_i$ is ReLU, leaky ReLU, sigmoid, hyperbolic tangent, linear, polynomial, or softplus activations;
\item[(c)] $r_i$ and $s_i$ are the squared $\ell_2$ norm, the $\ell_1$ norm, the elastic net, the indicator function of some nonempty closed convex set (such as the nonnegative closed half space, box set or a closed interval $[0,1]$),
%\item[(d)] $s_i$ is the $\ell_1$ norm, the indicator function of some convex set with simple projection,
or $0$ if no regularization.
\end{enumerate}
\end{proposition}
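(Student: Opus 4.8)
The plan is to verify each listed example directly against the relevant clauses of \Cref{Assumption-model}, organizing the argument by the three categories (losses, activations, regularizers) and invoking the closure properties of real analytic and semialgebraic functions recalled after \Cref{Def:real-analytic,Def:semialgebraic}. Three unifying observations do most of the work: (i) every polynomial is simultaneously real analytic and semialgebraic, and is continuous (hence lower semicontinuous) on all of its domain; (ii) a function that is continuously differentiable on $\RR$ with derivative bounded on every bounded set is Lipschitz continuous on that set by the mean value theorem; and (iii) a piecewise polynomial function has a semialgebraic graph, since the graph can be written as a finite union of intersections of polynomial equalities and inequalities, precisely the form required by \Cref{Def:semialgebraic}(a).

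For the losses in (a), I would treat the squared loss as a polynomial, so by (i) it is real analytic, semialgebraic, continuous, nonnegative, and proper. The logistic and cross-entropy losses are compositions of the exponential, the logarithm, and affine maps; since each ingredient is real analytic on its (open) domain and the argument of the logarithm stays strictly positive, the composition is real analytic and continuous on its domain, and it is nonnegative and proper. The hinge loss $\max\{0,1-yu\}$ has a kink and so is not smooth, but it is piecewise affine, so (iii) yields semialgebraicity; it is plainly continuous, nonnegative, and proper. This settles \Cref{Assumption-model}(a) and the loss part of (d).

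For the activations in (b), the linear and polynomial cases are polynomials, hence real analytic, semialgebraic, and Lipschitz on every bounded set by (ii). ReLU and leaky ReLU are piecewise affine, so semialgebraic by (iii) and globally $1$-Lipschitz. The sigmoid $1/(1+e^{-x})$ and $\tanh$ are real analytic because their denominators never vanish on $\RR$, and their derivatives are bounded on all of $\RR$, so they are globally Lipschitz and in particular Lipschitz on any bounded set; this discharges \Cref{Assumption-model}(b) and the activation part of (d). For the regularizers in (c), the squared $\ell_2$ norm is again a polynomial, so convex, nonnegative, lower semicontinuous, real analytic, and semialgebraic; the elementwise $\ell_1$ norm is a finite sum of copies of $t\mapsto|t|$, whose graph $\{(t,s):s\geq 0,\ s^2=t^2\}$ is semialgebraic, and it is convex, nonnegative, and continuous, so by closure of semialgebraicity under finite sums the $\ell_1$ norm and the elastic net (a sum of the two) inherit all these properties. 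For the indicator $\iota_\calC$, convexity and lower semicontinuity come from $\calC$ being nonempty closed convex, nonnegativity is immediate, $\iota_\calC$ vanishes identically on its domain $\calC$ and so is continuous there, and semialgebraicity holds because each listed set (nonnegative half space, box, interval $[0,1]$) is cut out by finitely many polynomial inequalities; the zero function trivially satisfies everything.

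The bulk of this is routine, and the main obstacles are the nonsmooth and the transcendental cases rather than the polynomial ones. Concretely, the delicate steps are exhibiting explicit semialgebraic graphs for the piecewise-defined functions (hinge, ReLU, $\ell_1$, and the indicators) so that observation (iii) genuinely applies, and handling the domain and lower-semicontinuity bookkeeping for the logistic and cross-entropy losses, where one must confirm the argument of the logarithm remains positive and that continuity is only claimed on the domain. These are the places where I would spend care; the Lipschitz and analyticity claims for sigmoid and $\tanh$ follow from boundedness of their derivatives and are comparatively immediate.
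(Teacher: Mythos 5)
Your proposal is correct and follows essentially the same route as the paper: verify each category (losses, activations, regularizers) against the assumption, using polynomiality or closure of real analytic functions under sums and compositions for the smooth cases, and explicit semialgebraic descriptions of the graphs for the piecewise-defined cases (hinge, ReLU, leaky ReLU, absolute value, indicators of polyhedral sets). Your justification of the bounded-set Lipschitz property via bounded derivatives and piecewise-affinity is in fact more careful than the paper's one-line remark that continuity on the domain suffices, and your direct observation that the squared $\ell_2$ norm is a polynomial slightly streamlines the paper's composition argument.
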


\subsection{Main theorem}
\label{sc:main-convergence-results}

Under Assumption \ref{Assumption-model}, we state our main theorem as follows.

\begin{theorem}
%[Global convergence and rate]
\label{Thm:BCD-ConvThm1}
Let $\{\calQ^k:=\left(\{\bW_i^k\}_{i=1}^N, \{\bV_i^k\}_{i=1}^N\right)\}_{k\in\NN}$ and $\{\calP^k:=\left(\{\bW_i^k\}_{i=1}^N, \{\bV_i^k\}_{i=1}^N, \{\bU_i^k\}_{i=1}^N\right)\}_{k\in\NN}$ be the sequences generated by \Cref{alg:BCD-2-split,alg:BCD-3-split}, respectively.
Suppose that Assumption \ref{Assumption-model} holds,
and that one of the following conditions holds: (i) there exists a convergent subsequence $\{\calQ^{k_j}\}_{j\in \NN}$ (resp. $\{\calP^{k_j}\}_{j\in\NN}$); (ii) $r_i$ is coercive\footnote{An extended-real-valued function $h: \RR^p \rightarrow \RR\cup \{+\infty\}$ is called coercive if and only if $h(\bx) \rightarrow +\infty$ as $\|\bx\| \rightarrow +\infty$. } for any $i=1,\ldots, N$; (iii) $\cal L$ (resp. $\bcL$) is coercive.
Then for any $\alpha>0$, $\gamma>0$ and any finite initialization $\calQ^0$ (resp. $\calP^0$), the following hold
\begin{enumerate}[label=(\alph*)]

\item
$\{\calL(\calQ^k)\}_{k\in\NN}$ (resp. $\{\bcL(\calP^k)\}_{k\in\NN}$) converges to some ${\calL}^*$ (resp. $\bcL^*$).

\item
 $\{\calQ^k\}_{k\in\NN}$ (resp. $\{\calP^k\}_{k\in\NN}$) converges to a critical point of $\calL$ (resp. $\bcL$).

\item
$\frac{1}{K}\sum_{k=1}^K \|\bg^k\|_F^2 \to 0$ at the rate $\calO(1/K)$ where $\bg^k \in \partial \calL(\calQ^k)$.
Similarly, $\frac{1}{K}\sum_{k=1}^K \|{\bar{\bg}}^k\|_F^2 \to 0$ at the rate $\calO(1/K)$ where ${\bar{\bg}}^k \in \partial {\bcL}(\calP^k)$.
\end{enumerate}
\end{theorem}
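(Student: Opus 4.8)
The plan is to invoke the abstract convergence meta-theorem of the Kurdyka--{\L}ojasiewicz framework \citep[Theorem 2.9]{Attouch2013}, which asserts that a sequence converges to a critical point whenever it enjoys a \textbf{sufficient descent} property, a \textbf{relative error} (subgradient) bound, a \textbf{continuity} property, and the objective is a \KL function. Since \Cref{alg:BCD-2-split,alg:BCD-3-split} are structurally parallel, I would carry out the argument in detail for the two-splitting sequence $\{\calQ^k\}$ and note that $\{\calP^k\}$ follows verbatim with the additional $\bU_i$-blocks. First I would record a uniform boundedness fact: each of conditions (i), (ii), (iii) confines the trajectory to a bounded set. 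Indeed, the sufficient-descent property below makes $\{\calL(\calQ^k)\}$ nonincreasing and $\calL\geq 0$, so under coercivity of $\calL$ (resp.\ of the $r_i$, which bounds the $\bW_i^k$ and then the $\bV_i^k$ inductively via the penalties and the continuity of $\sigma_i$) the iterates stay in a bounded sublevel set, while (i) supplies boundedness directly. Fix such a bounded set $\calB$ containing the whole trajectory.

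Step one is \textbf{sufficient descent}. Every block is updated by minimizing its subproblem augmented with a proximal term $\tfrac{\alpha}{2}\|\cdot-(\cdot)^{k-1}\|_F^2$; comparing the minimizer against the previous iterate shows each single block update decreases $\calL$ by at least $\tfrac{\alpha}{2}$ times its squared increment. Summing over the block updates in one sweep yields \Cref{Lemm:BCD-suff-desc}, namely
\[
\calL(\calQ^{k-1}) - \calL(\calQ^{k}) \;\geq\; a\,\|\calQ^{k}-\calQ^{k-1}\|_F^2
\]
for some $a>0$, so telescoping gives $\sum_k \|\calQ^{k}-\calQ^{k-1}\|_F^2<\infty$ and successive increments vanish.

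Step two, the \textbf{relative error} bound, is where I expect the main difficulty. Writing out the first-order optimality condition of each proximal subproblem produces, for every block, an inclusion $0\in\partial(\text{that block's function})+(\text{proximal and coupling terms})$. Assembling these into a single $\bg^k\in\partial\calL(\calQ^k)$ forces one to correct for the Gauss--Seidel lag, since each subproblem uses some already-updated and some stale neighbouring blocks; the resulting error terms must be controlled by the Lipschitz continuity of the activations \emph{on the bounded set} $\calB$ (\Cref{Assumption-model}(b)) together with the boundedness of the factors $\bV_{i-1}$ and $\bW_i$. This is precisely where the global Lipschitz-differentiability hypothesis of \citet{Xu-Yin-BCD2013} is dispensed with, and the delicate bookkeeping of these coupling terms --- especially through the compositions $\sigma_i(\bW_i\bV_{i-1})$ in the two-splitting case --- is the main obstacle. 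The upshot is \Cref{Lemm:BCD-grad-bound}: there is $b>0$ with $\|\bg^{k}\|_F \leq b\,\|\calQ^{k}-\calQ^{k-1}\|_F$.

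The remaining ingredients are comparatively routine. For \textbf{continuity} I would use the lower semicontinuity of $\ell,r_i,s_i$ and the continuity of $\sigma_i$ on their domains (\Cref{Assumption-model}(a),(c),(d)) to guarantee $\calL(\calQ^{k_j})\to\calL(\calQ^\ast)$ along any subsequence converging to a limit point $\calQ^\ast$. For the \textbf{\KL property}, \Cref{Assumption-model}(d) makes every constituent real analytic or semialgebraic; since finite sums, products and compositions preserve this structure, $\calL$ is a \KL function by \citet{Lojasiewicz-KL1993,Kurdyka-KL1998,Bochnak-semialgebraic1998}. Invoking \citep[Theorem 2.9]{Attouch2013} then yields convergence of the whole sequence to a critical point, which is part (b); part (a) follows since $\calL(\calQ^k)$ is monotone and bounded below by $0$. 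Finally, for the rate in part (c), I combine the two key lemmas: summing sufficient descent gives $\sum_{k=1}^{K}\|\calQ^k-\calQ^{k-1}\|_F^2\leq a^{-1}(\calL(\calQ^0)-\calL^\ast)$, and the relative-error bound gives $\|\bg^k\|_F^2\leq b^2\|\calQ^k-\calQ^{k-1}\|_F^2$, so that
\[
\frac{1}{K}\sum_{k=1}^K\|\bg^k\|_F^2 \;\leq\; \frac{b^2}{aK}\bigl(\calL(\calQ^0)-\calL^\ast\bigr) \;=\; \calO(1/K).
\]
The identical argument applied to $\bcL$ and $\{\calP^k\}$ establishes the three-splitting statements.
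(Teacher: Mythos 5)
Your proposal follows essentially the same route as the paper: the same four ingredients of the \citet[Theorem 2.9]{Attouch2013} framework (sufficient descent via the proximal or strongly convex subproblems, relative error via the blockwise optimality conditions plus Lipschitz continuity of the activations on a bounded set, continuity, and the \KL property), boundedness of the trajectory derived from each of conditions (i)--(iii), and the $\calO(1/K)$ rate obtained by combining the two key lemmas. The only point you gloss over is that the class ``real analytic or semialgebraic'' is not closed under sums; the paper handles this by passing to subanalytic functions (both classes are subanalytic, and sums of nonnegative subanalytic functions are subanalytic) before invoking the result of \citet{Bolte-KL2007a} that continuous subanalytic functions are \KL functions.
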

Note that the DNN training problems \eqref{Eq:2-split-min} and \eqref{Eq:3-split-min} in this paper generally do not satisfy such a \textit{Lispchitz differentiable} property, particularly, when ReLU activation is used. Compared to the existing literature, this theorem establishes the global convergence without the \textit{block multiconvexity} and \textit{Lipschitz differentiability} assumptions used in \citet{Xu-Yin-BCD2013}, which are often violated by the DNN training problems due to the nonlinearity of the activations.

%Theorem \ref{Thm:BCD-ConvThm1} establishes the global convergence of BCD for most of commonly used DNN training models with arbitrary positive hyperparameters $\alpha$ and $\gamma$, while the recent paper \citep{Davis2018} only establishes the subsequence convergence of SGD for DNN training.

%Moreover, by Theorem \ref{Thm:BCD-ConvThm1}, we also show the $\calO(1/K)$ rate of convergence to a critical point without the Lipschtitz differentiability assumption. In contrast, the $\calO(1/\sqrt{K})$ and $\calO(\log K/\sqrt{K})$ rates of convergence are established  for SGD in \citet[Corollary 2.2]{Lan-SGD2013} and  Adam-type methods in \citet[Theorem 1]{Chen-Adam2018}, respectively, under the Lipschitz differentiable assumption.
%To the best of our knowledge, it leaves open to establish  similar convergence rate of stochastic BCD in DNN training settings.

\subsection{Extensions}
\label{sc:extensions}

We extend the established convergence results to the BCD methods for general losses with the prox-linear strategy, and the BCD methods for training ResNets.

\subsubsection{Extension to prox-linear}
\label{sc:BCD-prox-linear}

Note that in the $\bV_N$-update of both  \Cref{alg:BCD-2-split,alg:BCD-3-split}, the empirical risk is involved in the optimization problems. It is generally hard to obtain its closed-form solution except for some special cases such as the case that the loss is the square loss. For other smooth losses such as the logistic, cross-entropy, and exponential losses, we suggest using the following \textit{prox-linear} update strategies, that is, for some parameter $\alpha>0$, the $\bV_N$-update in \Cref{alg:BCD-2-split} is
	\begin{multline}
	\label{Eq:VN-alg1}
	\hspace{-5mm}\bV_N^k =
	\argmin_{\bV_N} \left\lbrace s_N(\bV_N) + \langle \nabla \calR_n(\bV_N^{k-1}; \bY), \bV_N - \bV_N^{k-1}\rangle \right. \\\left. + \frac{\alpha}{2} \|\bV_N-\bV_N^{k-1}\|_F^2  + \frac{\gamma}{2}\|\bV_N-\bW_N^{k-1}\bV_{N-1}^{k-1}\|_F^2\right\rbrace,
	\end{multline}
and the $\bV_N$-update in \Cref{alg:BCD-3-split} is
	\begin{multline}
	\label{Eq:VN-alg2}
	\hspace{-5mm}\bV_N^k =
	\argmin_{\bV_N} \left\lbrace s_N(\bV_N) + \langle \nabla \calR_n(\bV_N^{k-1}; \bY), \bV_N - \bV_N^{k-1}\rangle\right. \\\left.  + \frac{\alpha}{2} \|\bV_N - \bV_N^{k-1}\|_F^2 + \frac{\gamma}{2}\|\bV_N - \bU_N^{k-1}\|_F^2\right\rbrace.
	\end{multline}
From \eqref{Eq:VN-alg1} and \eqref{Eq:VN-alg2}, when $s_N$ is zero or its proximal operator can be easily computed, then $\bV_N$-updates for both BCD algorithms can be implemented with explicit expressions.
Therefore, the specific uses of these BCD methods are very flexible, mainly depending on users'  understanding of their own problems.

The claims in \Cref{Thm:BCD-ConvThm1} still hold for the prox-linear updates adopted for the $\bV_N$-updates if the loss is smooth with Lipschitz continuous gradient, as stated in the following \Cref{Coro:globalconv-prox-linear1}.
\begin{theorem}
[Global convergence for prox-linear update]
\label{Coro:globalconv-prox-linear1}
Consider adopting the prox-linear updates \eqref{Eq:VN-alg1}, \eqref{Eq:VN-alg2} for the $\bV_N$-subproblems in \Cref{alg:BCD-2-split,alg:BCD-3-split}, respectively. Under the conditions of \Cref{Thm:BCD-ConvThm1}, if further $\nabla {\cal R}_n$ is Lipschitz continuous with a Lipschitz constant $L_R$ and $\alpha > \max\left\lbrace 0,\frac{L_R-\gamma}{2}\right\rbrace $, then all claims in Theorem \ref{Thm:BCD-ConvThm1} still hold for both algorithms.
\end{theorem}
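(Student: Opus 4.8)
The plan is to observe that passing from \Cref{alg:BCD-2-split,alg:BCD-3-split} to their prox-linear variants \eqref{Eq:VN-alg1}, \eqref{Eq:VN-alg2} alters only the $\bV_N$-update; every other subproblem, as well as the objectives $\calL$ and $\bcL$ themselves, is left untouched. Consequently the \emph{continuity} condition and the \KL property, which depend only on $\calL$ (resp.\ $\bcL$) and not on how the iterates are produced, carry over verbatim from the proof of \Cref{Thm:BCD-ConvThm1}. It therefore suffices to re-establish the \emph{sufficient descent} and \emph{relative error} conditions (the analogues of \Cref{Lemm:BCD-suff-desc} and \Cref{Lemm:BCD-grad-bound}) for the single modified block, and then invoke the same \citet{Attouch2013} machinery.

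For sufficient descent I would argue as follows in the two-splitting case (the three-splitting case is identical with $\bW_N^{k-1}\bV_{N-1}^{k-1}$ replaced by $\bU_N^{k-1}$). Let $\psi_k$ denote the surrogate minimized in \eqref{Eq:VN-alg1}. Since $s_N$ is convex and the two quadratic penalties contribute a combined curvature $\alpha+\gamma$, $\psi_k$ is $(\alpha+\gamma)$-strongly convex, so its minimizer $\bV_N^k$ obeys $\psi_k(\bV_N^{k-1})-\psi_k(\bV_N^k)\ge \tfrac{\alpha+\gamma}{2}\|\bV_N^k-\bV_N^{k-1}\|_F^2$. Combining this with the descent lemma $\calR_n(\bV_N^k)\le \calR_n(\bV_N^{k-1})+\langle \nabla\calR_n(\bV_N^{k-1}),\bV_N^k-\bV_N^{k-1}\rangle+\tfrac{L_R}{2}\|\bV_N^k-\bV_N^{k-1}\|_F^2$ and cancelling the linear term, the change in $\calL$ across the $\bV_N$-step is bounded by $-\tfrac{2\alpha+\gamma-L_R}{2}\|\bV_N^k-\bV_N^{k-1}\|_F^2$. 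This is a genuine decrease exactly when $2\alpha+\gamma>L_R$, i.e.\ $\alpha>\tfrac{L_R-\gamma}{2}$, matching the hypothesis; the remaining blocks descend precisely as before, so the aggregate sufficient-descent inequality of \Cref{Lemm:BCD-suff-desc} is recovered with a possibly smaller but still positive constant.

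For relative error, the first-order optimality condition of \eqref{Eq:VN-alg1} reads $0\in\partial s_N(\bV_N^k)+\nabla\calR_n(\bV_N^{k-1})+\gamma(\bV_N^k-\bW_N^{k-1}\bV_{N-1}^{k-1})+\alpha(\bV_N^k-\bV_N^{k-1})$, whereas a subgradient of $\calL$ in the $\bV_N$-coordinate at $\calQ^k$ uses $\nabla\calR_n(\bV_N^k)$ instead. The discrepancy is exactly $\nabla\calR_n(\bV_N^k)-\nabla\calR_n(\bV_N^{k-1})$, which by the Lipschitz hypothesis is bounded by $L_R\|\bV_N^k-\bV_N^{k-1}\|_F$. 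Hence the $\bV_N$-block of the subgradient $\bg^k$ is still controlled by a multiple of the iterate increments, and the relative-error estimate of \Cref{Lemm:BCD-grad-bound} survives with $L_R$ absorbed into the constant. With these two lemmas re-verified and the continuity and \KL ingredients unchanged, conclusions (a)--(c) of \Cref{Thm:BCD-ConvThm1} follow by repeating its proof.

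The only delicate point, and the step I expect to require the most care, is the sufficient-descent computation: one must exploit the \emph{joint} $(\alpha+\gamma)$-strong convexity of the surrogate rather than the proximal term alone, since relying only on the $\tfrac{\alpha}{2}\|\cdot\|_F^2$ regularizer would yield the weaker, non-matching requirement $\alpha>L_R$. Tracking the curvature of the quadratic coupling term is what produces the sharp threshold $\alpha>\max\{0,(L_R-\gamma)/2\}$ stated in the theorem.
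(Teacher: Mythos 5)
Your proposal is correct and follows essentially the same route as the paper: the paper handles the prox-linear $\bV_N$-step as case (c) of the restated sufficient descent lemma (\Cref{Lemm:BCD-suff-desc1}), exploiting the $(\alpha+\gamma)$-strong convexity of the linearized surrogate together with the $L_R$-descent lemma to obtain the block descent constant $\alpha+\frac{\gamma-L_R}{2}$ (your $\frac{2\alpha+\gamma-L_R}{2}$), and it patches the relative error bound in \Cref{Lemm:BCD-grad-bound1} exactly by the gradient discrepancy $\nabla\calR_n(\bV_N^{k};\bY)-\nabla\calR_n(\bV_N^{k-1};\bY)$, bounded by $L_R\|\bV_N^k-\bV_N^{k-1}\|_F$. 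Your identification of the joint curvature of the proximal and coupling terms as the source of the sharp threshold $\alpha>\max\{0,(L_R-\gamma)/2\}$ is precisely the point of the paper's argument.
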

The proof of \Cref{Coro:globalconv-prox-linear1} is presented in \Cref{sc:proof-convergence-BCD-ResNets}.
It establishes the global convergence of a BCD method for the commonly used DNN training models with nonlinear losses, such as logistic or cross-entropy losses, etc.
Equipped with the prox-linear strategy, all updates of BCD can be implemented easily and allow large-scale distributed computations.

\subsubsection{Extension to ResNet Training}
\label{sc:resnet}

In this section, we first adapt the BCD method to the residual networks \citep[ResNets;][]{He-ResNet-2016}, and then extend the established convergence results of BCD to this case.
Without loss of generality, similar to \eqref{Eq:dnn-admm}, we consider the following simplified ResNets training problem,
\begin{multline}
\label{Eq:dnn-resnet}
\min_{\calW, \calV} \  \calR_n(\bV_N; \bY) + \sum_{i=1}^N r_i(\bW_i) + \sum_{i=1}^N s_i(\bV_i) \\
\text{s.t.} \quad \bV_i - \bV_{i-1} = \sigma_i(\bW_i \bV_{i-1}), \quad i=1,\ldots, N.
\end{multline}
Since the ReLU activation is usually used in ResNets,
we only consider the \textbf{three-splitting formulation} of \eqref{Eq:dnn-resnet}:
\begin{multline*}
%\label{Eq:dnn-admm-resnet}
\min_{\calW, \calV, \calU} \ \calR_n(\bV_N;\bY) + \sum_{i=1}^N r_i(\bW_i) + \sum_{i=1}^N s_i(\bV_i)\\
\text{s.t.} \ \bU_i = \bW_i \bV_{i-1}, \ \bV_i - \bV_{i-1}= \sigma_i(\bU_i), \ i=1,\ldots,N, \nonumber
\end{multline*}
and then adapt BCD to the following minimization problem,
\begin{align}
\label{Eq:QP-BCD-resnet}
\min_{\calW, \calV, \calU} \quad \bcL_{\res}({\cal W},{\cal V},{\cal U}),
\end{align}
where ${\cal W}:=\{\bW_i\}_{i=1}^N$, ${\cal V}:=\{\bV_i\}_{i=1}^N$, ${\cal U}:=\{\bU_i\}_{i=1}^N$ as defined before, and
\begin{align*}
&\bcL_{\res}({\cal W},{\cal V},{\cal U})
:=\calR_n(\bV_N; \bY)+ \sum_{i=1}^N r_i(\bW_i) + \sum_{i=1}^N s_i(\bV_i)\\
&+ \frac{\gamma}{2}\sum_{i=1}^N \left[ \|\bV_i - \bV_{i-1} - \sigma_i(\bU_i)\|_F^2 + \|\bU_i - \bW_i \bV_{i-1}\|_F^2 \right]. \nonumber
\end{align*}
When applied to \eqref{Eq:QP-BCD-resnet}, we use the same update order of \Cref{alg:BCD-3-split} but slightly change the subproblems according to the objective $\bcL_{\res}$ in \eqref{Eq:QP-BCD-resnet}.
The specific BCD algorithm for ResNets is presented in \Cref{alg:BCD-resnet} in \Cref{sc:proof-convergence-BCD-ResNets}.

Similarly, we establish the convergence of BCD for the DNN training model with ResNets \eqref{Eq:QP-BCD-resnet} as follows.
\begin{theorem}[Convergence of BCD for ResNets] \label{Coro:BCD-resnet} \hfill\\
Let $\{\{\bW_i^k, \bV_i^k, \bU_i^k\}_{i=1}^N\}_{k\in\NN}$ be a sequence generated by BCD for the DNN training model with ResNets (i.e., \Cref{alg:BCD-resnet}).
Let assumptions of \Cref{Thm:BCD-ConvThm1} hold.
Then all claims in Theorem \ref{Thm:BCD-ConvThm1} still hold for BCD with ResNets by replacing $\bcL$ with $\bcL_{\res}$.

Moreover, consider adopting the prox-linear update for the $\bV_N$-subproblem in \Cref{alg:BCD-resnet}, then under the assumptions of \Cref{Coro:globalconv-prox-linear1}, all claims of \Cref{Coro:globalconv-prox-linear1} still hold for \Cref{alg:BCD-resnet}.
\end{theorem}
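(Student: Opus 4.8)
The plan is to verify that the ResNet objective $\bcL_{\res}$ and the associated BCD algorithm (\Cref{alg:BCD-resnet}) satisfy exactly the same four ingredients—\emph{sufficient descent}, \emph{relative error}, \emph{continuity}, and the \KL property—that drive the proof of \Cref{Thm:BCD-ConvThm1}, so that the abstract machinery of \citet[Theorem 2.9]{Attouch2013} applies with $\bcL$ replaced by $\bcL_{\res}$. The only structural difference between $\bcL_{\res}$ and $\bcL$ is that each coupling term $\frac{\gamma}{2}\|\bV_i - \sigma_i(\bU_i)\|_F^2$ is replaced by the shifted term $\frac{\gamma}{2}\|\bV_i - \bV_{i-1} - \sigma_i(\bU_i)\|_F^2$, where the map $(\bV_i,\bV_{i-1}) \mapsto \bV_i - \bV_{i-1}$ is linear. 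My strategy is therefore to track how this linear shift propagates through each estimate and to argue that it changes only constants, not the qualitative structure. The \KL property comes for free: $\bcL_{\res}$ is built from the same blocks as $\bcL$ (the loss $\calR_n$, the activations $\sigma_i$, and the regularizers $r_i,s_i$, all real analytic or semialgebraic by \Cref{Assumption-model}(d)) via finite sums and compositions with the linear shift maps, and since both classes are stable under these operations, $\bcL_{\res}$ is itself real analytic or semialgebraic and hence a \KL function.

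Next I would re-derive the \emph{sufficient descent} inequality, the analogue of \Cref{Lemm:BCD-suff-desc}. The proximal terms on the $\bU_i$- and $\bW_i$-blocks are untouched, so their per-block descent is identical. The genuinely new feature is the $\bV_i$-subproblem for $i<N$: in the residual formulation $\bV_i$ appears both in its own coupling term $\frac{\gamma}{2}\|\bV_i - \bV_{i-1}^{k-1} - \sigma_i(\bU_i^{k-1})\|_F^2$ and in the $(i+1)$-th coupling term $\frac{\gamma}{2}\|\bV_{i+1}^k - \bV_i - \sigma_{i+1}(\bU_{i+1}^k)\|_F^2$, so the subproblem objective is at least $2\gamma$-strongly convex in $\bV_i$. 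This alone yields the required per-block descent for $\bV_i$ without any extra proximal regularization, just as the $\gamma$-strong convexity of the single coupling term does in \Cref{Thm:BCD-ConvThm1}. Summing the per-block descents over one sweep gives sufficient descent for $\bcL_{\res}$, and—combined with the boundedness of iterates supplied by the assumed conditions (i)--(iii) of \Cref{Thm:BCD-ConvThm1} applied to $\bcL_{\res}$—confines the whole sequence to a bounded set on which every $\sigma_i$ is Lipschitz by \Cref{Assumption-model}(b).

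For the \emph{relative error} bound, the analogue of \Cref{Lemm:BCD-grad-bound}, I would assemble a subgradient $\bar{\bg}^k \in \partial \bcL_{\res}$ from the first-order optimality conditions of the subproblems. The residual shift only adds the linear terms $\pm\bV_{i-1}$ (respectively $\pm\bV_i$) to the gradients of the coupling terms; on the bounded set containing the iterates these linear contributions are Lipschitz with a bounded constant, so the same argument as in the original lemma bounds $\|\bar{\bg}^k\|_F$ by a constant multiple of the displacement $\|\calP^k - \calP^{k-1}\|_F$. The \emph{continuity} condition follows from the lower semicontinuity of $\ell$, $\sigma_i$, $r_i$, $s_i$ exactly as before. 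With all four ingredients established, invoking \citet[Theorem 2.9]{Attouch2013} delivers claims (a)--(c) verbatim with $\bcL$ replaced by $\bcL_{\res}$.

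Finally, the prox-linear extension is obtained by repeating the argument of \Cref{Coro:globalconv-prox-linear1}: the residual modification leaves the dependence of the $\bV_N$-subproblem on $\calR_n$ unchanged, so the linearization error is still controlled by the Lipschitz constant $L_R$ of $\nabla\calR_n$, and the condition $\alpha > \max\{0,(L_R-\gamma)/2\}$ restores sufficient descent for the $\bV_N$-block, after which the preceding analysis applies without change. I expect the only delicate point to be the bookkeeping in the $\bV_i$-subproblem, where the additional $(i+1)$-th coupling term must be carried through both the descent and the subgradient estimates; but since every newly introduced term is linear in the block variables, this is routine rather than a substantive new obstacle.
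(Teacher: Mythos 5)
Your proposal is correct and follows essentially the same route as the paper: it establishes the ResNet analogues of the sufficient descent and relative error lemmas (the paper's \Cref{Lemm:BCD-suff-desc-resnet} and \Cref{Lemm:BCD-grad-bound-resnet}), verifies the \KL and continuity conditions for $\bcL_{\res}$ by the same stability arguments, and then invokes \citet[Theorem 2.9]{Attouch2013}, with the prox-linear case handled exactly as in \Cref{Coro:globalconv-prox-linear1}. The one bookkeeping point you flag — the extra $\gamma(\bV_{i-1}^k - \bV_{i-1}^{k-1})\odot \bm{\xi}_i^k$ term with $\bm{\xi}_i^k \in \partial\sigma_i(\bU_i^k)$ appearing in the $\partial_{\bU_i}\bcL_{\res}$ estimate — is indeed the only new ingredient, and it is controlled by the boundedness of the iterates and the Lipschitz constant of $\sigma_i$, just as you anticipate.
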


The proof of this theorem is presented in \Cref{sc:proof-convergence-BCD-ResNets}.
ResNets is one of the most popular network architectures used in the deep learning community and has profound applications in computer vision.
How to efficiently train ResNets is thus very important, especially since it is not of a fully-connected structure.
This theorem, for the first time, shows that the BCD method might be a good candidate for the training of ResNets with global convergence guarantee.

\section{Keystones and discussions}
\label{sc:proof-ideas}
In this section, we present the keystones of our proofs followed by some discussions.

\subsection{Main ideas of proofs}
Our proofs follow the analysis framework formulated in \citet{Attouch2013}, where the establishments of the \textit{sufficient descent} and \textit{relative error} conditions and the verifications of the \textit{continuity} condition and \textit{\KL} property of the objective function are the four key ingredients.
In order to establish the \textit{sufficient descent} and \textit{relative error} properties, two kinds of assumptions, namely, \textbf{(a)} multiconvexity and differentiability assumption, and \textbf{(b)} (blockwise) Lipschitz differentiability assumption on the unregularized part of objective function are commonly used in the literature,
where \citet{Xu-Yin-BCD2013} mainly used \textbf{assumption (a)},
and the literature \citep{Attouch2013,Xu-Yin-ncvxBCD-2017,Bolte2014} mainly used \textbf{assumption (b)}.
Note that in our cases, the unregularized part of $\calL$ in \eqref{Eq:2-split-min},
\begin{align*}
\calR_n(\bV_N; \bY) + \frac{\gamma}{2}  \sum_{i=1}^N \|\bV_i - \sigma_i(\bW_i \bV_{i-1})\|_F^2,
\end{align*}
and that of
$\bcL$ in \eqref{Eq:3-split-min},
\[\calR_n(\bV_N; \bY)+ \frac{\gamma}{2}\sum_{i=1}^N \left[ \|\bV_i - \sigma_i(\bU_i)\|_F^2 + \|\bU_i - \bW_i \bV_{i-1}\|_F^2 \right]\]
usually do not satisfy any of the block multiconvexity and differentiability assumptions (i.e., \textbf{assumption (a)}), and the blockwise Lipschitz differentiability assumption (i.e., \textbf{assumption (b)}).
For instance, when $\sigma_i$ is ReLU or leaky ReLU,
the functions $\|\bV_i - \sigma_i(\bW_i \bV_{i-1})\|_F^2$ and $\|\bV_i - \sigma_i(\bU_i)\|_F^2$ are non-differentiable and nonconvex with respect to $\bW_i$-block and $\bU_i$-block, respectively.
%These two assumptions are generally used to yield the desired sufficient descent and relative error properties.
%%, which are crucial for the convergence of a nonconvex algorithm \citep{Attouch2013}.
%However, both assumptions are usually violated in the DNN training settings as discussed before.

In order to overcome these challenges, in this paper,
we first exploit the proximal strategies for all the non-strongly convex subproblems (see \Cref{alg:BCD-3-split}) to cheaply obtain the desired sufficient descent property (see \Cref{Lemm:BCD-suff-desc}),
and then take advantage of the Lipschitz continuity of the activations as well as the specific splitting formulations to yield the desired relative error property (see \Cref{Lemm:BCD-grad-bound}). Below we present these two key lemmas, while leaving other details in Appendix (where the verification of the \KL property for the concerned DNN training models satisfying \Cref{Assumption-model} can be referred to \Cref{Thm:KL-property} in \Cref{sc:KL}, and the verification of the \textit{continuity} condition is shown by \eqref{Eq:continuity-cond} in \Cref{app:proof-thm5}).
Based on \Cref{Lemm:BCD-suff-desc,Lemm:BCD-grad-bound}, \Cref{Thm:KL-property} and \eqref{Eq:continuity-cond}, we prove \Cref{Thm:BCD-ConvThm1} according to \citet[Theorem 2.9]{Attouch2013}, with details shown in \Cref{sc:proof-main-theorem}.

\subsection{Sufficient descent lemma}

We state the established sufficient descent lemma as follows.

\begin{lemma}[Sufficient descent]
	\label{Lemm:BCD-suff-desc}
    Let $\{{\cal P}^k\}_{k\in\NN}$ be a sequence generated by the BCD method (\Cref{alg:BCD-3-split}). Then, under the assumptions of \Cref{Thm:BCD-ConvThm1},
	\begin{align}
%	\label{Eq:BCD-suff-desc}
	\bcL({\cal P}^k)  \leq \overline{\calL}(\calP^{k-1}) - a\|\calP^k - \calP^{k-1}\|_F^2,
	\end{align}
    for some constant $a>0$ specified in the proof. 
\end{lemma}
From \Cref{Lemm:BCD-suff-desc}, the Lagrangian sequence $\{\bcL({\cal P}^k)\}$ is monotonically decreasing, and the descent quantity of each iterate can be lower bounded by the discrepancy between the current iterate and its previous iterate.
This lemma is crucial for the global convergence of a nonconvex algorithm.
It tells at least the following four important items:
\textbf{(i)} $\{\bcL({\cal P}^k)\}_{k\in\NN}$ is convergent if $\bcL$ is lower bounded;
\textbf{(ii)} $\{{\cal P}^k\}_{k\in\NN}$ itself is bounded if $\bcL$ is coercive and ${\cal P}^0$ is finite;
\textbf{(iii)} $\{\calP^k\}_{k\in\NN}$ is square summable, i.e., $\sum_{k=1}^{\infty} \|\calP^k - \calP^{k-1}\|_F^2 < \infty$,
implying its asymptotic regularity, i.e., $\|\calP^k - \calP^{k-1}\|_F \to 0$ as $k\to \infty$;
and
\textbf{(iv)} $\frac{1}{K} \sum_{k=1}^{K} \|\calP^k - \calP^{k-1}\|_F^2 \to 0$ at a rate of $\calO(1/K)$.
Leveraging \Cref{Lemm:BCD-suff-desc}, we can establish the global convergence (i.e., the whole sequence convergence) of BCD in DNN training settings. In contrast, \citet{Davis2018} only establish the subsequence convergence of SGD in DNN training settings.
Such a gap between the subsequence convergence of SGD in \citet{Davis2018} and the whole sequence convergence of BCD in this paper exists mainly because SGD can only achieve the descent property but not the sufficient descent property.

It can be noted from \Cref{Lemm:BCD-suff-desc} that neither \textit{multiconvexity and differentiability} nor \textit{Lipschitz differentiability} assumptions are imposed on the DNN training models to yield this lemma, as required in the literature \citep{Xu-Yin-BCD2013,Attouch2013,Xu-Yin-ncvxBCD-2017,Bolte2014}.
Instead, we mainly exploit the proximal strategy for all non-strongly convex subproblems in \Cref{alg:BCD-3-split} to establish this lemma.

\subsection{Relative error lemma}

We now present the obtained relative error lemma.

\begin{lemma}[Relative error]
	\label{Lemm:BCD-grad-bound}
	Under the conditions of \Cref{Thm:BCD-ConvThm1},
	let $\cal B$ be an upper bound of $\calP^{k-1}$ and $\calP^k$ for any positive integer $k$,
	$L_\calB$ be a uniform Lipschitz constant of $\sigma_i$ on the bounded set $\{\calP: \|\calP\|_F \leq \calB\}$. Then for any positive integer $k$, it holds that,
	\begin{align*}
    \|\bar{\bg}^k\|_F \leq \bar{b} \|\calP^k - \calP^{k-1}\|_F, \quad \bar{\bg}^k \in \partial \overline{\calL}({\cal P}^k)
	\end{align*}
	for some constant $\bar{b}>0$ specified later in the proof,
    where
	\begin{align*}
	\partial \overline{\calL}({\cal P}^k)
	:= (\{\partial_{\bW_i}\overline{\calL}\}_{i=1}^N, \{\partial_{\bV_i} \overline{\calL}\}_{i=1}^N, \{\partial_{\bU_i} \overline{\calL} \}_{i=1}^N)(\calP^k).
	\end{align*}	
\end{lemma}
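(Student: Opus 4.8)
The plan is to prove the relative error bound block by block, exploiting the explicit first-order optimality conditions of each subproblem in \Cref{alg:BCD-3-split}. The key observation is that $\bcL$ has the form ``regularizer plus smooth coupling terms,'' so the subdifferential $\partial\bcL(\calP^k)$ decomposes across blocks, and in each block the nonsmooth part contributes only through the convex regularizer (or the loss in the $\bV_N$-block), whose subgradient is pinned down exactly by the optimality condition of that block's update. I would organize the proof around the following identity for each block: the optimality condition of the $k$-th subproblem states that $\zero$ lies in the subdifferential of that subproblem's objective, which is the true partial subgradient of $\bcL$ plus the proximal term $\alpha(\cdot^k-\cdot^{k-1})$ plus the discrepancy arising from the fact that some coupling terms in the subproblem are evaluated at stale (previous-iterate) values of neighboring blocks. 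Subtracting, one expresses the genuine partial subgradient $\partial_{\bullet}\bcL(\calP^k)$ as a sum of a proximal term and ``staleness'' terms.

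\textbf{Key steps, in order.} First I would fix the bounded region: since $\calP^{k-1},\calP^k$ both lie in $\{\|\calP\|_F\le\calB\}$, the activations $\sigma_i$ are Lipschitz there with constant $L_\calB$, and the finitely many linear maps appearing (multiplication by $\bW_i^k$, by $\bV_{i-1}^{k-1}$, etc.) have operator norms bounded by a constant depending only on $\calB$. Second, for each block variable I write down the optimality condition. For the proximal-type updates (the $\bW_i$-, $\bU_i$-, and $\bV_N$-blocks), the condition reads $\zero\in\partial r_i(\bW_i^k)+\gamma(\text{linear residual})+\alpha(\bW_i^k-\bW_i^{k-1})$ and similarly for the others; the point is that the subgradient of the regularizer is thereby determined. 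Third, I assemble the block of $\partial\bcL(\calP^k)$ by re-deriving what the partial subgradient should be when \emph{all} coordinates are at their current iterate $\calP^k$, and subtract the optimality condition. The leftover is a linear combination of two kinds of terms: proximal terms $\alpha(\cdot^k-\cdot^{k-1})$, and cross-terms of the form $\gamma(\text{coupling evaluated at }\calP^k)-\gamma(\text{coupling evaluated with some stale block})$. Fourth, each cross-term is bounded by a Lipschitz/operator-norm estimate times $\|\calP^k-\calP^{k-1}\|_F$; this is where \Cref{Assumption-model}(b) enters, controlling differences like $\|\sigma_i(\bU_i^k)-\sigma_i(\bU_i^{k-1})\|_F\le L_\calB\|\bU_i^k-\bU_i^{k-1}\|_F$ and products like $\|\bW_i^k\bV_{i-1}^k-\bW_i^k\bV_{i-1}^{k-1}\|_F$, which are handled by adding and subtracting intermediate terms and using boundedness. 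Finally I sum the block-wise bounds, collect constants into a single $\bar b>0$ depending on $\alpha,\gamma,L_\calB$ and the operator-norm bound, and take the norm over all blocks.

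\textbf{The main obstacle} will be the careful bookkeeping of the cross-terms in the $\bV_i$-blocks for $i<N$, because $\bV_i$ appears in two coupling terms simultaneously---once through $\|\bV_i-\bV_{i-1}-\sigma_i(\bU_i)\|_F^2$ (or $\|\bV_i-\sigma_i(\bU_i)\|_F^2$) and once through $\|\bU_{i+1}-\bW_{i+1}\bV_i\|_F^2$---and its subproblem evaluates the neighboring blocks $\bU_i,\bW_{i+1},\bU_{i+1}$ at a mix of current and stale values dictated by the backward Gauss--Seidel sweep order. Getting the ``which iterate index'' of every neighboring block right, and then inserting the correct telescoping intermediate terms so that each difference becomes a clean multiple of $\|\calP^k-\calP^{k-1}\|_F$, is the delicate part. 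The nonsmooth, nonconvex composite $\sigma_i(\bU_i)$ does not obstruct this, precisely because $\bU_i$ enters these particular residuals only through $\sigma_i$ evaluated at a \emph{fixed} (within-block) argument, so that a pure Lipschitz estimate suffices and no gradient of the composition is ever needed---this is exactly the advantage of the three-splitting formulation that lets us bypass the Lipschitz-differentiability assumption of \citet{Xu-Yin-BCD2013}.
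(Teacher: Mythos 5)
Your proposal is correct and follows essentially the same route as the paper's proof of \Cref{Lemm:BCD-grad-bound1}: write the first-order optimality condition of each subproblem in \Cref{alg:BCD-3-split}, subtract it from the corresponding partial subdifferential of $\bcL$ evaluated entirely at $\calP^k$ so that only proximal terms and stale-argument cross-terms remain, bound those by the Lipschitz constant $L_\calB$ and the operator-norm bound $\calB$, and sum over blocks. The observation that the set-valued term $(\sigma_i(\bU_i^k)-\bV_i^k)\odot\partial\sigma_i(\bU_i^k)$ cancels exactly between the $\bU_i$-optimality condition and $\partial_{\bU_i}\bcL(\calP^k)$ (since $\bV_i$ is already current when $\bU_i$ is updated) is precisely the mechanism the paper relies on.
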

\Cref{Lemm:BCD-grad-bound} shows that the subgradient sequence of the Lagrangian is upper bounded by the discrepancy between the current and previous iterates.
Together with the asymptotic regularity of $\{\calP^k\}_{k\in\NN}$ yielded by \Cref{Lemm:BCD-suff-desc}, \Cref{Lemm:BCD-grad-bound} shows the critical point convergence.
Also, together with the claim \textbf{(iv)} implied by \Cref{Lemm:BCD-suff-desc}, namely, the ${\cal O}(1/K)$ rate of convergence of $\frac{1}{K} \sum_{k=1}^{K} \|\calP^k - \calP^{k-1}\|_F^2 \to 0$,
\Cref{Lemm:BCD-grad-bound} yields the ${\cal O}(1/K)$ rate of convergence (to a critical point) of BCD, i.e., $\frac{1}{K} \sum_{k=1}^K \|\bar{\bg}^k\|_F \rightarrow 0$ at the rate of $\calO(1/K)$.

From \Cref{Lemm:BCD-grad-bound}, both differentiability and (blockwise) Lipschitz differentiability assumptions are not imposed.
Instead, we only use the Lipschitz continuity (on any bounded set) of the activations, which is a very mild and natural condition satisfied by most commonly used activation functions.
In order to achieve this lemma, we also need to do some special treatments on the specific updates of BCD algorithms as demonstrated in \Cref{app:estab-relative-error}.

\section{Conclusion}
\label{sc:conclusion}
%\commyy{This section can be omitted to meet the page limit.}
The empirical efficiency of BCD methods in deep neural network (DNN) training has been demonstrated in the literature.
However, the theoretical understanding of their convergence is still very limited and it lacks a general framework due to the fact that DNN training is a highly nonconvex problem.
In this paper, we fill this void by providing a general methodology to establish the global convergence of the BCD methods for a class of DNN training models,
which encompasses most of the commonly used BCD methods in the literature as special cases.
Under some mild assumptions, we establish the global convergence at a rate of $\calO(1/k)$, with $k$ being the number of iterations, to a critical point of the DNN training models with several variable splittings. Our theory is also extended to residual networks with general losses which have Lipschitz continuous gradients. Such work may lay a theoretical foundation of BCD methods for their applications to deep learning.

\section*{Acknowledgments}
The work of Jinshan Zeng is supported in part by the National Natural Science Foundation (NNSF) of China (No.~61603162, 61876074).
The work of Tim Tsz-Kit Lau is supported by the University Fellowship for first-year Ph.D. students by The Graduate School at Northwestern University.
The work of Shao-Bo Lin is supported in part by the NNSF of China (No.~61876133).
The work of Yuan Yao is supported in part by HKRGC grant 16303817, 973 Program of China (No.~2015CB85600), NNSF of China (No.~61370004, 11421110001), as well as grants from Tencent AI Lab, Si Family Foundation, Baidu BDI, and Microsoft Research-Asia. The authors thank Prof.~Jian Peng from UIUC for helpful discussions.

%\newpage
%\bibliographystyle{plainnat}
\bibliography{reference}
\bibliographystyle{icml2019}

%%%%%%%%%%%%%%%%%%%%%%%%%%%%%%%%%%%%%%%%%%%%%%%%%%%%%%%%%%%%%%%%%%%%%%%%%%%%%%%
%%%%%%%%%%%%%%%%%%%%%%%%%%%%%%%%%%%%%%%%%%%%%%%%%%%%%%%%%%%%%%%%%%%%%%%%%%%%%%%
% DELETE THIS PART. DO NOT PLACE CONTENT AFTER THE REFERENCES!
%%%%%%%%%%%%%%%%%%%%%%%%%%%%%%%%%%%%%%%%%%%%%%%%%%%%%%%%%%%%%%%%%%%%%%%%%%%%%%%
%%%%%%%%%%%%%%%%%%%%%%%%%%%%%%%%%%%%%%%%%%%%%%%%%%%%%%%%%%%%%%%%%%%%%%%%%%%%%%%
\newpage
\onecolumn
\appendix
\section*{Appendix}
\section{Implementations on BCD methods}
\label{sc:BCD-implementation}
In this section, we provide several remarks to discuss the specific implementations of BCD methods. Reproducible PyTorch codes can be found at: \url{https://github.com/timlautk/BCD-for-DNNs-PyTorch} or \url{https://github.com/yao-lab/BCD-for-DNNs-PyTorch}.

\begin{remark}[On the initialization of parameters]
\label{remark:initialization}
In practice, the weights $\{\bW_i\}_{i=1}^N$  are generally initialized according to some Gaussian distributions with small standard deviations.
The bias vectors are usually set as all one vectors scaled by some small constants.
%We initialize all weights $\{\bW_i\}_{i=1}^N$ from a Gaussian distribution with a standard deviation of $0.01$ and the bias vectors as vectors of all $0.1$, while
Given the weights and bias vectors,
the auxiliary variables $\{\bU_i\}_{i=1}^N$ and state variables $\{\bV_i\}_{i=1}^N$ are usually initialized by a single forward pass through the network.
\end{remark}

\begin{remark}[On the update order]
\label{remark:update-order}
%\paragraph{On the update order.}
We suggest such a backward update order in this paper due to the nested structure of DNNs.
Besides the update order presented in \Cref{alg:BCD-3-split},
any arbitrary deterministic update order can be incorporated into the BCD methods, and our proofs still go through.
\end{remark}

\begin{remark}[On the distributed implementation]
\label{remark:distributed}
%\paragraph{On the distributed implementation.}
One major advantage of BCD is that it can be implemented in distributed and parallel manner like in ADMM.
%\section{Distributed implementation of BCD methods}
%\label{app:distributed-implementation}
Specifically, given $m$ servers, the total training data are distributed to these servers. Denotes $S_j$ as the subset of samples at server $j$. Thus, $n = \sum_{j=1}^m \sharp (S_j)$, where $\sharp (S_j)$ denotes the cardinality of $S_j$.
For each layer $i$, the state variable $\bV_i$ is divided into $m$ submatrices by column,
i.e., $\bV_i := ((\bV_i)_{:S_1}, \ldots, (\bV_i)_{:S_m})$, where $(\bV_i)_{:S_j}$ denotes the submatrix of $V_i$ including all the columns in the index set $S_j$.
The auxiliary variables $\bU_i$'s are decomposed similarly.
From \Cref{alg:BCD-3-split}, the updates of $\{\bV_i\}_{i=1}^N$ and $\{\bU_i\}_{i=1}^N$ do not need any communication and thus, can be computed in a parallel way.
The difficult part is the update of weight $\bW_i$, which is generally hard to parallelize.
To deal with this part, there are some effective strategies suggested in the literature like \citet{Goldstein-ADMM-DNN2016}.
\end{remark}

\section{Proof of \Cref{Proposition:special-cases}}
\label{app:proof-proposition1}
\begin{proof}
We verify these special cases as follows.
	
\textbf{On the loss function $\ell$:}
Since these losses are all nonnegative and continuous on their domains, they are proper lower semicontinuous and lower bounded by $0$.
In the following, we only verify that they are either real analytic or semialgebraic.
	\begin{enumerate}[label=(a\arabic*)]
		\item If $\ell(t)$ is the squared ($t^2$) or exponential ($\e^t$) loss, then according to \citet{Krantz2002-real-analytic}, they are real analytic.
		
		\item \label{a2} If $\ell(t)$ is the logistic loss ($\log(1+\e^{-t})$), since it is a composition of logarithm and exponential functions which both are real analytic, thus according to \Cref{Lemm:real-analytic}, the logistic loss is real analytic.
		
		\item If $\ell(\bu;\by)$ is the cross-entropy loss, i.e., given $\by\in \RR^{d_N}$, $\ell(\bu; \by) = -\frac{1}{d_N}[\langle \by, \log\widehat{\by}(\bu)\rangle + \langle \One-\by, \log(\One-\widehat{\by}(\bu))\rangle]$, where $\log$ is performed elementwise and $\left( \widehat{\by}(\bu)_i\right) _{1\le i \le d_N} := \left( (1+\e^{-u_i})^{-1}\right)_{1\le i \le d_N} $ for any $\bu \in \RR^{d_N}$, which can be viewed as a linear combination of logistic functions, then by \ref{a2} and \Cref{Lemm:real-analytic}, it is also analytic.
		
		\item
		If $\ell$ is the hinge loss, i.e., given $\by\in \RR^{d_N}$, $\ell(\bu;\by) := \max\{0,1-\left\langle \bu, \by\right\rangle \}$ for any $\bu \in \RR^{d_N}$, by  \Cref{Lemm:semialgbraic}(1), it is semialgebraic, because its graph is $\mathrm{cl}(\calD)$, the closure of the set $\calD$, where
		\begin{align*}
		{\cal D} = \{(\bu,z): 1-\left\langle \bu, \by\right\rangle -z=0, \bm{1}-\bu \succ 0\}\cup \{(\bu,z): z=0, \left\langle \bu, \by\right\rangle -1>0\}.
		\end{align*}
	\end{enumerate}
	
	\textbf{On the activation function $\sigma_i$:}
    Since all the considered specific activations are continuous on their domains, they are Lipschitz continuous on any bounded set.
    In the following, we only need to check that they are either real analytic or semialgebraic.
	\begin{enumerate}[label=(b\arabic*)]
		\item
		If $\sigma_i$ is a linear or polynomial function, then according to \citet{Krantz2002-real-analytic}, $\sigma_i$ is real analytic.
		
		\item
		If $\sigma_i(t)$ is sigmoid, $(1+\e^{-t})^{-1}$, or hyperbolic tangent, $\tanh t:=\frac{\e^t - \e^{-t}}{\e^t + \e^{-t}}$, then the sigmoid function is a composition $g\circ h$ of these two functions where $g(u) = \frac{1}{1+u}, u>0$ and $h(t) = \e^{-t}$ (resp.~$g(u) = 1-\frac{2}{u+1}, u>0$ and $h(t)=\e^{2t}$ in the hyperbolic tangent case). According to \citet{Krantz2002-real-analytic}, $g$ and $h$ in both cases are real analytic. Thus, according to \Cref{Lemm:real-analytic}, sigmoid and hyperbolic tangent functions are real analytic.
		
		\item
		If $\sigma_i$ is ReLU, i.e., $\sigma_i(u):=\max\{0,u\}$, then we can show that ReLU is semialgebraic
		since its graph is $\mathrm{cl}({\cal D})$, the closure of the set ${\cal D}$, where
		\begin{align*}
		{\cal D} =
		\{(u,z): u-z=0, u>0\}\cup \{(u,z): z=0, -u>0\}.
		\end{align*}
		
		\item
		Similar to the ReLU case, if $\sigma_i$ is leaky ReLU, i.e., $\sigma_i(u) = u$ if $u>0$, otherwise $\sigma_i(u) = a u$ for some $a>0$, then we can similarly show that leaky ReLU is semialgebraic since its graph is $\mathrm{cl}(\calD)$, the closure of the set $\calD$, where
		\begin{align*}
		{\cal D} =
		\{(u,z): u-z=0, u>0\}\cup \{(u,z): au-z=0, -u>0\}.
		\end{align*}
		
		\item
		If $\sigma_i$ is polynomial as used in \citet{Poggio-landscape2017}, then according to \citet{Krantz2002-real-analytic}, it is real analytic.

        \item
		If $\sigma_i$ is softplus, i.e., $\sigma_i(u) = \frac{1}{t}\log(1+\e^{tu})$ for some $t>0$, since it is a composition of two analytic functions $\frac{1}{t}\log(1+u)$ and $\e^{tu}$, then according to \citet{Krantz2002-real-analytic}, it is real analytic.
	\end{enumerate}
	
	\textbf{On $r_i(\bW_i)$, $s_i(\bV_i)$:}
By the specific forms of these regularizers, they are nonnegative, lower semicontinuous and continuous on their domain.
In the following, we only need to verify they are either real analytic and semialgebraic.
	\begin{enumerate}[label=(c\arabic*)]
		\item \textbf{the squared $\ell_2$ norm $\|\cdot\|_2^2$:}
		According to \citet{Bochnak-semialgebraic1998}, the $\ell_2$ norm is semialgebraic, so is its square according to \Cref{Lemm:semialgbraic}(2), where $g(t)=t^2$ and $h(\bW)=\|\bW\|_2$.
		
		\item \textbf{the squared Frobenius norm $\|\cdot\|_F^2$:} The squared Frobenius norm is semiaglebraic since it is a finite sum of several univariate squared functions.
		
		\item \textbf{the elementwise $1$-norm $\|\cdot\|_{1,1}$:}
		Note that $\|\bW\|_{1,1} = \sum_{i,j} |\bW_{ij}|$ is the finite sum of absolute functions $h(t)=|t|$. According to \Cref{Lemm:semialgbraic}(1), the absolute value function is semialgebraic since its graph is the closure of the following semialgebraic set
		\[
		{\cal D}=\{(t,s): t+s =0, -t>0\}\cup \{(t,s): t-s =0, t>0\}.
		\]
		Thus, the elementwise $1$-norm is semialgebraic.
		
		\item \textbf{the elastic net:}
		Note that the elastic net is the sum of the elementwise $1$-norm and the squared Frobenius norm. Thus, by (c2), (c3) and  \Cref{Lemm:semialgbraic}(3), the elastic net is semialgebraic.
		
		\item
		If $r_i$ or $s_i$ is the indicator function of nonnegative closed half space or a closed interval (box constraints), by \Cref{Lemm:semialgbraic}(1), any polyhedral set is semialgebraic such as the nonnegative orthant $\Rp^{p\times q}=\{\bW\in \RR^{p\times q}, \bW_{ij} \geq 0, \forall i,j\}$, and the closed interval. Thus, by \Cref{Lemm:semialgbraic}(4), $r_i$ or $s_i$ is semialgebraic in this case.
\hfill$\Box$
	\end{enumerate}

\end{proof}

\section{Proof of \Cref{Thm:BCD-ConvThm1}}
\label{sc:proof-main-theorem}

To prove \Cref{Thm:BCD-ConvThm1},
we first show that the Kurdyka-{\L}ojasiewicz (\KL) property holds for the considered DNN training models (see \Cref{Thm:KL-property}),
then establish the function value convergence of the BCD methods (see \Cref{Thm:BCD-ConvThm-value}), followed by establishing their global convergence as well as the $\calO(1/k)$ convergence rate to a critical point as shown in \Cref{Thm:BCD-ConvThm}.
Combining \Cref{Thm:KL-property}, \Cref{Thm:BCD-ConvThm-value,Thm:BCD-ConvThm} yields \Cref{Thm:BCD-ConvThm1}.

\subsection{The Kurdyka-{\L}ojasiewicz Property in Deep Learning}
\label{sc:KL}

Before giving the definition of the \KL property, we first introduce some notions and notations from variational analysis, which can be found in \citet{Rockafellar1998}.

%Let $h:\RR^p \rightrightarrows \RR^q$ be a point-to-set mapping for some $p,q \in \mathbb{N}$, its \textit{graph} is defined by
%\[
%\mathrm{Graph}(h):=\{(x,y)\in \RR^p \times \RR^q: \ y\in h(x)\},
%\]
%which its domain is given by $\mathrm{dom}(h):=\{x\in \RR^n: h(x)\neq \varnothing\}.$
%Similarly, the graph of an extended-real-valued function $g:\RR^n \rightarrow \RR \cup \{+\infty\}$ is defined by
%Let $h:\RR^p \to \RR \cup \{+\infty\}$ be an extended-real-valued function (respectively, $h:\RR^p \rightrightarrows \RR^q$ be a point-to-set mapping), its \textit{graph} is defined by
%\begin{align*}
%\mathrm{Graph}(h) := \{(\bx,y)\in \RR^p \times \RR: y = h(\bx)\}, \quad (\text{resp.}\; \mathrm{Graph}(h) := \{(\bx,\by)\in \RR^p \times \RR^q: \by \in h(\bx)\}),
%\end{align*}
%and its domain by $\dom(h):=\{\bx\in \RR^p: h(\bx)<+\infty\}$ (resp. $\dom(h) :=\{\bx\in\RR^p: h(\bx)\neq \varnothing\}$).
%%The \textit{epigraph} of $g$ is defined as usual as
%%\[
%%\mathrm{epi}(g):=\{(x,\lambda)\in \RR^n \times \RR: g(x)\leq \lambda\}.
%%\]
%When $h$ is a proper function, i.e., when $\dom(h) \neq \varnothing,$ the set of its global minimizers (possibly empty) is denoted by
%\[
%\arg\min h:=\{\bx\in \RR^p: h(\bx) = \inf h\}.
%\]
The notion of subdifferential plays a central role in the following definition of the \KL property.
For each $\bx\in \dom(h)$, the \textit{Fr\'{e}chet subdifferential} of $h$ at $\bx$, written $\widehat{\partial}h(\bx)$, is the set of vectors $\bv\in \RR^p$ which satisfy
\[
\liminf_{\by\neq \bx, \by\rightarrow \bx} \ \frac{h(\by)-h(\bx)-\langle \bv,\by-\bx\rangle}{\|\bx-\by\|} \geq 0.
\]
When $\bx\notin \dom(h),$ we set $\widehat{\partial} h(\bx) = \varnothing.$
The \emph{limiting-subdifferential} (or simply \emph{subdifferential}) of $h$ introduced in \citet{Mordukhovich-2006}, written $\partial h(\bx)$ at $\bx\in \mathrm{dom}(h)$,  is defined by
%The limiting processes used in an algorithmic context neccessitate the introduction of the more stable notion of .
%The subdifferential of $f$ at $x\in \mathrm{dom}(f)$, written $\partial f$, is defined as follows
\begin{align}
\label{Def:limiting-subdifferential}
\partial h(\bx) := \{\bv\in \RR^p: \exists \bx^k \to \bx,\; h(\bx^k)\to h(\bx), \; \bv^k \in \widehat{\partial} h(\bx^k) \to \bv\}.
\end{align}
%It is straightforward to check from the definition the following closedness property of $\partial h$:
%
%Let $(x^k,v^k)_{k\in \mathbb{N}}$ be a sequence in $\RR^p \times \RR^p$ such that $(x^k,v^k)\in \mathrm{Graph}(\partial h)$ for all $k\in \mathbb{N}$. If $(x^k,v^k)$ converges to $(x,v)$, and $h(x^k)$ converges to $h(x)$, then $(x,v)\in \mathrm{Graph}(\partial h)$.
%The generalized notion of differentiation gives birth to the generalized notion of critical point.
A necessary (but not sufficient) condition for $\bx\in \RR^p$ to be a minimizer of $h$ is $\zero \in \partial h(\bx)$.
%\begin{align}
%\label{Def:limiting-critical}
%\partial h(x) \in 0.
%\end{align}
A point that satisfies this inclusion is called \textit{limiting-critical} or simply \textit{critical}.
The distance between a point $\bx$ to a subset ${\cal S}$ of $\RR^p$, written $\mathrm{dist}(\bx,\calS)$, is defined by $\mathrm{dist}(\bx,\calS) = \inf \{\|\bx-\bs\|: \bs\in \calS\}$, where $\|\cdot\|$ represents the Euclidean norm.

The \KL property \citep{Lojasiewicz-KL1963,Lojasiewicz-KL1993,Kurdyka-KL1998,Bolte-KL2007a,Bolte-KL2007} plays a central role in the convergence analysis of nonconvex algorithms \citep[see e.g.,][]{Attouch2013,Xu-Yin-BCD2013,Wang-ADMM2018}. The following definition is adopted from \citet{Bolte-KL2007a}.

\begin{definition}[Kurdyka-{\L}ojasiewicz property]
	\label{Def-KLProp}
	A function
	%$h: {\cal X} \rightarrow \RR\cup \{+\infty\}$
	$h:\RR^p \rightarrow \RR\cup \{+\infty\}$
	is said to have the \textbf{Kurdyka-{\L}ojasiewicz (\KL) property} at $\bx^*\in\dom(\partial h)$ if there exist a neighborhood $U$ of $\bx^*$, a constant $\eta$, and a continuous concave function $\varphi(s) = cs^{1-\theta}$ for some $c>0$ and $\theta \in [0,1)$
	%a continuous concave function $\varphi:[0,\eta)\rightarrow \RR_{+}$
	such that the Kurdyka-{\L}ojasiewicz inequality holds: For all $\bx \in U \cap \dom(\partial h)$ and $h(\bx^*) < h(\bx) < h(\bx^*)+\eta$,
	\begin{equation}
	\varphi'(h(\bx)-h(\bx^*)) \cdot\mathrm{dist}(\zero,\partial h(\bx))\geq 1, 	\label{KLIneq}
	\end{equation}
	%where $\partial h(x)$ denotes the \textit{limiting-subdifferential} of $h$ at $x\in \mathrm{dom}(h)$ (introduced in \citet{Mordukhovich-2006}), $\mathrm{dom}(h):=\{x\in {\cal X}: h(x)<+\infty\}$, $\mathrm{dom}(\partial h) :=\{x\in {\cal X}: \partial h(x)\neq \emptyset\}$, and $\mathrm{dist}(0,\partial h(x)) := \min \{\|z\|: z\in \partial h(x)\}$, where $\|\cdot\|$ represents the Euclidean norm.
	where $\theta$ is called the \KL exponent of $h$ at $\bx^*$. Proper lower semi-continuous functions which satisfy the Kurdyka-{\L}ojasiewicz inequality at each point of $\dom(\partial h)$ are called \KL functions.
\end{definition}
Note that we have adopted in the definition of the \KL inequality \eqref{KLIneq} the following notational conventions: $0^0=1, \infty/\infty=0/0=0.$
Such property was firstly introduced by \citet{Lojasiewicz-KL1993} on real analytic functions \citep{Krantz2002-real-analytic} for $\theta \in \left[ \tfrac{1}{2},1\right) $, then was extended to functions defined on the o-minimal structure in \citet{Kurdyka-KL1998}, and later was extended to nonsmooth subanalytic functions in \citet{Bolte-KL2007a}.

By the definition of the \KL property, it means that the function under consideration is sharp up to a reparametrization \citep{Attouch2013}.
Particularly, when $h$ is smooth, finite-valued, and $h(\bx^*)=0$, the inequality \eqref{KLIneq} can be rewritten
\[
\|\nabla (\varphi \circ h)(\bx)\|\geq 1,
\]
for each convenient $\bx \in \RR^p$. This inequality may be interpreted as follows: up to the reparametrization of the values of $h$ via $\varphi$, we face a \textit{sharp function}. Since the function $\varphi$ is used here to turn a singular region---a region in which the gradients are arbitrarily small---into a regular region, i.e., a place where the gradients are bounded away from zero, it is called a \textit{desingularizing} function for $h$. For theoretical and geometrical developments concerning this inequality, see \citet{Bolte-KL2007}.
%The \KL property of $h$ at some point $x^*$ means that ``$h$ is amenable to sharpness at $x^*$'' \citep{Attouch2013},
%%\commyy{a reference here, or more explanations},
%and the \KL inequality \eqref{KLIneq} is equivalent to
%\begin{equation}
%\label{KLIneq1}
%\mathrm{dist}(0,\partial (\varphi \circ (h(x)-h(x^*)))) \geq 1,
%\end{equation}
%for all $x \in U\cap \{x: h(x^*)<h(x)<h(x^*) + \eta\}$ (simply use the ``one-sided'' chain-rule \citep[Theorem 10.6]{Rockafellar1998}).
\KL functions include real analytic functions (see \Cref{Def:real-analytic}), semialgebraic functions (see \Cref{Def:semialgebraic}), tame functions defined in some o-minimal structures \citep{Kurdyka-KL1998}, continuous subanalytic functions \citep{Bolte-KL2007a,Bolte-KL2007} and locally strongly convex functions \citep{Xu-Yin-BCD2013}.
%(more information can be referred to \Cref{app:proof_thm_1}).
%Sec. 2.2 in \citet{Xu-Yin-BCD2013} and references therein).

%Moreover, the composition $g\circ h$ of two semialgebraic mappings $h:A\rightarrow B$ and $g:B\rightarrow C$ is also semialgebraic
% \citep[Proposition 2.2.6]{Bochnak-semialgebraic1998}.

In the following, we establish the \KL properties\footnote{It should be pointed out that we need to use the vectorization of the matrix variables involved in ${\cal L}$, $\bcL$ and $\bcL_{\res}$ in order to adopt the existing definitions of \KL property, real analytic functions and semialgebraic functions. We still use the matrix notation for the simplicity of notation.} of the DNN training models with variable splitting, i.e., the functions  $\calL$ defined in \eqref{Eq:2-split-min} and $\overline{\calL}$ defined in \eqref{Eq:3-split-min}.

\begin{proposition}[\KL properties of deep learning]
\label{Thm:KL-property}
Suppose that \Cref{Assumption-model} hold. Then the functions $\calL$ defined in \eqref{Eq:2-split-min}, and $\bcL$ defined in \eqref{Eq:3-split-min} when restricted to any closed set are \KL functions.	
%It includes:
%(a) the loss function $\ell$ is the squared, logistic, hinge, exponential or cross-entropy losses,
%(b) the activation function $\sigma_i$ ($i=1\dots, N-1$) is ReLU, leaky ReLU, sigmoid, arctanh, linear, or polynomial function,
%and (c) the regularizers $r_i$ and $s_i$ ($i=1,\ldots,N$) are the squared $\ell_2$ norm, squared Frobenius norm, the elementwise $1$-norm, or the sum of squared Frobenuis norm and elementwise $1$-norm (say, elastic net in the vector case), or the indicator function of nonnegative closed half space or a closed interval.
\end{proposition}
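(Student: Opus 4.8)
The plan is to prove that $\calL$ and $\bcL$ are KŁ functions by writing each as a finite sum of functions that are individually either real analytic or semialgebraic and continuous on their domains, and then invoking the stability of the KŁ class under such sums. First I would record that both objectives are proper, lower semicontinuous and bounded below, so that \Cref{Def-KLProp} applies: by \Cref{Assumption-model}(a),(c) the terms $\calR_n$, $r_i$ and $s_i$ are nonnegative and lower semicontinuous, while the quadratic coupling terms and the linear maps are continuous.

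The second step is a summand-by-summand inspection. The empirical risk $\calR_n(\bV_N;\bY)=\frac1n\sum_j \ell((\bV_N)_{:j},\by_j)$ is a finite sum of copies of $\ell$ precomposed with the linear coordinate-selection maps $\bV_N\mapsto(\bV_N)_{:j}$; as these maps are polynomial, \Cref{Lemm:real-analytic} and \Cref{Lemm:semialgbraic} show that $\calR_n$ inherits the class of $\ell$. Each $r_i(\bW_i)$ and $s_i(\bV_i)$, viewed as a function of the whole tuple, simply retains the class of $r_i$, $s_i$. Every term $\|\bU_i-\bW_i\bV_{i-1}\|_F^2$ is polynomial, hence simultaneously real analytic and semialgebraic. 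Finally, $\|\bV_i-\sigma_i(\bU_i)\|_F^2$ (and in the two-splitting case $\|\bV_i-\sigma_i(\bW_i\bV_{i-1})\|_F^2$) is the composition of the polynomial map $(\bA,\bB)\mapsto\|\bA-\bB\|_F^2$ with the pair formed by $\bV_i$ and the elementwise application of $\sigma_i$ to $\bU_i$ (resp. to the polynomial $\bW_i\bV_{i-1}$); by the composition rules in \Cref{Lemm:real-analytic} and \Cref{Lemm:semialgbraic} this inherits the class of $\sigma_i$. Thus $\calL$ and $\bcL$ are each a finite sum of functions, every one of which is real analytic or semialgebraic and continuous on its domain.

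The last and hardest step is to conclude that such a sum is a KŁ function, and here care is needed: the sum of a real analytic function and a semialgebraic function need not lie in either class, so one cannot simply quote ``real analytic or semialgebraic $\Rightarrow$ KŁ''. The resolution exploits the locality of the KŁ property together with o-minimality. Since the inequality in \Cref{Def-KLProp} is verified at a single point $\bx^*$ inside a bounded neighborhood $U$, I would restrict every summand to a fixed bounded box containing $U$ and lying in the domain; on such a box each real analytic summand becomes a restricted analytic function, and each semialgebraic summand is likewise definable, in the o-minimal structure $\mathbb{R}_{\an}$ of restricted analytic functions. Because o-minimal structures are closed under finite sums, products and composition, the restriction of $\calL$ (resp. $\bcL$) is definable in $\mathbb{R}_{\an}$, and the continuity hypothesis \Cref{Assumption-model}(d) guarantees it is continuous on its domain. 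Kurdyka's theorem for definable functions together with its nonsmooth extension to lower semicontinuous definable functions then yields the KŁ inequality at $\bx^*$; since $\bx^*$ and $U$ were arbitrary, $\calL$ and $\bcL$ are KŁ functions. An equivalent route avoiding o-minimal language is to note that each summand is subanalytic, that a finite sum of \emph{continuous} subanalytic functions is again subanalytic, and that continuous subanalytic functions satisfy the KŁ property. In either route the main obstacle is exactly this passage from the two separate classes to a single framework closed under addition, which is why the continuity assumption \Cref{Assumption-model}(d) and the restriction to bounded sets are essential.
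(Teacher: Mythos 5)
Your proposal is correct and follows essentially the same route as the paper: the identical summand-by-summand decomposition, the same verification that each piece is real analytic or semialgebraic via the composition/sum lemmas, and the same conclusion that the sum is subanalytic and continuous on its domain, hence \KL by \citet[Theorem 3.1]{Bolte-KL2007a}. The only minor difference is that the paper closes the sum of subanalytic pieces under addition by invoking their \emph{nonnegativity} (which also covers extended-real-valued indicator regularizers that are not continuous on all of $\RR^p$), rather than continuity or boundedness of images as in your phrasing, but this does not change the argument.
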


This proposition shows that most of the DNN training models with variable splitting have some \textit{nice} geometric properties, i.e., they are amenable to sharpness at each point in their domains.
%However, the similar geometric property of the original DNN training model \eqref{Eq:dnn-org} is still unknown.
In order to prove this theorem, we need the following lemmas.
The first lemma shows some important properties of real analytic functions.

\begin{lemma}[\citealp{Krantz2002-real-analytic}]
\label{Lemm:real-analytic}
The sums, products, and compositions of real analytic functions are real analytic functions.
\end{lemma}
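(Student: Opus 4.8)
The plan is to prove the three closure statements---sums, products, and compositions of real analytic functions are real analytic---directly from the power-series definition in \Cref{Def:real-analytic}, working one variable at a time and then invoking the criterion noted in that definition (that $h$ on $\RR^p$ is analytic iff $g(t):=h(\bx+t\by)$ is analytic for all $\bx,\by$) to reduce the multivariate case to the one-dimensional one. Fix a point $u$ in the common domain and suppose $f(x)=\sum_{j\ge 0}a_j(x-u)^j$ and $g(x)=\sum_{j\ge 0}b_j(x-u)^j$ both converge on an interval of positive radius centered at $u$. For the \textbf{sum}, I would simply add the series coefficientwise: $(f+g)(x)=\sum_{j\ge 0}(a_j+b_j)(x-u)^j$, which converges on the smaller of the two radii, so $f+g$ is real analytic at $u$.

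For the \textbf{product}, the natural device is the Cauchy product: formally $f(x)g(x)=\sum_{j\ge 0}c_j(x-u)^j$ with $c_j=\sum_{\ell=0}^{j}a_\ell b_{j-\ell}$. The work is to justify that this rearrangement is valid and that the resulting series has positive radius of convergence. I would control this by absolute convergence: choosing $0<\rho$ strictly smaller than both radii, the series $\sum |a_j|\rho^j$ and $\sum |b_j|\rho^j$ converge, and their Cauchy product converges absolutely, so on $|x-u|\le\rho$ the double series $\sum_\ell\sum_m a_\ell b_m (x-u)^{\ell+m}$ may be rearranged freely into the single power series with coefficients $c_j$. This identifies $fg$ with a convergent power series centered at $u$, establishing analyticity of the product.

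The \textbf{composition} is the main obstacle, since one must substitute one convergent power series into another and argue that the formally composed series still converges in a neighborhood. Suppose $h$ is analytic at $v:=g(u)$ with $h(y)=\sum_{k\ge 0}d_k(y-v)^k$ on a disk of radius $R$ about $v$, and $g$ is analytic at $u$ with $g(u)=v$. Writing $g(x)-v=\sum_{j\ge 1}b_j(x-u)^j$ (the constant term vanishes by construction), I would substitute this into $h$ and expand each power $(g(x)-v)^k$ using the product result above, then collect terms by total degree in $(x-u)$. The convergence justification is the delicate point: by continuity of $g$ there is a radius $\delta>0$ on which $|g(x)-v|<R$, and by passing to the majorant series with nonnegative coefficients $|b_j|$ and $|d_k|$ one checks that the doubly-indexed family is absolutely summable for $|x-u|$ small, which both legitimizes the rearrangement into a single power series and furnishes a positive radius of convergence. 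Hence $h\circ g$ is real analytic at $u$. Since $u$ was arbitrary in the relevant open set, each of the three operations preserves real analyticity, and the multivariate versions follow by applying the one-dimensional argument to the restriction $t\mapsto h(\bx+t\by)$; this is exactly the reduction sanctioned in \Cref{Def:real-analytic}. I expect the sum and product to be routine, with the composition's absolute-summability and positive-radius estimate being where essentially all the care is needed.
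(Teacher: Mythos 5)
The paper never proves this lemma---it is imported wholesale from \citet{Krantz2002-real-analytic}---so the comparison is with the standard textbook argument, and your one-dimensional core reproduces it faithfully: coefficientwise addition for sums; the Cauchy product, legitimized by absolute convergence at a radius $\rho$ strictly below both radii of convergence, for products; and the majorant substitution for compositions. One small refinement in the last item: continuity of $g$ giving $|g(x)-v|<R$ is not by itself what the rearrangement needs; what you need is that the \emph{majorant} $\sum_{j\ge1}|b_j|\,|x-u|^j$ stay below a radius of absolute convergence of the series for $h$, which holds for $|x-u|$ small because that majorant is continuous in $|x-u|$ and vanishes at $x=u$. You gesture at exactly this with the absolute-summability check, so the one-variable composition argument is sound in outline.

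The genuine gap is the final step, the reduction of the multivariate case to one variable by restriction to lines. The criterion that $h$ is real analytic on $\RR^p$ whenever $t\mapsto h(\bx+t\by)$ is real analytic for every $\bx,\by\in\RR^p$ is false: the function $h(x,y)=xy^2/(x^2+y^4)$ with $h(0,0)=0$ is real analytic along every affine line in $\RR^2$ (along lines missing the origin it is a quotient of polynomials in $t$ with nonvanishing denominator, while along lines through the origin $h(tu,tv)=tuv^2/(u^2+t^2v^4)$ for $u\neq 0$ and $h(0,tv)\equiv 0$), yet $h(y^2,y)\equiv\tfrac{1}{2}$, so $h$ is not even continuous at the origin. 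The paper's own \Cref{Def:real-analytic} repeats this heuristic as a verification device, but it cannot carry the weight of a proof. The repair is not a reduction at all: one reruns your three arguments directly for power series in several variables, with multi-indexed coefficients and the same Cauchy-product and majorant estimates on a polydisc---which is how \citet{Krantz2002-real-analytic} actually establish the closure properties, and which matters here because the paper applies the lemma to genuinely multivariate functions such as the cross-entropy loss in \Cref{app:proof-proposition1}.
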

Then we present some important properties of semialgebraic sets and mappings, which can be found in \citet{Bochnak-semialgebraic1998}.
\begin{lemma}
\label{Lemm:semialgbraic}
The following hold
\begin{enumerate}
\item[(1)]
The finite union, finite intersection, and complement of semialgebraic sets are semialgebraic.
The closure and the interior of a semialgebraic set are semialgebraic \citep[Proposition 2.2.2]{Bochnak-semialgebraic1998}.

\item[(2)] The composition $g\circ h$ of semialgebraic mappings $h:A\rightarrow B$ and $g:B\rightarrow C$ is semialgebraic
\citep[Proposition 2.2.6]{Bochnak-semialgebraic1998}.
		
\item[(3)] The sum of two semialgebraic functions is semialgebraic (can be referred to the proof of
		\citealt[Proposition 2.2.6]{Bochnak-semialgebraic1998}).
		
\item[(4)] The indicator function of a semialgebraic set is semialgebraic \citep{Bochnak-semialgebraic1998}.
\end{enumerate}
\end{lemma}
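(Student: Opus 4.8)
The plan is to reduce every item to the single deep fact underlying the theory, the Tarski--Seidenberg projection theorem: if $A\subseteq\RR^{p+1}$ is semialgebraic, then its image $\pi(A)\subseteq\RR^p$ under the coordinate projection is semialgebraic (equivalently, any set defined by a first-order formula over semialgebraic predicates with real polynomial coefficients is semialgebraic). Everything else is elementary bookkeeping with the defining representation $\calD=\bigcup_{i}\bigcap_{j}\{P_{ij}=0,\,Q_{ij}>0\}$, so I would invoke Tarski--Seidenberg as a cited black box (it is exactly what the Bochnak references in the statement rest on) and spend the effort assembling the four claims from it.

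For (1), I would first handle the Boolean operations with no projection. Finite union is immediate, since a union of two sets, each written as a finite union of basic sets, is again such a finite union. Finite intersection follows by distributing, $(\bigcup_i A_i)\cap(\bigcup_k B_k)=\bigcup_{i,k}(A_i\cap B_k)$, and observing that the intersection of two basic sets is basic (one simply concatenates the defining equalities and strict inequalities). For complementation I would compute the complement of a single basic set using $\{P\neq 0\}=\{P>0\}\cup\{-P>0\}$ and $\{Q\le 0\}=\{-Q>0\}\cup\{Q=0\}$, so that the complement of a basic set is semialgebraic, and then pass to a general set by De Morgan. The closure and interior are the only places in (1) that need projection: writing $\overline{\calD}=\{\bx:\forall\varepsilon>0\,\exists\by\in\calD,\ \|\bx-\by\|^2<\varepsilon\}$ exhibits $\overline{\calD}$ as defined by a first-order formula over the semialgebraic predicate $\by\in\calD$, hence semialgebraic; the interior then follows from $\interior(\calD)=(\overline{\calD^c})^{c}$ together with the closure under complementation just established.

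For (2) and (3) I would realize each target graph as a projection. The graph of $g\circ h$ equals $\{(\bx,\bz):\exists\by,\ (\bx,\by)\in\mathrm{Graph}(h),\ (\by,\bz)\in\mathrm{Graph}(g)\}$, where the set inside the quantifier is semialgebraic (an intersection of the two cylinders $\mathrm{Graph}(h)\times\RR^r$ and $\RR^p\times\mathrm{Graph}(g)$, using closure under Cartesian products and intersection), and projecting out $\by$ gives $\mathrm{Graph}(g\circ h)$ semialgebraic. The sum is identical in spirit: $\mathrm{Graph}(h_1+h_2)=\{(\bx,z):\exists z_1,z_2,\ (\bx,z_1)\in\mathrm{Graph}(h_1),\ (\bx,z_2)\in\mathrm{Graph}(h_2),\ z=z_1+z_2\}$, again a projection of a semialgebraic set once the basic relation $z-z_1-z_2=0$ is adjoined. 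Part (4) needs no projection at all: under the graph convention for extended-real-valued functions, $\mathrm{Graph}(\iota_\calC)=\calC\times\{0\}$ (the points where $\iota_\calC=+\infty$ fall outside $\RR^p\times\RR$), and a Cartesian product of the semialgebraic sets $\calC$ and $\{0\}$ is semialgebraic directly from the defining representation.

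The only genuinely nontrivial ingredient is Tarski--Seidenberg itself; granting it, the main care is in phrasing the closure/interior and the composition/sum as honest projections and in verifying the auxiliary closure under Cartesian products and finite intersection used along the way. I expect the complementation-of-basic-sets step and the quantifier structure of the closure to be the most error-prone, though entirely routine, parts of the argument.
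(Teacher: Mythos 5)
Your proposal is correct, but it is worth noting that the paper does not actually prove this lemma at all: it is stated as background and discharged entirely by citation to \citet[Propositions 2.2.2 and 2.2.6]{Bochnak-semialgebraic1998}, so your write-up is strictly more detailed than anything in the paper. What you have reconstructed is, in substance, the standard textbook argument that those citations point to: all four items reduce to the Tarski--Seidenberg projection theorem (equivalently, quantifier elimination), with the Boolean operations in (1) handled by direct manipulation of the representation $\calD=\bigcup_i\bigcap_j\{P_{ij}=0,\,Q_{ij}>0\}$, the closure and interior expressed as first-order formulas, and the graphs in (2) and (3) realized as projections of intersections of cylinders. I checked the details and found no gaps. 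In particular, your treatment of (4) is the one place where a blind attempt could easily go wrong, and you got it right: under the paper's convention $\mathrm{Graph}(h)=\{(\bx,y)\in\RR^p\times\RR:\ y=h(\bx)\}$, the points where $\iota_{\calC}=+\infty$ contribute nothing, so $\mathrm{Graph}(\iota_{\calC})=\calC\times\{0\}$, which is semialgebraic as a product; the same convention also makes your projection formula for the sum in (3) consistent with extended-real-valued summands, since the graph of $h_1+h_2$ then records exactly the finite values on $\dom(h_1)\cap\dom(h_2)$, which is what the paper needs when summing regularizers. The trade-off between the two treatments is the usual one: the paper buys brevity by outsourcing to \citet{Bochnak-semialgebraic1998}, while your argument is self-contained modulo a single black box (Tarski--Seidenberg) and makes explicit the auxiliary closure properties (Cartesian products, intersections of basic sets, complements via $\{P\neq 0\}=\{P>0\}\cup\{-P>0\}$ and $\{Q\le 0\}=\{-Q>0\}\cup\{Q=0\}$) that the citations leave implicit.
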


Since our proof involves the sum of real analytic functions and semialgebraic functions, we still need the following lemma, of which the claims can be found in or derived directly from \citet{Shiota1997-subanalytic}.
\begin{lemma}
	\label{Lemm:real-semialg-subanalytic}
	The following hold:
	\begin{enumerate}
		\item[(1)]
		Both real analytic functions and semialgebraic functions (mappings) are subanalytic \citep{Shiota1997-subanalytic}.
		
		\item[(2)] Let $f_1$ and $f_2$ are both subanalytic functions, then the sum of $f_1+f_2$ is a subanalytic function if at least one of them map a bounded set to a bounded set or if both of them are nonnegative \citep[p.43]{Shiota1997-subanalytic}.
	\end{enumerate}
\end{lemma}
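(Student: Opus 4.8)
The plan is to prove the two claims separately, both by reducing to the definition of subanalyticity via graphs. Recall that a set $A\subset\RR^m$ is \emph{semianalytic} if each of its points has a neighborhood on which $A$ is described by finitely many equalities and inequalities of real analytic functions, and $A$ is \emph{subanalytic} if it is locally the image of a relatively compact semianalytic set under a proper real analytic map; a function (mapping) is subanalytic if its graph is a subanalytic set. I will use freely the standard facts from \citet{Shiota1997-subanalytic} that every semianalytic set is subanalytic, that finite intersections and products of subanalytic sets are subanalytic, and that the image of a subanalytic set under a real analytic map is subanalytic provided the map is proper on the closure of that set.

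For claim (1), first let $f$ be real analytic on an open set. Its graph $\{(\bx, f(\bx))\}$ is cut out locally by the single analytic equation $y - f(\bx) = 0$, so it is semianalytic and hence subanalytic. For a semialgebraic function, its graph is a semialgebraic set by definition; since every polynomial is real analytic, a semialgebraic set is described by finitely many analytic equalities and inequalities, so it is semianalytic and a fortiori subanalytic. Arguing coordinatewise for mappings, both real analytic and semialgebraic functions (mappings) are therefore subanalytic.

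For claim (2), I would write the graph of $f_1 + f_2$ as the image of $G := \{(\bx, s, t) : s = f_1(\bx),\ t = f_2(\bx)\}$ under the linear (hence real analytic) map $\Sigma(\bx, s, t) := (\bx, s + t)$. The set $G$ is subanalytic, being the intersection of two subanalytic sets obtained by embedding the graphs of $f_1$ and $f_2$, so it remains only to verify that $\Sigma|_G$ is proper over compact sets. Fix a compact $K \times [a,b]$ in the target. Under the first hypothesis, say $f_1$ maps bounded sets to bounded sets, the coordinate $s = f_1(\bx)$ stays bounded for $\bx \in K$, and then $t = (s+t) - s$ is bounded because $s + t \in [a,b]$; hence $\Sigma^{-1}(K \times [a,b]) \cap G$ is bounded. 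Under the second hypothesis, $f_1, f_2 \geq 0$ forces $0 \le s \le b$ and $0 \le t \le b$ whenever $s + t \in [a,b]$, so again the preimage is bounded. In either case $\Sigma|_G$ is proper over compact sets, so $\mathrm{Graph}(f_1 + f_2) = \Sigma(G)$ is subanalytic, i.e. $f_1 + f_2$ is subanalytic.

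The main obstacle is claim (2): subanalyticity is preserved under real analytic maps only when a properness condition controls behavior at infinity, and the two stated hypotheses are precisely what force the fibers of the summation map over compact sets to remain bounded. Getting this properness argument right is the crux, whereas claim (1) is an essentially formal reduction to the semianalytic description of the graph.
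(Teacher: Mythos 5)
Your proof is correct, but there is nothing in the paper to compare it against in detail: the paper does not prove this lemma at all. It is stated as imported background, introduced with the remark that the claims ``can be found in \citet{Shiota1997-subanalytic} or derived directly from \citet{Shiota1997-subanalytic},'' and is then used as a black box in the proof of the \KL{} property of $\calL$ and $\bcL$. What you have done is reconstruct the standard argument behind that citation, and your reconstruction is sound: claim (1) is, as you say, essentially formal (an analytic graph is semianalytic, a semialgebraic set is semianalytic because polynomials are analytic, and the coordinatewise reduction for mappings works since the graph of a mapping is a finite intersection of cylinders over coordinate graphs); and for claim (2) you correctly identify the crux, namely that the two hypotheses of the lemma --- bounded-to-bounded, or joint nonnegativity --- are exactly what make the addition map $\Sigma(\bx,s,t)=(\bx,s+t)$ proper over compacts when restricted to the fiber product $G$, which is what the image theorem for subanalytic sets demands; without such a condition, cancellation at infinity can destroy subanalyticity of the sum, so the hypotheses are not decorative. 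One fine point you should tighten: the image theorem requires properness of $\Sigma$ restricted to the \emph{closure} of $G$, whereas you bound only $\Sigma^{-1}(K\times[a,b])\cap G$. This is easily repaired: your bounds hold uniformly after enlarging the compact to $K'\times[a-\varepsilon,b+\varepsilon]$, so any limit of points of $G$ mapping into $K\times[a,b]$ satisfies the same bounds, hence $\mathrm{cl}(G)\cap\Sigma^{-1}(K\times[a,b])$ is closed and bounded, i.e.\ compact, and properness on the closure follows. With that one-line addition your argument is a complete, self-contained proof of a statement the paper only cites --- which is arguably a service, since the lemma as phrased (especially the sum rule with its two alternative hypotheses) is assembled from scattered material in \citet{Shiota1997-subanalytic} rather than quoted verbatim.
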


Moreover, we still need the following important lemma from \citet{Bolte-KL2007a}, which shows that the subanalytic function is a \KL function.
\begin{lemma}[{\citealp[Theorem 3.1]{Bolte-KL2007a}}\,\!]
	\label{Lemm:subanalytic-kl}
	Let $h:\RR^p \rightarrow \RR\cup \{+\infty\}$ be a subanalytic function with closed domain, and assume that $h$ is continuous on its domain, then $h$ is a \KL function.
\end{lemma}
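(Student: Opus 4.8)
The plan is to prove the equivalent nonsmooth \L{}ojasiewicz \emph{gradient inequality}, from which the \KL property of Definition~\ref{Def-KLProp} follows by a direct algebraic rearrangement. Writing the desingularizing function as $\varphi(s)=cs^{1-\theta}$, so that $\varphi'(s)=c(1-\theta)s^{-\theta}$, the \KL inequality \eqref{KLIneq} at a point $\bar{x}\in\dom(\partial h)$ is equivalent to the existence of a neighborhood $U$, constants $\eta,C>0$ and an exponent $\theta\in[0,1)$ such that
\begin{equation*}
\mathrm{dist}(\zero,\partial h(x))\ \geq\ C\,\bigl(h(x)-h(\bar{x})\bigr)^{\theta}
\end{equation*}
for all $x\in U\cap\dom(\partial h)$ with $h(\bar{x})<h(x)<h(\bar{x})+\eta$. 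Hence it suffices to establish this gradient inequality; after translating we may assume $h(\bar{x})=0$, and we may assume $\bar{x}$ is not a local minimizer since otherwise the inequality is vacuous. The first key step is to show that the subgradient magnitude is itself a subanalytic function of $x$: because $h$ is subanalytic, its Fr\'echet subdifferential $\widehat{\partial}h$ is described pointwise by the first-order condition in \eqref{Def:limiting-subdifferential}, a formula built only from subanalytic data, so $\mathrm{Graph}(\widehat{\partial}h)$ is subanalytic; passing to the outer limit (a closure operation, which preserves subanalyticity on bounded sets, and here is where closedness of $\dom(h)$ is used) shows that $\mathrm{Graph}(\partial h)\subset\RR^p\times\RR^p$ is subanalytic. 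Projecting out the norm, the function $m_h(x):=\mathrm{dist}(\zero,\partial h(x))=\inf\{\|v\|:v\in\partial h(x)\}$ is then subanalytic with closed domain, being the infimal marginal of a subanalytic multifunction.

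The second step reduces the two-variable comparison of $h$ and $m_h$ to a one-variable estimate. Shrinking $U$ to a closed ball $\{\|x-\bar{x}\|\leq r\}$ and defining the \emph{valley function}
\begin{equation*}
\psi(t)\ :=\ \inf\{\,m_h(x):\|x-\bar{x}\|\leq r,\ h(x)=t\,\},\qquad t\in(0,\eta),
\end{equation*}
we observe that $\psi$ is the marginal of the subanalytic pair $(m_h,h)$, hence is a subanalytic function of the single real variable $t$ (here closedness of the level sets and continuity of $h$ guarantee the infimum is attained). By the Puiseux/monotonicity structure of one-variable subanalytic functions, for $t$ near $0^{+}$ either $\psi\equiv 0$ or $\psi(t)\sim\kappa\,t^{\mu}$ for some $\kappa>0$ and a rational $\mu\geq 0$. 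Since the desired gradient inequality is exactly $\psi(t)\geq Ct^{\theta}$, which holds with $\theta=\mu$ precisely when $\mu\in[0,1)$, the proof reduces to forcing $\mu<1$ and excluding the degenerate case $\psi\equiv 0$.

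This last point is the heart of the argument and is handled by a nonsmooth adaptation of Kurdyka's \emph{talweg} construction. Using the curve selection lemma for subanalytic sets, one builds a bounded subanalytic path $t\mapsto\gamma(t)$ lying on the level $\{h=t\}$ along which $m_h(\gamma(t))$ is comparable to its minimum $\psi(t)$, and follows the minimal-norm subgradient descent reparametrized by the function value, so that $\|\gamma'(t)\|\sim 1/m_h(\gamma(t))\sim 1/\psi(t)$. Because $\gamma$ is a bounded subanalytic curve it has finite length, whence
\begin{equation*}
\int_{0}\frac{dt}{\psi(t)}\ <\ \infty.
\end{equation*}
With $\psi(t)\sim\kappa\,t^{\mu}$ this integral converges if and only if $\mu<1$; in particular $\psi\equiv 0$ is excluded (its reciprocal integrand is infinite), and we obtain $\theta:=\mu\in[0,1)$ together with the gradient inequality, the desingularizing function being $\varphi(s)=\int_{0}^{s}d\tau/\psi(\tau)\sim c\,s^{1-\theta}$. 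The two genuinely technical obstacles are exactly the subanalyticity of $\mathrm{Graph}(\partial h)$ in the first step and the rigorous construction of the talweg with its length estimate in the last step: both require replacing the classical smooth gradient by the set-valued limiting subdifferential, and it is in making the steepest-descent length identity $\|\gamma'(t)\|\sim 1/m_h(\gamma(t))$ precise for set-valued subgradients that the main difficulty lies.
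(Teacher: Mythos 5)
You should first note what the paper actually does with this statement: it offers no proof at all. The lemma is imported verbatim, with citation, as Theorem 3.1 of \citet{Bolte-KL2007a}, and serves as a black box in the proof of \Cref{Thm:KL-property}. So your proposal must be judged against the cited proof rather than anything in the paper. Measured that way, your outline does reproduce the recognized architecture: rewrite \eqref{KLIneq} as the gradient inequality $\mathrm{dist}(\zero,\partial h(x))\geq C\bigl(h(x)-h(\bar{x})\bigr)^{\theta}$, establish subanalyticity of $\mathrm{Graph}(\partial h)$, reduce to one real variable via the valley function $\psi$, invoke the Puiseux structure of one-variable subanalytic functions, and force $\theta\in[0,1)$ through a finite-length talweg estimate. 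This is Kurdyka's strategy adapted to the limiting subdifferential, which is indeed the lineage of the Bolte--Daniilidis--Lewis result, so as a roadmap it is faithful.

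As a proof, however, it has one outright error and two deferred steps that are precisely where the theorem is hard. The error: your reduction ``we may assume $\bar{x}$ is not a local minimizer since otherwise the inequality is vacuous'' is backwards. At a local minimizer there are, in general, nearby points with $h(x)>h(\bar{x})$, and \eqref{KLIneq} must hold at all of them --- indeed local minimizers are exactly the points the convergence analysis of \Cref{Thm:BCD-ConvThm} cares about (for $h(x)=x^2$ at $\bar{x}=0$ the inequality is the classical {\L}ojasiewicz inequality, hardly vacuous). Vacuity occurs at local \emph{maximizers}, where the slab $h(\bar{x})<h(x)<h(\bar{x})+\eta$ misses a neighborhood; discarding local minimizers instead would gut the lemma. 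The deferred steps: (i) subanalyticity of $\mathrm{Graph}(\partial h)$ is not the one-liner ``built from subanalytic data,'' because the $\liminf$ defining $\widehat{\partial}h$ quantifies over all nearby $\by$ and subanalytic sets are not stable under projections along unbounded fibers; controlling this via local boundedness is a dedicated proposition in \citet{Bolte-KL2007a}, not a remark. (ii) The talweg step needs an absolutely continuous near-minimal selection $\gamma(t)$ in the level sets with $m_h(\gamma(t))\leq 2\psi(t)$ --- exact attainment can fail since $m_h$ need not be lower semicontinuous --- together with a nonsmooth chain rule yielding $1\leq \|\gamma'(t)\|\,\mathrm{dist}(\zero,\partial h(\gamma(t)))$ a.e., which for merely lower semicontinuous subanalytic $h$ is genuinely delicate; your heuristic $\|\gamma'(t)\|\sim 1/m_h(\gamma(t))$ only needs these one-sided bounds, but you state rather than establish them, and you say so yourself. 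With those two steps unexecuted, the proposal is a correct-in-outline sketch of the cited theorem rather than a proof; the paper's choice to cite \citet[Theorem 3.1]{Bolte-KL2007a} is the appropriate resolution.
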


%\begin{Lemm}
%\label{Lemm:subanalytic}
%Let $X \subset \RR^r, Y \subset \RR^p$ and $Z\subset \RR^q$ be suanalytic sets, let $f_1$ and $f_2$ be subanalytic functions on $X$, and let $f:X\rightarrow Y$ and $g:Y\rightarrow Z$ be subanalytic mappings. The following hold
%\begin{enumerate}
%\item[(1)]
%If $f_1$ and $f_2$ satisfy the \textbf{boundedness} condition, that for any bounded set $B\subset \RR^r$, the images $f_1(X\cap B)$ and $f_2(X\cap B)$ are bounded, then the product $f_1f_2$ is a subanalytic function.
%\item[(2)]
%If at least one of $f_1$ and $f_2$ satisfies the \textbf{boundedness} condition in claim (1) or if $f_1$ and $f_2$ are non-negative, the sum $f_1 + f_2$ is of subanalytic class.
%\item[(3)]
%If $f$ satisfies the \textbf{boundedness} condition in claim (1) or if $g$ satisfies that for any bounded set $C\subset \RR^q$, $g^{-1}(C)$ is bounded, then the composite $g\circ f$ is a subanalytic map.
%\end{enumerate}
%\end{Lemm}

%Based on \Cref{Claim:KL}, we prove \Cref{Thm:KL-property}.

\begin{proof}[Proof of \Cref{Thm:KL-property}]
	%\textbf{(Proof of \Cref{Thm:KL-property}):}
	%\begin{proof}
	We first verify the \KL property of $\overline{\calL}$, and then similarly show that of ${\cal L}$.
%, and finally check some special cases arising in deep learning.
From \eqref{Eq:3-split-min},
\begin{multline*}
{\bcL}(\{\bW_i\}_{i=1}^N,\{\bV_i\}_{i=1}^N,\{\bU_i\}_{i=1}^N) \\
:= \calR_n(\bV_N; \bY) + \sum_{i=1}^N r_i(\bW_i) + \sum_{i=1}^N s_i(\bV_i)  + \frac{\gamma}{2}\sum_{i=1}^N \left[ \|\bV_i - \sigma_i(\bU_i)\|_F^2 + \|\bU_i - \bW_i \bV_{i-1}\|_F^2 \right],
\end{multline*}
which mainly includes the following types of functions, i.e.,
\begin{align*}
	\calR_n (\bV_N; \bY), \ r_i(\bW_i), \ s_i(\bV_i), \  \|\bV_i - \sigma_i(\bU_i)\|_F^2,\  \|\bU_i - \bW_i \bV_{i-1}\|_F^2.
\end{align*}
To verify the \KL property of the function $\bcL$, we consider the above functions one by one under the hypothesis of \Cref{Thm:KL-property}.
	
\textbf{On $\calR_n(\bV_N; \bY)$:}
Note that given the output data $\bY$, $\calR_n(\bV_N; \bY):= \frac{1}{n} \sum_{j=1}^n \ell((\bV_N)_{:j}, \by_j)$, where $\ell: \RR^{d_N} \times \RR^{d_N} \rightarrow \RR_{+}\cup \{0\}$ is some loss function.
If $\ell$ is real analytic (resp.~semialgebraic), then by \Cref{Lemm:real-analytic} (resp.~\Cref{Lemm:semialgbraic}(3)), $\calR_n(\bV_N; \bY)$ is real-analytic (resp.~semialgebraic).
	
\textbf{On $\|\bV_i - \sigma_i(\bU_i)\|_F^2$:}
Note that $\|\bV_i - \sigma_i(\bU_i)\|_F^2$ is a finite sum of simple functions of the form, $|v-\sigma_i(u)|^2$ for any $u,v \in \RR$. If $\sigma_i$ is real analytic (resp.~semialgebraic), then $v-\sigma_i(u)$ is real analytic (resp.~semialgebraic), and further by \Cref{Lemm:real-analytic} (resp.~\Cref{Lemm:semialgbraic}(2)), $|v-\sigma_i(u)|^2$ is also real analytic (resp.~semialgebraic) since $|v-\sigma_i(u)|^2$ can be viewed as the composition $g\circ h$ of these two functions where $g(t)=t^2$ and $h(u,v) = v-\sigma_i(u)$.
	
\textbf{On $\|\bU_i - \bW_i \bV_{i-1}\|_F^2$: }
Note that the function $\|\bU_i - \bW_i \bV_{i-1}\|_F^2$ is a polynomial function with the variables $\bU_i, \bW_i$ and $\bV_{i-1}$, and thus according to \citet{Krantz2002-real-analytic} and \citet{Bochnak-semialgebraic1998}, it is both real analytic and semialgebraic.
	
\textbf{On $r_i(\bW_i)$, $s_i(\bV_i)$:}
All $r_i$'s and $s_i$'s are real analytic or semialgebraic by the hypothesis of \Cref{Thm:KL-property}.
	
Since each part of the function $\overline{\calL}$ is either real analytic or semialgebraic, then by \Cref{Lemm:real-semialg-subanalytic}, $\overline{\calL}$ is a subanalytic function. Furthermore, by the continuity hypothesis of \Cref{Thm:KL-property}, $\overline{\calL}$ is continuous in its domain. Therefore, $\overline{\calL}$ is a \KL function according to \Cref{Lemm:subanalytic-kl}.
	
Similarly, we can verify the \KL property of $\calL$ by checking each part is either real analytic or semialgebraic.
The major task is to check the \KL properties of the functions $\|\bV_i - \sigma_i(\bW_i \bV_{i-1})\|_F^2$ ($i=1,\ldots,N$). This reduces to check the function $h: \RR\times \RR^{d_{i-1}}\times \RR^{d_{i-1}} \rightarrow \RR$, $h(u, \bv, \bw):= |u-\sigma_i(\left\langle \bw, \bv\right\rangle )|^2$. Similar to the case $|v-\sigma_i(u)|^2$ for any $u,v \in \RR$ in $\bcL$, $h$ is real analytic (resp.~semialgebraic) if $\sigma_i$ is real analytic (resp.~semialgebraic) by  \Cref{Lemm:real-analytic} (resp.~\Cref{Lemm:semialgbraic}(2)).
As a consequence, each part of the function $\calL$ is either real analytic or semialgebraic, so ${\cal L}$ is a subanalytic function, and further by the continuity hypothesis of \Cref{Thm:KL-property}, ${\cal L}$ is a \KL function according to \Cref{Lemm:subanalytic-kl}.
This completes the proof. \hfill$\Box$

\end{proof}

%\section{Global convergence of BCD for generic DNN training}
%\label{sc:BCD-convergence-genericDNN}

%In the following, we establish the global convergence of both
%\Cref{alg:BCD-2-split} for Problem \eqref{Eq:2-split-min} and \Cref{alg:BCD-3-split} for Problem \eqref{Eq:3-split-min}.

\subsection{Value convergence of BCD}

We show the value convergence of both algorithms as follows.

\begin{theorem}
%[Value convergence]\hfill
\label{Thm:BCD-ConvThm-value}
Let $\{{\cal Q}^k:=\left(\{\bW_i^k\}_{i=1}^N, \{\bV_i^k\}_{i=1}^N\right)\}_{k\in\NN}$ and $\{{\cal P}^k:=\left(\{\bW_i^k\}_{i=1}^N, \{\bV_i^k\}_{i=1}^N, \{\bU_i^k\}_{i=1}^N\right)\}_{k\in\NN}$ be the sequences generated by \Cref{alg:BCD-2-split,alg:BCD-3-split}, respectively. Under \Cref{Assumption-model} and finite initializations ${\cal Q}^0$ and ${\cal P}^0$, then for any positive $\alpha$ and $\gamma$, $\{{\cal L}({\cal Q}^k)\}_{k\in\NN}$ (resp.~$\{\bcL({\cal P}^k)\}_{k\in\NN}$) is nonincreasing and converges to some finite ${\cal L}^*$ (resp.~$\overline{\calL}^*$).
\end{theorem}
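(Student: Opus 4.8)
The plan is to obtain this theorem almost immediately from the sufficient descent property together with a uniform lower bound on the objective, then invoke the classical monotone convergence theorem. I focus on the three-splitting sequence $\{\calP^k\}$ and $\bcL$; the two-splitting case for $\{\calQ^k\}$ and $\calL$ is entirely parallel, using the analogue of \Cref{Lemm:BCD-suff-desc} for \Cref{alg:BCD-2-split} (obtained by the same proximal-update argument, now without the $\bU_i$-blocks).

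First I would establish that $\bcL$ is bounded below by zero on its domain. From the definition \eqref{Eq:3-split-min},
\[
\bcL(\calW,\calV,\calU) = \calR_n(\bV_N;\bY) + \sum_{i=1}^N r_i(\bW_i) + \sum_{i=1}^N s_i(\bV_i) + \frac{\gamma}{2}\sum_{i=1}^N\left[\|\bV_i - \sigma_i(\bU_i)\|_F^2 + \|\bU_i - \bW_i\bV_{i-1}\|_F^2\right],
\]
and every summand is nonnegative: $\calR_n \geq 0$ because $\ell \geq 0$ by \Cref{Assumption-model}(a); $r_i, s_i \geq 0$ by \Cref{Assumption-model}(c); and the two penalty terms are squared Frobenius norms. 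Hence $\bcL \geq 0$ everywhere, and the identical decomposition shows $\calL \geq 0$ in the two-splitting case.

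Next I would invoke \Cref{Lemm:BCD-suff-desc}, which gives, for every $k \geq 1$,
\[
\bcL(\calP^k) \leq \bcL(\calP^{k-1}) - a\|\calP^k - \calP^{k-1}\|_F^2 \leq \bcL(\calP^{k-1}),
\]
since $a > 0$ and the Frobenius term is nonnegative; thus $\{\bcL(\calP^k)\}_{k\in\NN}$ is nonincreasing. The finite initialization $\calP^0$ together with the argmin structure of the first sweep guarantees $\bcL(\calP^1) < \infty$, so $0 \leq \bcL(\calP^k) \leq \bcL(\calP^1) < \infty$ for all $k \geq 1$. A nonincreasing real sequence bounded below converges to its infimum, which is finite and nonnegative; I would name this limit $\bcL^*$. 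Applying the same reasoning to $\calL$ and $\{\calQ^k\}$ (via the two-splitting descent analogue) yields the finite limit $\calL^*$, completing the argument.

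As for the main obstacle: there is essentially none at this stage, because the substantive analytic work has already been absorbed into \Cref{Lemm:BCD-suff-desc}. The only points requiring care are (i) verifying the nonnegativity of each component of $\bcL$ and $\calL$ so that zero is a valid lower bound, and (ii) ensuring that the iterates lie in $\dom(\bcL)$ after the first full sweep—which holds because each subproblem is solved by an $\argmin$ over the corresponding (proper, lower semicontinuous) block objective—so that the descent inequality propagates finiteness through the whole sequence. Both are routine consequences of \Cref{Assumption-model}.
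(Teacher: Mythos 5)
Your proposal is correct and matches the paper's own argument: the paper likewise derives monotonicity from the sufficient descent inequality \eqref{Eq:BCD-suff-desc}, notes that every term of $\bcL$ (resp.\ $\calL$) is nonnegative so the sequence is bounded below by zero, and concludes convergence to a finite limit, handling the two-splitting case analogously. The extra care you take about finiteness after the first sweep is a reasonable (if routine) addition that the paper leaves implicit.
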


In order to prove \Cref{Thm:BCD-ConvThm-value}, we first show the convergence of \Cref{alg:BCD-3-split} and then show that of \Cref{alg:BCD-2-split} similarly. We restate \Cref{Lemm:BCD-suff-desc} precisely as follows.

\begin{lemma}[Restate of \Cref{Lemm:BCD-suff-desc}]
	\label{Lemm:BCD-suff-desc1}
	Let $\{{\cal P}^k\}_{k\in\NN}$ be a sequence generated by the BCD method (\Cref{alg:BCD-3-split}),
	then
	\begin{align}
	\label{Eq:BCD-suff-desc}
	\bcL({\cal P}^k)  \leq \overline{\calL}(\calP^{k-1}) - a\|\calP^k - \calP^{k-1}\|_F^2,
	\end{align}
	where
	\begin{align}
	\label{Eq:def-a}
	a:=\min\left\{\frac{\alpha}{2},\frac{\gamma}{2}\right\}
	\end{align}
	for the case that $\bV_N$ is updated via the proximal strategy, or
	\begin{align}
	\label{Eq:def-a1}
	a:=\min\left\{\frac{\alpha}{2},\frac{\gamma}{2}, \alpha + \frac{\gamma-L_R}{2}\right\}
	\end{align}
	for the case that $\bV_N$ is update via the prox-linear strategy.
\end{lemma}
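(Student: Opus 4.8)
The plan is to decompose one full sweep of \Cref{alg:BCD-3-split} into its $3N$ individual block updates, extract from each a descent controlled by the squared increment of the block it updates, and then telescope over the sweep. The essential observation is that every subproblem minimized in \Cref{alg:BCD-3-split} is exactly the restriction of $\bcL$ to the block being updated, with all other blocks frozen at their most recently updated values. This uses $\sigma_N\equiv\Id$ crucially: the $\bV_N$- and $\bU_N$-subproblems then coincide with the $\bV_N$- and $\bU_N$-dependent parts of $\bcL$, since $\|\bV_N-\sigma_N(\bU_N)\|_F^2=\|\bV_N-\bU_N\|_F^2$. Consequently, updating a single block changes $\bcL$ by exactly the decrease attained in that block's subproblem, and summing these decreases gives $\bcL(\calP^{k-1})-\bcL(\calP^k)$.

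First I would record the elementary \emph{proximal descent} inequality: if $x^+=\argmin_x\{g(x)+\tfrac{\beta}{2}\|x-x^-\|_F^2\}$ with the minimizer attained (as assumed in \Cref{alg:BCD-3-split}), then comparing the objective at $x^+$ with its value at $x^-$ gives $g(x^+)\le g(x^-)-\tfrac{\beta}{2}\|x^+-x^-\|_F^2$, with \emph{no convexity of $g$ required}. This applies verbatim to each block carrying an $\tfrac{\alpha}{2}\|\cdot-\cdot^{k-1}\|_F^2$ proximal term, namely $\bV_N$ (proximal version), $\bW_N$, and $\bU_i,\bW_i$ for $i=1,\dots,N-1$, yielding for each a decrease of at least $\tfrac{\alpha}{2}$ times its squared increment. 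This is exactly where the nonconvex and nonsmooth couplings (such as $\calR_n$ or $\|\bV_i-\sigma_i(\bU_i)\|_F^2$ with ReLU) are disposed of, as convexity is never invoked for these blocks.

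The two blocks minimized exactly \emph{without} a proximal term, namely $\bU_N$ and $\bV_i$ for $i<N$, are instead handled by strong convexity. The $\bU_N$-subproblem $\tfrac{\gamma}{2}\|\bV_N^k-\bU_N\|_F^2+\tfrac{\gamma}{2}\|\bU_N-\bW_N^{k-1}\bV_{N-1}^{k-1}\|_F^2$ is $2\gamma$-strongly convex, giving a decrease of at least $\gamma\|\bU_N^k-\bU_N^{k-1}\|_F^2$; the $\bV_i$-subproblem for $i<N$ inherits $\gamma$-strong convexity from $\tfrac{\gamma}{2}\|\bV_i-\sigma_i(\bU_i^{k-1})\|_F^2$ (with $s_i$ and the remaining quadratic convex by \Cref{Assumption-model}(c)), giving a decrease of at least $\tfrac{\gamma}{2}\|\bV_i^k-\bV_i^{k-1}\|_F^2$. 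Each of the $3N$ coefficients is therefore at least $\min\{\tfrac{\alpha}{2},\tfrac{\gamma}{2}\}$, and since $\|\calP^k-\calP^{k-1}\|_F^2=\sum_{i=1}^N(\|\bV_i^k-\bV_i^{k-1}\|_F^2+\|\bU_i^k-\bU_i^{k-1}\|_F^2+\|\bW_i^k-\bW_i^{k-1}\|_F^2)$, telescoping the per-block decreases yields \eqref{Eq:BCD-suff-desc} with $a=\min\{\tfrac{\alpha}{2},\tfrac{\gamma}{2}\}$, i.e.\ \eqref{Eq:def-a}.

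For the prox-linear $\bV_N$-update I would set $g(\bV_N)=s_N(\bV_N)+\tfrac{\gamma}{2}\|\bV_N-\bU_N^{k-1}\|_F^2$, which is $\gamma$-strongly convex, so the linearized objective $F(\bV_N)=g(\bV_N)+\langle\nabla\calR_n(\bV_N^{k-1};\bY),\bV_N-\bV_N^{k-1}\rangle+\tfrac{\alpha}{2}\|\bV_N-\bV_N^{k-1}\|_F^2$ is $(\gamma+\alpha)$-strongly convex with minimizer $\bV_N^k$. Combining the strong-convexity bound $F(\bV_N^{k-1})-F(\bV_N^k)\ge\tfrac{\gamma+\alpha}{2}\|\bV_N^k-\bV_N^{k-1}\|_F^2$ with the descent lemma $\calR_n(\bV_N^k)\le\calR_n(\bV_N^{k-1})+\langle\nabla\calR_n(\bV_N^{k-1}),\bV_N^k-\bV_N^{k-1}\rangle+\tfrac{L_R}{2}\|\bV_N^k-\bV_N^{k-1}\|_F^2$, the linear terms cancel and the decrease in the $\bV_N$-part of $\bcL$ is at least $(\alpha+\tfrac{\gamma-L_R}{2})\|\bV_N^k-\bV_N^{k-1}\|_F^2$, replacing the $\bV_N$-coefficient and producing $a=\min\{\tfrac{\alpha}{2},\tfrac{\gamma}{2},\alpha+\tfrac{\gamma-L_R}{2}\}$ as in \eqref{Eq:def-a1}. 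I expect this last accounting to be the main obstacle: one must combine the explicit $\tfrac{\alpha}{2}$ proximal term with the full $(\gamma+\alpha)$ strong-convexity modulus without double counting (a quadratic instance confirms the resulting bound is tight), and check that the threshold $\alpha>\max\{0,(L_R-\gamma)/2\}$ from \Cref{Coro:globalconv-prox-linear1} is precisely what renders this coefficient positive.
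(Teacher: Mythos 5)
Your proposal is correct and follows essentially the same route as the paper's proof: the same split of the $3N$ block updates into proximal-descent blocks (handled by comparing the augmented objective at $\bV^{k-1}$ vs.\ $\bV^k$, no convexity needed) and exactly-minimized strongly convex blocks ($\bU_N$ with modulus $2\gamma$, $\bV_i$ for $i<N$ with modulus $\gamma$), and the same combination of the $(\alpha+\gamma)$-strong convexity of the linearized subproblem with the descent lemma to obtain the coefficient $\alpha+\tfrac{\gamma-L_R}{2}$ in the prox-linear case. The only cosmetic difference is that the paper credits the $\bU_i$ ($i<N$) blocks with a slightly larger coefficient $\tfrac{\alpha+\gamma}{2}$, which is immaterial since the final constant is the minimum over all blocks.
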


According to \Cref{alg:BCD-3-split}, the decreasing property of the sequence $\{\bcL(\calP^k)\}_{k\in\NN}$ is obvious. However, establishing the sufficient descent inequality \eqref{Eq:BCD-suff-desc} for the sequence $\{\bcL(\calP^k)\}_{k\in\NN}$ is nontrivial. To achieve this, we should take advantage of the specific update strategies and also the form of $\bcL$ as shown in the following proofs.

\begin{proof}
	The descent quantity in \eqref{Eq:BCD-suff-desc} can be developed via considering the descent quantity along the update of each block variable.
	From \Cref{alg:BCD-3-split}, each block variable is updated either by the proximal strategy with parameter $\alpha/2$ (say, updates of $\bV_N^k, \{\bU_i^k\}_{i=1}^{N-1}, \{\bW_i^k\}_{i=1}^{N}$-blocks in \Cref{alg:BCD-3-split}) or by minimizing a strongly convex function\footnote{The function $h$ is called a strongly convex function with parameter $\gamma>0$  if $h(u) \geq h(v) + \langle \nabla h(v), u-v\rangle + \frac{\gamma}{2} \|u-v\|^2$.} with parameter $\gamma>0$ (say, updates of $\{\bV_i^k\}_{i=1}^{N-1}, \bU_N^k$-blocks in \Cref{alg:BCD-3-split}),
	we will consider both cases one by one.
	
	\textbf{(a) Proximal update case:}
	In this case, we take the $\bW_i^k$-update case for example.
	By \Cref{alg:BCD-3-split}, $\bW_i^k$ is updated according to the following
	\begin{equation}
	\label{Eq:Wi-update} \bW_i^k \leftarrow \argmin_{\bW_i} \left\{r_i(\bW_i) + \frac{\gamma}{2} \|\bU_i^k - \bW_i \bV_{i-1}^{k-1}\|_F^2 + \frac{\alpha}{2} \|\bW_i - \bW_i^{k-1}\|_F^2 \right\}.
	\end{equation}
	Let $h^k(\bW_i) = r_i(\bW_i) + \frac{\gamma}{2}\|\bU_i^k - \bW_i \bV_{i-1}^{k-1}\|_F^2$ and $\bar{h}^k(\bW_i)= r_i(\bW_i) + \frac{\gamma}{2}\|\bU_i^k - \bW_i \bV_{i-1}^{k-1}\|_F^2 + \frac{\alpha}{2}\|\bW_i-\bW_i^{k-1}\|_F^2$.
	By the optimality of $\bW_i^k$, the following holds
	\begin{align*}
	\bar{h}^k(\bW_i^{k-1}) \geq \bar{h}^k(\bW_i^k),
	\end{align*}
	which implies
	\begin{align}
	\label{Eq:descent-Wi-h}
	h^k(\bW_i^{k-1}) \geq h^k(\bW_i^k) + \frac{\alpha}{2} \|\bW_i^k - \bW_i^{k-1}\|_F^2.
	\end{align}
	Note that the $\bW_i^k$-update \eqref{Eq:Wi-update} is equivalent to the following original proximal BCD update, i.e.,
	\begin{align*}
	\bW_i^k \leftarrow \argmin_{\bW_i} \overline{\calL}(\bW_{<i}^{k-1}, \bW_i, \bW_{>i}^k, \bV_{<i}^{k-1}, \bV_i^k, \bV_{>i}^k, \bU_{<i}^{k-1}, \bU_i^k, \bU_{>i}^k) + \frac{\alpha}{2}\|\bW_i^k - \bW_i^{k-1}\|_F^2,
	\end{align*}
	where $\bW_{<i}:= (\bW_1, \bW_2, \ldots, \bW_{i-1})$, $\bW_{>i}:= (\bW_{i+1}, \bW_{i+1}, \ldots, \bW_N)$, and $\bV_{<i}, \bV_{>i}, \bU_{<i}, \bU_{>i}$ are defined similarly.
	Thus, by \eqref{Eq:descent-Wi-h}, we establish the descent part along the $\bW_i$-update ($i=1,\ldots, N-1$), i.e.,
	\begin{multline}
	\label{Eq:descent-Wi}
	\overline{\calL}(\bW_{<i}^{k-1}, {\color{red} \bW_i^{k-1}}, \bW_{>i}^k, \bV_{<i}^{k-1}, \bV_i^k, \bV_{>i}^k, \bU_{<i}^{k-1}, \bU_i^k, \bU_{>i}^k) \\
	\geq \overline{\calL}(\bW_{<i}^{k-1}, {\color{red} \bW_i^k}, \bW_{>i}^k, \bV_{<i}^{k-1}, \bV_i^k, \bV_{>i}^k, \bU_{<i}^{k-1}, \bU_i^k, \bU_{>i}^k)  + \frac{\alpha}{2} \|\bW_i^k - \bW_i^{k-1}\|_F^2.
	\end{multline}
	Similarly, we can establish the similar descent estimates of \eqref{Eq:descent-Wi} for the other blocks using the proximal updates including $\bV_N^k, \{\bU_i^k\}_{i=1}^{N-1}$ and $\bW_N^k$ blocks.
	
	Specifically, for the $\bV_N^k$-block, the following holds
	\begin{equation}
	\label{Eq:descent-VN}
	\overline{\calL}(\{\bW_{i}^{k-1}\}_{i=1}^N, \bV_{i<N}^{k-1}, {\color{red} \bV_N^{k-1}}, \{\bU_{i}^{k-1}\}_{i=1}^N)
	\geq \overline{\calL}(\{\bW_{i}^{k-1}\}_{i=1}^N, \bV_{i<N}^{k-1}, {\color{red} \bV_N^{k}}, \{\bU_{i}^{k-1}\}_{i=1}^N)  + \frac{\alpha}{2} \|\bV_N^k - \bV_N^{k-1}\|_F^2.
	\end{equation}
	For the $\{\bU_i^k\}$-block, $i=1,\ldots, N-1$, the following holds
	\begin{multline}
	\label{Eq:descent-Ui}
	\overline{\calL}(\bW_{<i}^{k-1}, \bW_i^{k-1}, \bW_{>i}^k, \bV_{<i}^{k-1}, \bV_i^k, \bV_{>i}^k, \bU_{<i}^{k-1}, {\color{red} \bU_i^{k-1}}, \bU_{>i}^k) \\
	\geq \overline{\calL}(\bW_{<i}^{k-1}, \bW_i^{k-1}, \bW_{>i}^k, \bV_{<i}^{k-1}, \bV_i^k, \bV_{>i}^k, \bU_{<i}^{k-1}, {\color{red} \bU_i^k}, \bU_{>i}^k)  + \frac{\alpha+\gamma}{2} \|\bU_i^k - \bU_i^{k-1}\|_F^2.
	\end{multline}
	For the $\bW_N^k$-block, the following holds
	\begin{equation}
	\label{Eq:descent-WN}
	\overline{\calL}(\bW_{i<N}^{k-1}, {\color{red} \bW_N^{k-1}}, \bV^{k-1}_{i<N}, \bV_N^k, \bU^{k-1}_{i<N}, \bU_N^{k}) \geq \overline{\calL}(\bW_{i<N}^{k-1}, {\color{red} \bW_N^{k}}, \bV^{k-1}_{i<N}, \bV_N^k, \bU^{k-1}_{i<N}, \bU_N^{k})  + \frac{\alpha}{2} \|\bW_N^k - \bW_N^{k-1}\|_F^2.
	\end{equation}
	
	\textbf{(b) Minimization of a strongly convex case:}
	In this case, we take $\bV_i^k$-update case for example.
	From \Cref{alg:BCD-3-split}, $\bV_i^k$ is updated according to the following
	\begin{align}
	\label{Eq:Vi-update}
	\bV_i^k \leftarrow \argmin_{\bV_i} \ \left\{s_i(\bV_i) + \frac{\gamma}{2} \|\bV_i - \sigma_i(\bU_i^{k-1})\|_F^2 + \frac{\gamma}{2}\|\bU_{i+1}^k - \bW_{i+1}^k \bV_i\|_F^2 \right\}.
	\end{align}
	Let $h^k(\bV_i) = s_i(\bV_i) + \frac{\gamma}{2} \|\bV_i - \sigma_i(\bU_i^{k-1})\|_F^2 + \frac{\gamma}{2}\|\bU_{i+1}^k - \bW_{i+1}^k \bV_i\|_F^2$. By the convexity of $s_i$, the function $h^k(\bV_i)$ is a strongly convex function with modulus no less than $\gamma$.
	By the optimality of $\bV_i^k$, the following holds
	\begin{align}
	\label{Eq:descent-Vi-h}
	h^k(\bV_i^{k-1}) \geq h^k(\bV_i^k) + \frac{\gamma}{2}\|\bV_i^k - \bV_i^{k-1}\|_F^2.
	\end{align}
	Noting the relation between $h^k(\bV_i)$ and $\overline{\calL}(\bW_{<i}^{k-1}, \bW_i^{k-1}, \bW_{>i}^k, \bV_{<i}^{k-1}, \bV_i, \bV_{>i}^k, \bU_{<i}^{k-1}, \bU_i^{k-1}, \bU_{>i}^k)$, and by \eqref{Eq:descent-Vi-h}, it yields for $i=1,\ldots,N-1$,
	\begin{multline}
	\label{Eq:descent-Vi}
	\overline{\calL}(\bW_{<i}^{k-1}, \bW_i^{k-1}, \bW_{>i}^k, \bV_{<i}^{k-1}, {\color{red} \bV_i^{k-1}}, \bV_{>i}^k, \bU_{<i}^{k-1}, \bU_i^{k-1}, \bU_{>i}^k) \\
	\geq \overline{\calL}(\bW_{<i}^{k-1}, \bW_i^{k-1}, \bW_{>i}^k, \bV_{<i}^{k-1}, {\color{red} \bV_i^k}, \bV_{>i}^k, \bU_{<i}^{k-1}, \bU_i^{k-1}, \bU_{>i}^k)  + \frac{\gamma}{2} \|\bV_i^k - \bV_i^{k-1}\|_F^2.
	\end{multline}
	Similarly, we can establish the similar descent estimates for the $\bU_N^k$-block, i.e.,
	\begin{equation}
	\label{Eq:descent-UN}
	\overline{\calL}(\bW_{<N}^{k-1}, \bW_N^{k-1}, \bV_{<N}^{k-1}, \bV_N^{k}, \bU_{<N}^{k-1}, {\color{red} \bU_N^{k-1}})
	\geq \overline{\calL}(\bW_{<N}^{k-1}, \bW_N^{k-1}, \bV_{<N}^{k-1}, \bV_N^{k}, \bU_{<N}^{k-1}, {\color{red} \bU_N^{k}})  + \gamma \|\bU_N^k - \bU_N^{k-1}\|_F^2.
	\end{equation}
	Summing \eqref{Eq:descent-Wi}--\eqref{Eq:descent-WN} and \eqref{Eq:descent-Vi}--\eqref{Eq:descent-UN} yields the descent inequality \eqref{Eq:BCD-suff-desc}.
	
	\textbf{(c) Prox-linear case for $\bV_N$, i.e., \eqref{Eq:VN-alg2}:} From \eqref{Eq:VN-alg2}, similarly, we let
	$h^k(\bV_N):=s_N(\bV_N)+\calR_n(\bV_N; \bY)+\frac{\gamma}{2}\|\bV_N - \bU_N^{k-1}\|_F^2$ and $\bar{h}^k(\bV_N) = s_N(\bV_N) + \calR_n(\bV_N^{k-1}; \bY)+\langle \nabla \calR_n(\bV_N^{k-1}; \bY), \bV_N-\bV_N^{k-1}\rangle + \frac{\alpha}{2}\|\bV_N-\bV_N^{k-1}\|_F^2 + \frac{\gamma}{2}\|\bV_N-\bU_N^{k-1}\|_F^2$.
	By the optimality of $\bV_N^k$ and the strong convexity of $\bar{h}^k(\bV_N)$ with modulus at least $\alpha + \gamma$, the following holds
	\[
	\bar{h}^k(\bV_N^{k-1}) \geq \bar{h}^k(\bV_N^{k}) + \frac{\alpha + \gamma}{2} \|\bV_N^{k}-\bV_N^{k-1}\|_F^2.
	\]
	After some simplifications and noting the relation between $h^k(\bV_N)$ and $\bar{h}^k(\bV_N)$, we have
	\begin{multline}
	h^k(\bV_N^{k-1})
	\geq h^k(\bV_N^k) - \left(\calR_n(\bV_N^k; \bY)- \calR_n(\bV_N^{k-1}; \bY) - \langle \nabla \calR_n(\bV_N^{k-1}; \bY), \bV_N^k - \bV_N^{k-1} \rangle\right)
	\\
	+\left( \alpha_N + \frac{\gamma}{2} \right)\|\bV_N^k - \bV_N^{k-1}\|_F^2 \geq h^k(\bV_N^k) + \left( \alpha + \frac{\gamma-L_R}{2}\right) \|\bV_N^k - \bV_N^{k-1}\|_F^2, \label{Eq:descent-VN-2}
	\end{multline}
	where the last inequality holds for the $L_R$-Lipschitz continuity of $\nabla \calR_n$, i.e., the following inequality by \citet{Nesterov-cvxopt-2018},
	\[
	\calR_n(\bV_N^k; \bY) \leq  \calR_n(\bV_N^{k-1}; \bY) + \langle \nabla \calR_n(\bV_N^{k-1}; \bY), \bV_N^k - \bV_N^{k-1} \rangle  + \frac{L_R}{2}\|\bV_N^k - \bV_N^{k-1}\|_F^2.
	\]
	
	Summing \eqref{Eq:descent-Wi}--\eqref{Eq:descent-WN}, \eqref{Eq:descent-Vi}--\eqref{Eq:descent-UN} and \eqref{Eq:descent-VN} yields the descent inequality \eqref{Eq:BCD-suff-desc}.
	%Summing all these descent estimates yields the descent inequality \eqref{Eq:BCD-suff-desc}.
	\hfill$\Box$
\end{proof}

\begin{proof}[Proof of \Cref{Thm:BCD-ConvThm-value}]
	By \eqref{Eq:BCD-suff-desc}, $\bcL(\calP^k)$ is monotonically nonincreasing and lower bounded by $0$ since each term of $\bcL$ is nonnegative, thus, $\bcL(\calP^k)$ converges to some nonnegative, finite $\bcL^*$. Similarly, we can show the claims in \Cref{Thm:BCD-ConvThm-value} holds for \Cref{alg:BCD-2-split}. \hfill$\Box$
\end{proof}

Based on \Cref{Lemm:BCD-suff-desc1}, we can obtain the following corollary.
\begin{corollary}[Square summable]
	\label{Coro:square-summable}
	The following hold:
	\begin{enumerate}
		\item[(a)] $\sum_{k=1}^{\infty} \|{\calP}^k - {\calP}^{k-1}\|_F^{2} <\infty $,
        \item[(b)] $\frac{1}{K}\sum_{k=1}^{K} \|{\calP}^k - {\calP}^{k-1}\|_F^{2} \rightarrow 0$ at the rate of ${\cal O}(1/K)$, and
		\item[(c)] $\|{\calP}^k - {\calP}^{k-1}\|_F \to 0$ as $k\to \infty,$
	\end{enumerate}
\end{corollary}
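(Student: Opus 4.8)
The plan is to derive all three claims directly from the telescoping structure of the sufficient descent inequality in \Cref{Lemm:BCD-suff-desc1}, combined with the lower boundedness of $\bcL$ established in \Cref{Thm:BCD-ConvThm-value}. First I would rewrite the sufficient descent inequality $\bcL(\calP^k) \leq \bcL(\calP^{k-1}) - a\|\calP^k - \calP^{k-1}\|_F^2$ in the equivalent form
\begin{align*}
a\|\calP^k - \calP^{k-1}\|_F^2 \leq \bcL(\calP^{k-1}) - \bcL(\calP^k),
\end{align*}
which isolates the squared increment on the left against a single-step decrement of the objective on the right.

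Next I would sum this over $k = 1,\ldots,K$, so that the right-hand side telescopes to give
\begin{align*}
a\sum_{k=1}^K \|\calP^k - \calP^{k-1}\|_F^2 \leq \bcL(\calP^0) - \bcL(\calP^K) \leq \bcL(\calP^0),
\end{align*}
where the final bound uses $\bcL(\calP^K) \geq 0$, since every term of $\bcL$ is nonnegative. Because $a>0$ (with the explicit constant \eqref{Eq:def-a} or \eqref{Eq:def-a1}) and the right-hand bound $\bcL(\calP^0)$ is finite for any finite initialization, letting $K\to\infty$ and dividing by $a$ yields claim (a). For claim (b) I would instead keep $K$ finite and divide the telescoped bound by $aK$; replacing $\bcL(\calP^K)$ by its limit $\bcL^*$ from \Cref{Thm:BCD-ConvThm-value} and using that $\{\bcL(\calP^k)\}$ is nonincreasing (so $\bcL(\calP^K)\geq\bcL^*$) gives $\frac{1}{K}\sum_{k=1}^K \|\calP^k - \calP^{k-1}\|_F^2 \leq \frac{\bcL(\calP^0) - \bcL^*}{aK}$, which is exactly the asserted $\calO(1/K)$ rate. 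Finally, claim (c) is immediate: convergence of the series in (a) forces its general term $\|\calP^k - \calP^{k-1}\|_F^2 \to 0$, and taking square roots gives $\|\calP^k - \calP^{k-1}\|_F \to 0$ as $k\to\infty$.

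I do not expect any genuine obstacle here, since the analytical content---in particular producing a strictly positive, iteration-uniform descent constant $a$ without the multiconvexity or Lipschitz-differentiability assumptions---has already been discharged in \Cref{Lemm:BCD-suff-desc1}. The only points requiring mild care are confirming that the telescoped right-hand side is bounded independently of $K$ (guaranteed by the nonnegativity of $\bcL$ and the finiteness of $\bcL(\calP^0)$) and that $a$ is the same across all iterations. The identical argument applies verbatim to the two-splitting sequence $\{\calQ^k\}_{k\in\NN}$ with $\bcL$ replaced by $\calL$.
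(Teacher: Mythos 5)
Your proposal is correct and follows essentially the same route as the paper: telescoping the sufficient descent inequality from \Cref{Lemm:BCD-suff-desc1}, using the nonnegativity of $\bcL$ to bound the telescoped sum by $\bcL(\calP^0)$, and reading off (a), (b), and (c). If anything you are slightly more careful than the paper's own write-up, which drops the factor $1/a$ in its displayed bounds; your version with $a\sum_{k=1}^K\|\calP^k-\calP^{k-1}\|_F^2 \leq \bcL(\calP^0)$ is the precise statement.
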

%Let $\calB$ be the uniform upper bound of the sequence $\{{\cal P}^k\}$.

\begin{proof}
	Summing \eqref{Eq:BCD-suff-desc} over $k$ from $1$ to $\infty$ yields
	\[
	\sum_{k=1}^{\infty} \|{\calP}^k - {\calP}^{k-1}\|_F^{2} \leq \overline{\calL}(\calP^0)<\infty,
	\]
	which directly implies $\|{\calP}^k - {\calP}^{k-1}\|_F \to 0$ as $k\to \infty.$
    Similarly, summing \eqref{Eq:BCD-suff-desc} over $k$ from $1$ to $K$ yields
	\[
	\frac{1}{K}\sum_{k=1}^{K} \|{\calP}^k - {\calP}^{k-1}\|_F^{2} \leq \frac{1}{K}\overline{\calL}(\calP^0),
	\]
    which implies claim (b) of this corollary.
    \hfill$\Box$
\end{proof}

\subsection{Global convergence of BCD}

\Cref{Thm:BCD-ConvThm-value} implies that the quality of the generated sequence is gradually improving during the iterative procedure in the sense of the descent of the objective value, and eventually achieves {some level of objective value, then keeps stable.
However, the convergence of the generated sequence $\{{\cal Q}^k\}_{k\in\NN}$ (resp.~$\{{\cal P}^k\}_{k\in\NN}$) itself is still unclear. In the following, we will show that under some natural conditions, the whole sequence converges to some critical point of the objective, and further if the initial point is sufficiently close to some global minimum, then the generated sequence can converge to this global minimum.

\begin{theorem}[Global convergence and rate]
	\label{Thm:BCD-ConvThm}
%	Suppose that the assumptions of \Cref{Thm:BCD-ConvThm-value} hold. If $\sigma_i$ is Lipschitz continuous on any bounded set, $i=1,\ldots, N-1$, and also one of the following conditions holds: (a) there exists a convergent subsequence $\{{\cal Q}^{k_j}\}_{j\in \mathbb{N}}$ (resp. $\{{\cal P}^{k_j}\}_{j\in \mathbb{N}}$); (b) $r_i$ is coercive\footnote{An extended-valued function $h: \RR^p \rightarrow \RR\cup \{+\infty\}$ is called coercive if and only if $h(\bx) \rightarrow +\infty$ as $\|\bx\| \rightarrow +\infty$. } for any $i=1,\ldots, N$; (c) $\cal L$ (resp. $\overline{\calL}$) is coercive.
Under the assumptions of \Cref{Thm:BCD-ConvThm1}, the following hold
\begin{enumerate}[label=(\alph*)]
\item
$\{{\cal Q}^k\}_{k\in\NN}$ (resp.~$\{{\cal P}^k\}_{k\in\NN}$) converges to a critical point of ${\cal L}$ (resp.~$\overline{\calL}$).
	
\item
If further the initialization $\calQ^0$ (resp.~$\calP^0$) is sufficiently close to some global minimum $\calQ^*$ of $\cal L$ (resp.~$\calP^*$ of $\bcL$), then $\calQ^k$ (resp.~$\calP^k$) converges to $\calQ^*$ (resp.~$\calP^*$).
	
\item
Let $\theta$ be the \KL exponent of $\cal L$ (resp.~$\bcL$) at $\calQ^*$ (resp.~$\calP^*$). There hold: \textbf{(a)} if $\theta = 0$, then $\{\calQ^k\}_{k\in\NN}$ converges in a finite number of steps; \textbf{(b)} if $\theta \in \left( 0,\frac{1}{2}\right] $, then $\|\calQ^k - \calQ^*\|_F \leq C\tau^k$ for all $k\geq k_0$, for certain $k_0>0, C>0, \tau \in (0,1)$; and \textbf{(c)} if $\theta \in (\frac{1}{2},1)$, then $\|\calQ^k-\calQ^*\|_F \leq Ck^{-\frac{1-\theta}{2\theta-1}}$ for $k\geq k_0$, for certain $k_0>0, C>0$. The same claims hold for the sequence $\{\calP^k\}$.

\item
$\frac{1}{K}\sum_{k=1}^K \|\bg^k\|_F^2 \rightarrow 0$ at the rate ${\cal O}(1/K)$ where $\bg^k \in \partial {\cal L}({\cal Q}^k)$.
Similarly, $\frac{1}{K}\sum_{k=1}^K \|{\bar{\bg}}^k\|_F^2 \rightarrow 0$ at the rate ${\cal O}(1/K)$ where ${\bar{\bg}}^k \in \partial {\bcL}({\calQ}^k)$.
\end{enumerate}
\end{theorem}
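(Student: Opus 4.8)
The plan is to derive all four claims from the abstract convergence framework of \citet[Theorem 2.9]{Attouch2013}, whose three structural ingredients---\emph{sufficient descent} (\Cref{Lemm:BCD-suff-desc}), \emph{relative error} (\Cref{Lemm:BCD-grad-bound}), and the \emph{continuity} condition \eqref{Eq:continuity-cond}---together with the \KL property (\Cref{Thm:KL-property}) have already been assembled. I would argue everything for $\bcL$ and $\{\calP^k\}$; the statements for $\calL$ and $\{\calQ^k\}$ follow verbatim. \textbf{Boundedness.} First I would show that under any of the hypotheses (i)--(iii) the sequence $\{\calP^k\}$ is bounded. Case (iii) is immediate: since $\bcL(\calP^k)\le\bcL(\calP^0)$ by \Cref{Lemm:BCD-suff-desc} and $\bcL$ is coercive, the relevant sublevel set is bounded. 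Case (i) supplies a convergent subsequence by assumption. Case (ii) is the only one requiring work: coercivity of each $r_i$ plus the uniform bound $\bcL(\calP^k)\le\bcL(\calP^0)$ controls every $\bW_i^k$, and then, starting from the fixed input $\bV_0=\bX$ and using boundedness of the penalty terms together with the Lipschitz-on-bounded-sets property of the $\sigma_i$ (\Cref{Assumption-model}(b)), I would propagate bounds layer by layer to control $\bV_i^k$ and $\bU_i^k$.

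\textbf{Parts (a) and (b).} With boundedness in hand, the $\omega$-limit set $\omega(\calP^0)$ is nonempty and compact; by \Cref{Thm:BCD-ConvThm-value} the value $\bcL$ is constant (equal to $\bcL^*$) on it, and by the relative-error and continuity conditions every point of $\omega(\calP^0)$ is critical. I would then invoke the uniformized \KL inequality over the compact set $\omega(\calP^0)$ to obtain a single desingularizing function $\varphi$. Combining concavity of $\varphi$, the sufficient-descent bound, and the relative-error bound produces the standard telescoping estimate
\[
2\|\calP^{k+1}-\calP^k\|_F \le \|\calP^k-\calP^{k-1}\|_F + \frac{\bar b}{a}\bigl[\varphi(\bcL(\calP^k)-\bcL^*)-\varphi(\bcL(\calP^{k+1})-\bcL^*)\bigr],
\]
whose summation gives the finite-length property $\sum_k\|\calP^{k+1}-\calP^k\|_F<\infty$. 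Hence $\{\calP^k\}$ is Cauchy and converges to a single limit $\calP^*\in\omega(\calP^0)$, which is therefore critical, proving (a). For (b) I would use the same estimate as a trapping argument: if $\calP^0$ is close enough to a global minimizer $\calP^*$ that the initial distance plus the length budget $\tfrac{\bar b}{a}\varphi(\bcL(\calP^0)-\bcL^*)$ is smaller than the radius of the \KL neighborhood of $\calP^*$, then the entire trajectory stays in that neighborhood and converges to a critical point there; since the function values converge and $\calP^*$ is a global minimum, the limit coincides with the global minimum.

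\textbf{Parts (c) and (d).} For the rates I would set $\Delta_k:=\sum_{j\ge k}\|\calP^{j+1}-\calP^j\|_F$, so that $\|\calP^k-\calP^*\|_F\le\Delta_k$. Substituting $\varphi(s)=cs^{1-\theta}$ into the \KL inequality and combining it with the relative-error and sufficient-descent bounds yields the standard recursion controlling the tail length $\Delta_k$; the three regimes---finite termination when $\theta=0$, geometric decay when $\theta\in(0,\tfrac12]$, and the polynomial rate $k^{-(1-\theta)/(2\theta-1)}$ when $\theta\in(\tfrac12,1)$---then follow from the recursion lemma in the \KL framework of \citet{Attouch2013}. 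Part (d) is the quickest: squaring the relative-error bound of \Cref{Lemm:BCD-grad-bound} gives $\|\bar\bg^k\|_F^2\le\bar b^2\|\calP^k-\calP^{k-1}\|_F^2$, and averaging together with \Cref{Coro:square-summable}(b) gives $\frac{1}{K}\sum_{k=1}^K\|\bar\bg^k\|_F^2\le\frac{\bar b^2}{K}\bcL(\calP^0)\to0$ at the $\calO(1/K)$ rate.

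\textbf{Main obstacle.} The delicate step is the finite-length argument in (a) together with the trapping in (b): one must correctly uniformize the \KL property over the connected compact limit set and keep the constants $a$ and $\bar b$ from \Cref{Lemm:BCD-suff-desc,Lemm:BCD-grad-bound} consistent so that the telescoping sum genuinely closes. The layer-by-layer boundedness argument under hypothesis (ii) is the other place demanding care, since it is exactly where the Lipschitz-on-bounded-sets assumption on the activations must be used to propagate control from $\bV_0=\bX$ through all $\bU_i^k$ and $\bV_i^k$.
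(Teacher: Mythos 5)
Your proposal is correct and follows essentially the same route as the paper: boundedness under each of hypotheses (i)--(iii) (with the same coercivity-plus-layerwise-propagation argument for case (ii)), then the Attouch--Bolte--Svaiter descent framework combining \Cref{Lemm:BCD-suff-desc}, \Cref{Lemm:BCD-grad-bound}, the continuity condition and the \KL property to get the telescoping finite-length estimate, the rates via the standard \KL-exponent recursion, and part (d) by squaring the relative-error bound and invoking \Cref{Coro:square-summable}. The only (immaterial) difference is that you uniformize the \KL inequality over the compact limit set, whereas the paper applies it pointwise at a single accumulation point together with a local trapping lemma proved by induction (its \Cref{Lemm:local-finite-length} and the two-case argument); both are standard instantiations of the same framework.
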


Our proof is mainly based on the \textit{Kurdyka-{\L}ojasiewicz} framework established in \citet{Attouch2013} (some other pioneer work can be also found in \citealp{Attouch2010}). According to \citet{Attouch2013}, three key conditions including the \textit{sufficient decrease condition}, the \textit{relative error condition} and the \textit{continuity condition}, together with the \KL property at some limiting point are required to establish the global convergence of a descent algorithm from the subsequence convergence, where the sufficient decrease condition and the \KL property restricted to any closed set have been established in \Cref{Lemm:BCD-suff-desc1} and \Cref{Thm:KL-property}, respectively. The relative error condition is developed in \Cref{Lemm:BCD-grad-bound1}, while the continuity condition holds naturally due to the continuity assumption. Particularly, the closed set assumption in \Cref{Thm:KL-property} can be satisfied naturally by the boundedness of the sequence as well as its limiting points, which is yielded by \Cref{Lemm:BCD-suff-desc1} and the coerciveness assumption. By exploiting the boundedness, we only need to consider the \KL property of the objective function restricted to a large bounded, closed set including all limiting points, instead of the total real-valued set.
In the following, we first prove \Cref{Thm:BCD-ConvThm} under the subsequence convergence assumption, i.e., condition (a) of this theorem, and then show that both condition (b) and condition (c) can imply the boundedness of the sequence (see \Cref{Lemm:boundednes}), and thus the subsequence convergence as required in condition (a). The rate of convergence results follow the same argument as in the proof of \citet[Theorem 2]{Attouch2009}.
%However, in order to establish the subsequence convergence, the boundedness of the iterative sequence is very crucial.
%In our proof, besides these three conditions (while KL property has been shown in \Cref{Thm:KL-property}), we still establish the boundedness of the sequence via only using the coercivity of $r_i$ $(i=1,\ldots,N)$ but not the total function $\overline{\calL}$ (resp. $\cal L$ in \Cref{alg:BCD-2-split}) as commonly required in the optimization literature.

%In order to establish the global convergence of the BCD algorithms, there are mainly two steps: the first step is to prove that the generated sequence is square summable, i.e., $\sum_{k=1}^{\infty} \|{\cal P}^k - {\cal P}^{k-1}\|_F^2 < \infty$ for \Cref{alg:BCD}, which shows the subsequential convergence, by taking advantage of the specific splitting formulations and the use of proximal update schemes; the second step is to establish the finite length property, i.e.,  $\sum_{k=1}^{\infty} \|{\cal P}^k - {\cal P}^{k-1}\|_F < \infty$ which implies the global convergence (i.e., the whole sequence convergence), leveraging the Lipschitz continuity of the activation function and the KL property of $\overline{\calL}$.

%\subsection{Establishing \textit{sufficient decrease condition}}
%\label{app:estab-suffdescent}

\subsubsection{Establishing relative error condition}
\label{app:estab-relative-error}

We restate \Cref{Lemm:BCD-grad-bound} precisely as follows.

\begin{lemma}[Restatement of \Cref{Lemm:BCD-grad-bound}]
	\label{Lemm:BCD-grad-bound1}
	Under the conditions of \Cref{Thm:BCD-ConvThm},
	let $\cal B$ be an upper bound of $\calP^{k-1}$ and $\calP^k$ for any positive integer $k$,
	$L_\calB$ be a uniform Lipschitz constant of $\sigma_i$ on the bounded set $\{\calP: \|\calP\|_F \leq \calB\}$, and
	\begin{align}
	\label{Eq:def-b}
	b:= \max\{\gamma, \alpha+\gamma \calB, \alpha + \gamma L_\calB, \gamma \calB + 2\gamma \calB^2, 2\gamma \calB + \gamma \calB^2\},
	\end{align}
	(or, for the prox-linear case, $b:= \max\{\gamma, L_R+\alpha+\gamma \calB, \alpha + \gamma L_\calB, \gamma \calB + 2\gamma \calB^2, 2\gamma \calB + \gamma \calB^2\}$),
	%where $\calB$ is a uniform upper bound of the sequence ${\cal P}^k$ and $L_\calB$ is the uniform Lipschitz constant of $\sigma_i$ on the bounded set $\{\calP: \|\calP\|_F \leq \calB\}$,
	then for any positive integer $k$, there holds,
	\begin{align}
	\label{Eq:BCD-grad-bound}
	\mathrm{dist}(\zero,\partial \overline{\calL}({\cal P}^k))
	\leq b \sum_{i=1}^N [\|\bW_i^k - \bW_i^{k-1}\|_F + \|\bV_i^k - \bV_i^{k-1}\|_F + \|\bU_i^k - \bU_i^{k-1}\|_F ]
	\leq \bar{b} \|\calP^k - \calP^{k-1}\|_F,
	\end{align}
	where $\bar{b}:=b \sqrt{3N}$, $\mathrm{dist}(\zero,{\cal S}):= \inf_{\bs\in {\cal S}} \|\bs\|_F$ for a set ${\cal S}$, and
	\begin{align*}
	\partial \overline{\calL}({\cal P}^k)
	:= (\{\partial_{\bW_i}\overline{\calL}\}_{i=1}^N, \{\partial_{\bV_i} \overline{\calL}\}_{i=1}^N, \{\partial_{\bU_i} \overline{\calL} \}_{i=1}^N)(\calP^k).
	\end{align*}
	%Moreover, any limiting point is a stationary point.
\end{lemma}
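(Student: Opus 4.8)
The plan is to build, block by block, one explicit element of the partial subdifferential of $\bcL$ at $\calP^k$, to bound each such element by a handful of consecutive-iterate differences, and then to concatenate them into a single $\bar{\bg}^k\in\partial\bcL(\calP^k)$ whose Frobenius norm dominates $\mathrm{dist}(\zero,\partial\bcL(\calP^k))$. The common starting point is the first-order optimality condition of each subproblem in \Cref{alg:BCD-3-split}: because every update returns the exact minimizer of its subproblem, $\zero$ belongs to the (limiting) subdifferential of that subproblem's objective at the updated value. Writing each subproblem objective as a nonsmooth part plus $C^1$ quadratics, the sum rule gives this subdifferential as $\partial(\text{nonsmooth})$ plus the gradients of the quadratics, so I may fix one subgradient element of the nonsmooth part for which the optimality identity holds with equality.

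The observation that neutralizes the nonsmoothness of $\sigma_i$ is a cancellation. In the $\bU_i$-subproblem the composition term $\tfrac{\gamma}{2}\|\bV_i^k-\sigma_i(\bU_i)\|_F^2$ carries exactly the same already-updated target $\bV_i^k$ as the corresponding term of $\bcL$ evaluated at $\calP^k$; likewise the convex regularizers $s_i(\bV_i)$, $r_i(\bW_i)$ appear identically in their own subproblems and in $\bcL(\calP^k)$. Hence, subtracting the subproblem optimality identity from the defining expression of $\partial_{\bU_i}\bcL(\calP^k)$ (resp.\ $\partial_{\bV_i}\bcL$, $\partial_{\bW_i}\bcL$) and selecting the same element on both sides, the potentially complicated subdifferential of $\|\cdot-\sigma_i(\cdot)\|_F^2$ (resp.\ of $s_i$, $r_i$) drops out entirely. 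What remains is a residual built only from smooth quadratics and the proximal $\tfrac{\alpha}{2}\|\cdot-\cdot^{k-1}\|_F^2$ terms --- exactly the discrepancies created by the Gauss--Seidel ordering, where the subproblem uses a stale block ($\bW_i^{k-1}$, $\bV_{i-1}^{k-1}$, or $\bU_i^{k-1}$) while $\bcL(\calP^k)$ uses its updated value. For the $\bV_i$-block all smooth quadratics cancel as well, leaving exactly $\gamma(\sigma_i(\bU_i^{k-1})-\sigma_i(\bU_i^k))$, whose norm is at most $\gamma L_\calB\|\bU_i^k-\bU_i^{k-1}\|_F$ by the Lipschitz continuity of $\sigma_i$ on $\{\calP:\|\calP\|_F\le\calB\}$; this is where $L_\calB$ enters.

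The rest is bookkeeping. For the $\bW_i$- and $\bU_i$-blocks the residual is a difference of bilinear expressions, e.g.\ $(\bU_i^k-\bW_i^k\bV_{i-1}^{k-1})(\bV_{i-1}^{k-1})^\top-(\bU_i^k-\bW_i^k\bV_{i-1}^k)(\bV_{i-1}^k)^\top$; adding and subtracting a common middle term factors it into pieces each carrying one iterate difference times at most two bounded factors, which under $\|\calP\|_F\le\calB$ produces the coefficients $\gamma\calB+2\gamma\calB^2$ and $2\gamma\calB+\gamma\calB^2$ appearing in the definition of $b$, while the output-layer blocks $\bV_N,\bU_N,\bW_N$ (treated analogously, using $\sigma_N\equiv\Id$ and the presence of $\calR_n$) and the proximal penalties contribute $\gamma$, $\alpha+\gamma\calB$, $\alpha+\gamma L_\calB$. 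Setting $b$ to the maximum of these coefficients and summing the per-block bounds, grouped by difference variable, yields the first inequality; Cauchy--Schwarz over the $3N$ summands then gives the factor $\sqrt{3N}$ and hence $\bar b=b\sqrt{3N}$.

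I expect the main obstacle to be the rigorous justification of the cancellation in the $\bU_i$-update, where $\|\bV_i^k-\sigma_i(\cdot)\|_F^2$ is genuinely nonsmooth (and nonconvex) in $\bU_i$. One must argue not merely that both relevant subdifferentials are nonempty, but that they coincide as sets --- which holds because, at $\calP^k$, this term is literally the same function of $\bU_i$ in the subproblem and in $\bcL$ --- so that a single subgradient element may be used on both sides and truly cancels; this in turn relies on the limiting-subdifferential sum rule for a nonsmooth function plus $C^1$ quadratics. Once this is in place, every remaining estimate is a triangle-inequality bound under the boundedness hypothesis $\|\calP^{k-1}\|_F,\|\calP^k\|_F\le\calB$.
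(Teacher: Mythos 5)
Your proposal is correct and follows essentially the same route as the paper's proof: both start from the exact first-order optimality conditions of each subproblem, exploit the fact that the nonsmooth pieces ($\partial r_i$, $\partial s_i$, and the $\sigma_i$-composition term with the already-updated target $\bV_i^k$) appear identically in the subproblem objective and in $\partial\bcL(\calP^k)$ so that only smooth stale-versus-fresh residuals survive, and then bound those residuals blockwise via the $\calB$-boundedness, the Lipschitz constant $L_\calB$, and an add-and-subtract factoring of the bilinear terms, finishing with Cauchy--Schwarz over the $3N$ blocks to get $\bar b=b\sqrt{3N}$. The subtlety you flag about the $\bU_i$-block cancellation is handled in the paper in exactly the way you describe (the same set-valued term is kept on both sides of the inclusion), so there is no substantive difference.
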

%\textbf{Proof.}
\begin{proof}
	The inequality \eqref{Eq:BCD-grad-bound} is established via bounding each term of $\partial \overline{\calL}({\cal P}^k)$. By the optimality conditions of all updates in \Cref{alg:BCD-3-split}, the following hold
	\begin{align*}
	\zero & \in \partial s_N(\bV_N^k) + \partial \calR_n(\bV_N^k; \bY) + \gamma (\bV_N^k - \bU_N^{k-1}) + \alpha (\bV_N^k - \bV_N^{k-1}), \\
	& (\text{or for prox-linear}, 0 \in \partial s_N(\bV_N^k) + \nabla \calR_n(\bV_N^{k-1}; \bY) + \gamma (\bV_N^k - \bU_N^{k-1}) + \alpha (\bV_N^k - \bV_N^{k-1}),) \\
	%\label{Eq:BCD-VT-optcond}\\
	 \zero &= \gamma (\bU_N^k - \bV_N^k)+ \gamma (\bU_N^k - \bW_N^{k-1} \bV_{N-1}^{k-1}), \\
	%\label{Eq:BCD-UT-optcond}\\
	 \zero &\in \partial r_N(\bW_N^k) + \gamma (\bW_N^k \bV_{N-1}^{k-1} - \bU_N^k){\bV_{N-1}^{k-1}}^\top + \alpha (\bW_N^k - \bW_N^{k-1}),
	 \end{align*}
	%\label{Eq:BCD-WT-optcond}\\
	for $i= N-1,\ldots, 1$,
	\begin{align*}
	 \zero &\in \partial s_i(\bV_i^k)+ \gamma (\bV_i^k - \sigma_i(\bU_i^{k-1})) + \gamma {\bW_{i+1}^k}^\top (\bW_{i+1}^k \bV_i^k - \bU_{i+1}^k), \\
	%\label{Eq:BCD-Vi-optcond}\\
	 \zero &\in  \gamma [(\sigma_i(\bU_i^k) - \bV_i^k)\odot \partial \sigma_i(\bU_i^k)] + \gamma (\bU_i^k - \bW_i^{k-1}\bV_{i-1}^{k-1}) + \alpha(\bU_i^k - \bU_i^{k-1}), \\
	%\label{Eq:BCD-Ui-optcond}\\
	 \zero &\in \partial r_i(\bW_i^k) + \gamma (\bW_i^k \bV_{i-1}^{k-1}- \bU_i^k){\bV_{i-1}^{k-1}}^\top  + \alpha (\bW_i^k - \bW_i^{k-1}),
	%\label{Eq:BCD-Wi-optcond}
	\end{align*}
	where $\bV_0^k \equiv \bV_0 = \bX$ for all $k$, and $\odot$ is the Hadamard product.
	By the above relations, we have
	\begin{align*}
	&-\alpha (\bV_N^k - \bV_N^{k-1}) -\gamma (\bU_N^k - \bU_N^{k-1})
	\in \partial s_N(\bV_N^k)+\partial \calR_n(\bV_N^k; \bY) + \gamma (\bV_N^k - \bU_N^{k})
	= \partial_{\bV_N} \overline{\calL}(\calP^k),\\
	&\left(\text{or}, (\nabla \calR_n(\bV_N^{k}; \bY)-\nabla \calR_n(\bV_N^{k-1}; \bY)) -\alpha (\bV_N^k - \bV_N^{k-1}) -\gamma (\bU_N^k - \bU_N^{k-1}) \in \partial_{\bV_N} \overline{\calL}(\calP^k) \right)\\
	%\end{align*}
	%%\label{Eq:BCD-VT-optcond}\\
	%\begin{align*}
	&-\gamma (\bW_N^k - \bW_N^{k-1})\bV_{N-1}^k - \gamma \bW_N^{k-1}(\bV_{N-1}^k - \bV_{N-1}^{k-1})
	= \gamma (\bU_N^k - \bV_N^k)+ \gamma (\bU_N^k - \bW_N^{k} \bV_{N-1}^{k})
	= \partial_{\bU_N} \overline{\calL}(\calP^k),\\
%	\end{align*}
%	%%\label{Eq:BCD-UT-optcond}\\
%	\begin{multline*}
	&\gamma {\bW_N^k}\left[ \bV_{N-1}^k ({\bV_{N-1}^k} - {\bV_{N-1}^{k-1}})^\top  + (\bV_{N-1}^k - \bV_{N-1}^{k-1}){\bV_{N-1}^{k-1}}^\top \right]
	- \gamma \bU_N^k(\bV_N^k - \bV_N^{k-1})^\top   - \alpha (\bW_N^k - \bW_N^{k-1})\\
	&\qquad\qquad\qquad\qquad\qquad\qquad\qquad\qquad\qquad\qquad\quad\in \partial r_N(\bW_N^k) + \gamma (\bW_N^k \bV_{N-1}^k - \bU_N^k){\bV_{N-1}^k}^\top
	= \partial_{\bW_N} \overline{\calL}(\calP^k),
	%\label{Eq:BCD-UT-optcond}\\
	\end{align*}
	for $i= N-1,\ldots, 1$,
	\begin{align*}
    &-\gamma (\sigma_i(\bU_i^k) - \sigma_i(\bU_i^{k-1}))
	\in \partial s_i(\bV_i^k) + \gamma (\bV_i^k - \sigma_i(\bU_i^{k})) + \gamma {\bW_{i+1}^k}^\top  (\bW_{i+1}^k \bV_i^k - \bU_{i+1}^k)
	= \partial_{\bV_i} \overline{\calL}(\calP^k),\\
	%\begin{multline*}
	&-\gamma \bW_i^{k-1}(\bV_{i-1}^k - \bV_{i-1}^{k-1}) -\gamma (\bW_i^k - \bW_i^{k-1})\bV_{i-1}^k
	-\alpha(\bU_i^k -\bU_i^{k-1}) \\
	&\qquad\qquad\qquad\qquad\qquad\qquad\qquad\qquad\qquad\in  \gamma [(\sigma_i(\bU_i^k) -\bV_i^k)\odot \partial \sigma_i(\bU_i^k)] + \gamma (\bU_i^k - \bW_i^{k}\bV_{i-1}^{k})
	= \partial_{\bU_i} \overline{\calL}(\calP^k),\\
	%\end{multline*}
%	\begin{multline*}
	&\gamma {\bW_i^k}\left[ \bV_{i-1}^k (\bV_{i-1}^k - \bV_{i-1}^{k-1})^\top  + (\bV_{i-1}^k - \bV_{i-1}^{k-1}){\bV_{i-1}^{k-1}}^\top \right]
	- \gamma \bU_i^k(\bV_{i-1}^k - \bV_{i-1}^{k-1})^\top   - \alpha (\bW_i^k - \bW_i^{k-1})\nonumber\\
	&\qquad\qquad\qquad\qquad\qquad\qquad\qquad\qquad\qquad\qquad\qquad\qquad\in \partial r_i(\bW_i^k) + \gamma (\bW_i^k \bV_{i-1}^k - \bU_i^k){\bV_{i-1}^k}^\top
	= \partial_{\bW_i} \overline{\calL}(\calP^k).
	%\label{Eq:BCD-Wi-optcond}
	\end{align*}
	Based on the above relations, and by the Lipschitz continuity of the activation function on the bounded set $\{\calP: \|\calP\|_F \leq \calB\}$ and the bounded assumption of both $\calP^{k-1}$ and $\calP^k$, we have
	\begin{align*}
	\|\calG_{\bV_N}^k\|_F & \leq \alpha \|\bV_N^k - \bV_{N}^{k-1}\|_F + \gamma \|\bU_N^k  - \bU_N^{k-1}\|_F, && \calG_{\bV_N}^k \in \partial_{\bV_N} \bcL(\calP^k),\\
	&(\text{or}, \|\calG_{\bV_N}^k\|_F  \leq (L_R+\alpha) \|\bV_N^k - \bV_{N}^{k-1}\|_F + \gamma \|\bU_N^k  - \bU_N^{k-1}\|_F)\\
	\|\calG_{\bU_N}^k\|_F & \leq \gamma \calB \|\bW_N^k - \bW_N^{k-1}\|_F + \gamma \calB \|\bV_{N-1}^k - \bV_{N-1}^{k-1}\|_F, && \calG_{\bU_N}^k \in \partial_{\bU_N} \bcL(\calP^k),\\
	\|\calG_{\bW_N}^k\|_F & \leq 2\gamma \calB^2 \|\bV_{N-1}^k - \bV_{N-1}^{k-1}\|_F + \gamma \calB \|\bV_{N}^k - \bV_{N}^{k-1}\|_F + \alpha \|\bW_{N}^k - \bW_{N}^{k-1}\|_F, && \calG_{\bW_N}^k \in \partial_{\bW_N} \bcL(\calP^k),
	\end{align*}
	and for $i=N-1,\ldots,1$,
	\begin{align*}
	\|\calG_{\bV_i}^k\|_F & \leq \gamma L_\calB \|\bU_i^k - \bU_i^{k-1}\|_F, && \calG_{\bV_i}^k \in \partial_{\bV_i} \bcL(\calP^k),\\
	\|\calG_{\bU_i}^k\|_F & \leq \gamma\calB \|\bV_{i-1}^k - \bV_{i-1}^{k-1}\|_F + \gamma \calB \|\bW_i^k - \bW_i^{k-1}\|_F + \alpha \|\bU_i^k - \bU_i^{k-1}\|_F, && \calG_{\bU_i}^k \in \partial_{\bU_i} \bcL(\calP^k),\\
	\|\calG_{\bW_i}^k\|_F & \leq (\gamma \calB^2+\gamma \calB) \|\bV_{i-1}^k - \bV_{i-1}^{k-1}\|_F + \alpha \|\bW_{i}^k - \bW_{i}^{k-1}\|_F, && \calG_{\bW_i}^k \in \partial_{\bW_i} \bcL(\calP^k).
	\end{align*}
	Summing the above inequalities and after some simplifications, we obtain \eqref{Eq:BCD-grad-bound}.
	\hfill$\Box$
\end{proof}

\subsubsection{Proof of \Cref{Thm:BCD-ConvThm} under condition (a)}
\label{app:proof-thm5}
Based on \Cref{Thm:BCD-ConvThm-value} and under the hypothesis that $\bcL$ is continuous on its domain and there exists a convergent subsequence (i.e., condition (a)), the \textit{continuity condition} required in \citet{Attouch2013} holds naturally, i.e., there exists a subsequence $\{\calP^{k_j}\}_{j\in \mathbb{N}}$ and ${\cal P}^*$ such that
\begin{align}
\label{Eq:continuity-cond}
\calP^{k_j} \to \calP^* \quad \text{and} \quad \overline{\calL}(\calP^{k_j}) \to \overline{\calL}(\calP^*), \;\text{as}\;\; j\to \infty.
\end{align}
Based on \Cref{Lemm:BCD-suff-desc1,Lemm:BCD-grad-bound1}, and \eqref{Eq:continuity-cond}, we can justify the global convergence of $\calP^k$ stated in \Cref{Thm:BCD-ConvThm}, following the proof idea of \citet{Attouch2013}.
For the completeness of the proof, we still present the detailed proof as follows.

Before presenting the main proof, we establish a local convergence result of $\calP^k$, i.e., the convergence of $\calP^k$ when $\calP^0$ is sufficiently close to some point $\calP^*$. Specifically, let $(\varphi, \eta, U)$ be the associated parameters of the \KL property of $\bcL$ at $\calP^*$, where $\varphi$ is a continuous concave function, $\eta$ is a positive constant, and $U$ is a neighborhood of $\calP^*$.
%let $\varphi$ be the associated concave function of the \KL property of $\bcL$ at point $\calP^*$, $U$ be a neighborhood of $\calP^*$.
Let $\rho$ be some constant such that
${\cal N}(\calP^*,\rho):= \{\calP: \|\calP - \calP^*\|_F \leq \rho\} \subset U$,
$\calB:=\rho + \|\calP^*\|_F$, and $L_\calB$ be the uniform Lipschitz constant for $\sigma_i$, $i=1,\ldots, N-1$, within ${\cal N}(\calP^*, \rho)$.
Assume that $\calP^0$ satisfies the following condition
\begin{align}
\label{Eq:initial-cond}
\frac{\bar{b}}{a}\varphi(\bcL(\calP^0) - \bcL(\calP^*)) + 3\sqrt{\frac{\bcL(\calP^0)}{a}} + \|\calP^0 - \calP^*\|_F < \rho,
\end{align}
where $\bar{b} = b\sqrt{3N}$, $b$ and $a$ are defined in \eqref{Eq:def-b} and \eqref{Eq:def-a}, respectively,

\begin{lemma}[Local convergence]
	\label{Lemm:local-finite-length}
	Under the conditions of \Cref{Thm:BCD-ConvThm}, suppose that $\calP^0$ satisfies the condition \eqref{Eq:initial-cond}, and $\bcL(\calP^k) > \bcL(\calP^*)$ for $k \in \mathbb{N}$, then
	\begin{align}
	\sum_{i=1}^k \|\calP^i - \calP^{i-1}\|_F &\leq 2\sqrt{\frac{\bcL(\calP^{0})}{a}}+\frac{\bar b}{a}\varphi(\bcL(\calP^{0})-\bcL(\calP^*)), \ \forall k\geq 1, \label{Eq:local-finite}\\
	\calP^k &\in {\cal N}(\calP^*, \rho), \quad \forall k\in \mathbb{N}. \label{Eq:bound}
	\end{align}
	As $k$ goes to infinity, \eqref{Eq:local-finite} yields
	\[
	\sum_{i=1}^\infty \|\calP^i - \calP^{i-1}\|_F < \infty,
	\]
	which implies the convergence of $\{\calP^k\}_{k\in\NN}$.
	%there is some ${\cal N} \subset U \cap \mathrm{dom}(\partial \bcL)$ such that $\{\calP^k\} \subset {\cal N}$ and $\calP^k$ converges to a point in $\cal N$, where $U$ is a neighborhood of $\calP^*$ defined in \Cref{Def-KLProp}.
\end{lemma}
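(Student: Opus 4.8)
The plan is to run the standard finite-length argument of \citet{Attouch2013}, feeding in the three ingredients already proved: the sufficient descent inequality $\bcL(\calP^k)\leq\bcL(\calP^{k-1})-a\|\calP^k-\calP^{k-1}\|_F^2$ (\Cref{Lemm:BCD-suff-desc1}), the relative error bound $\mathrm{dist}(\zero,\partial\bcL(\calP^k))\leq\bar{b}\|\calP^k-\calP^{k-1}\|_F$ (\Cref{Lemm:BCD-grad-bound1}), and the \KL inequality of $\bcL$ at $\calP^*$ (\Cref{Thm:KL-property}, \Cref{Def-KLProp}). The decisive subtlety is that the relative error bound (through the uniform Lipschitz constant $L_\calB$ and the bound $\calB$) and the \KL inequality (through the desingularizing function $\varphi$ and its neighborhood $U$) are only licensed while the iterates stay in ${\cal N}(\calP^*,\rho)\subset U$; consequently the proof must be an induction that establishes the length bound \eqref{Eq:local-finite} and the containment \eqref{Eq:bound} simultaneously.

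First I would set $\Delta_k:=\varphi(\bcL(\calP^k)-\bcL(\calP^*))$, which is well defined and nonnegative since $\bcL(\calP^k)>\bcL(\calP^*)$ by hypothesis, and nonincreasing since $\{\bcL(\calP^k)\}$ is nonincreasing (\Cref{Thm:BCD-ConvThm-value}) and $\varphi$ is increasing. The core is a per-step estimate valid whenever $\calP^{k-1},\calP^k\in{\cal N}(\calP^*,\rho)$: concavity of $\varphi$ gives $\Delta_k-\Delta_{k+1}\geq\varphi'(\bcL(\calP^k)-\bcL(\calP^*))\,(\bcL(\calP^k)-\bcL(\calP^{k+1}))$, and combining the \KL inequality $\varphi'(\bcL(\calP^k)-\bcL(\calP^*))\geq1/\mathrm{dist}(\zero,\partial\bcL(\calP^k))\geq1/(\bar{b}\|\calP^k-\calP^{k-1}\|_F)$ with the sufficient descent $\bcL(\calP^k)-\bcL(\calP^{k+1})\geq a\|\calP^{k+1}-\calP^k\|_F^2$ yields $\|\calP^{k+1}-\calP^k\|_F^2\leq\frac{\bar{b}}{a}(\Delta_k-\Delta_{k+1})\|\calP^k-\calP^{k-1}\|_F$. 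The inequality $2\sqrt{xy}\leq x+y$ then gives the telescoping form $2\|\calP^{k+1}-\calP^k\|_F\leq\|\calP^k-\calP^{k-1}\|_F+\frac{\bar{b}}{a}(\Delta_k-\Delta_{k+1})$.

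Next I would carry out the induction on containment. For the base step, sufficient descent together with $\bcL\geq0$ gives $\|\calP^1-\calP^0\|_F\leq\sqrt{\bcL(\calP^0)/a}$, so \eqref{Eq:initial-cond} forces $\|\calP^1-\calP^*\|_F\leq\|\calP^1-\calP^0\|_F+\|\calP^0-\calP^*\|_F<\rho$. Assuming $\calP^0,\dots,\calP^k\in{\cal N}(\calP^*,\rho)$, I would sum the per-step estimate over indices $1,\dots,k$; the $\|\calP^j-\calP^{j-1}\|_F$ terms telescope against the doubled left-hand side, and dropping the nonpositive $-\Delta_{k+1}$ and using the monotone bound $\Delta_1\leq\varphi(\bcL(\calP^0)-\bcL(\calP^*))$ gives $\sum_{i=1}^{k+1}\|\calP^i-\calP^{i-1}\|_F\leq2\sqrt{\bcL(\calP^0)/a}+\frac{\bar{b}}{a}\varphi(\bcL(\calP^0)-\bcL(\calP^*))$, which is exactly \eqref{Eq:local-finite}. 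To close the induction I bound $\|\calP^{k+1}-\calP^*\|_F\leq\sum_{i=1}^{k+1}\|\calP^i-\calP^{i-1}\|_F+\|\calP^0-\calP^*\|_F$; the extra $\sqrt{\bcL(\calP^0)/a}$ of slack built into \eqref{Eq:initial-cond} (it has $3\sqrt{\bcL(\calP^0)/a}$ against the $2\sqrt{\bcL(\calP^0)/a}$ in \eqref{Eq:local-finite}) then forces this below $\rho$, so $\calP^{k+1}\in{\cal N}(\calP^*,\rho)$ and \eqref{Eq:bound} holds for all $k$. Letting $k\to\infty$, the partial sums are monotone and uniformly bounded, hence $\sum_{i=1}^\infty\|\calP^i-\calP^{i-1}\|_F<\infty$, so $\{\calP^k\}$ is Cauchy and converges.

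The step I expect to be the main obstacle is precisely the circularity of this induction: the \KL and relative-error inequalities that drive the length estimate are only available inside ${\cal N}(\calP^*,\rho)$, yet remaining in that ball is what the length estimate is meant to prove. Untangling it cleanly rests on the careful bookkeeping of the constants in \eqref{Eq:initial-cond} versus \eqref{Eq:local-finite} (the separate treatment of the first step via $\|\calP^1-\calP^0\|_F\leq\sqrt{\bcL(\calP^0)/a}$, the discarding of $\Delta_{k+1}$, and the monotone bound on $\Delta_1$), and on checking that the \KL value window $\bcL(\calP^*)<\bcL(\calP^k)<\bcL(\calP^*)+\eta$ persists for every $k$ --- which follows from monotonicity once $\calP^0$ is close enough that $\bcL(\calP^0)-\bcL(\calP^*)<\eta$, the implicit requirement making $\varphi(\bcL(\calP^0)-\bcL(\calP^*))$ in \eqref{Eq:initial-cond} meaningful.
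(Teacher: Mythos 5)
Your proposal is correct and follows essentially the same route as the paper's proof: the same induction that simultaneously establishes containment in ${\cal N}(\calP^*,\rho)$ and the finite-length bound, the same per-step estimate combining sufficient descent, the relative error bound, the \KL inequality and concavity of $\varphi$ via $2\sqrt{xy}\le x+y$, the same separate treatment of the first step through $\|\calP^1-\calP^0\|_F\le\sqrt{\bcL(\calP^0)/a}$, and the same telescoping to close the induction using the slack in \eqref{Eq:initial-cond}. Your additional remark about the \KL value window persisting by monotonicity is a point the paper leaves implicit, but it does not change the argument.
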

\begin{proof}
	We will prove $\calP^k \in {\cal N}(\calP^*,\rho)$ by induction on $k$.
	It is obvious that $\calP^0 \in {\cal N}(\calP^*,\rho)$. Thus, \eqref{Eq:bound} holds for $k=0$. For $k=1$, we have from \eqref{Eq:BCD-suff-desc} and the nonnegativeness of $\{\bcL(\calP^k)\}_{k\in\NN}$ that
	\[
	\bcL(\calP^0) \geq \bcL(\calP^0) - \bcL(\calP^1) \geq a \|\calP^0 - \calP^1\|_F^2,
	\]
	which implies $\|\calP^0 - \calP^1\|_F \leq \sqrt{\frac{\bcL(\calP^0)}{a}}$.
	Therefore,
	\[
	\|\calP^1 - \calP^*\|_F \leq \|\calP^0 - \calP^1\|_F + \|\calP^0 - \calP^*\|_F \leq \sqrt{\frac{\bcL(\calP^0)}{a}} + \|\calP^0 - \calP^*\|_F,
	\]
	which indicates $\calP^1 \in {\cal N}(\calP^*,\rho)$.
	
	Suppose that $\calP^k\in {\cal N}(\calP^*,\rho)$ for $0\leq k \leq K$. We proceed to show that $\calP^{K+1} \in {\cal N}(\calP^*,\rho)$.
	Since $\calP^k\in {\cal N}(\calP^*,\rho)$ for $0\leq k \leq K$, it implies that $\|\calP^k\|_F \leq \calB:= \rho + {\calP^*}$ for $0\leq k \leq K$.
	Thus, by \Cref{Lemm:BCD-grad-bound1}, for $1\leq k \leq K$,
	\[
	\mathrm{dist}(\zero,\partial \bcL(\calP^k)) \leq \bar{b} \|\calP^k - \calP^{k-1}\|_F,
	\]
	which together with the \KL inequality \eqref{KLIneq} yields
	\begin{align}
	\label{Eq:subgrad}
	\frac{1}{\varphi'(\bcL(\calP^k)-\bcL(\calP^*))} \leq \bar{b} \|\calP^k - \calP^{k-1}\|_F.
	\end{align}
	By \eqref{Eq:BCD-suff-desc}, the above inequality and the concavity of $\varphi$, for $k\geq 2$, the following holds
	\begin{align*}
	%\label{Eq:square-distance}
	a\|\calP^k - \calP^{k-1}\|_F^2
	&\leq \bcL(\calP^{k-1}) - \bcL(\calP^{k}) = (\bcL(\calP^{k-1})-\bcL(\calP^*)) - (\bcL(\calP^{k})-\bcL(\calP^*)) \nonumber\\
	&\leq \frac{\varphi(\bcL(\calP^{k-1})-\bcL(\calP^*)) - \varphi(\bcL(\calP^{k})-\bcL(\calP^*))}{\varphi'(\bcL(\calP^{k-1}) - \bcL(\calP^*))} \nonumber\\
	&\leq {\bar b}\|\calP^{k-1} - \calP^{k-2}\|_F \cdot \left[\varphi(\bcL(\calP^{k-1})-\bcL(\calP^*)) - \varphi(\bcL(\calP^{k})-\bcL(\calP^*))\right],
	\end{align*}
	which implies
	\begin{align*}
	\|\calP^k - \calP^{k-1}\|_F^2
	\leq \|\calP^{k-1} - \calP^{k-2}\|_F \cdot \frac{\bar b}{a}\left[\varphi(\bcL(\calP^{k-1})-\bcL(\calP^*)) - \varphi(\bcL(\calP^{k})-\bcL(\calP^*))\right].
	\end{align*}
	Taking the square root on both sides and using the inequality $2\sqrt{\alpha \beta} \leq \alpha + \beta$, the above inequality implies
	\begin{align*}
	%\label{Eq:pk-pk-1}
	2\|\calP^k - \calP^{k-1}\|_F \leq \|\calP^{k-1} - \calP^{k-2}\|_F + \frac{\bar b}{a}\left[\varphi(\bcL(\calP^{k-1})-\bcL(\calP^*)) - \varphi(\bcL(\calP^{k})-\bcL(\calP^*))\right].
	\end{align*}
	Summing the above inequality over $k$ from $2$ to $K$ and adding $\|\calP^1 - \calP^{0}\|_F$ to both sides, it yields
	\begin{equation*}
	%\label{Eq:finite-sum}
	\|\calP^K - \calP^{K-1}\|_F+\sum_{k=1}^{K} \|\calP^k - \calP^{k-1}\|_F \nonumber
	\leq 2\|\calP^1 - \calP^{0}\|_F+\frac{\bar b}{a}\left[\varphi(\bcL(\calP^{0})-\bcL(\calP^*)) - \varphi(\bcL(\calP^{K})-\bcL(\calP^*))\right]
	%& \leq \frac{\bar b}{a}\varphi(\bcL(\calP^{0})-\bcL(\calP^*)),
	\end{equation*}
	which implies
	\begin{equation}
	\label{Eq:finite-sum}
	\sum_{k=1}^{K} \|\calP^k - \calP^{k-1}\|_F \leq 2\sqrt{\frac{\bcL(\calP^{0})}{a}} + \frac{\bar b}{a}\varphi(\bcL(\calP^{0})-\bcL(\calP^*)),
	\end{equation}
	and further,
	\begin{align*}
	\|\calP^{K+1} - \calP^*\|_F
	&\leq \|\calP^{K+1} - \calP^K\|_F + \sum_{k=1}^K \|\calP^k - \calP^{k-1}\|_F + \|\calP^0 - \calP^*\|_F \\
	&\leq \sqrt{\frac{\bcL(\calP^{K})-\bcL(\calP^{K+1})}{a}} + 2\sqrt{\frac{\bcL(\calP^{0})}{a}}+ \frac{\bar b}{a}\varphi(\bcL(\calP^{0})-\bcL(\calP^*)) + \|\calP^0 - \calP^*\|_F\\
	&\leq 3\sqrt{\frac{\bcL(\calP^{0})}{a}}+ \frac{\bar b}{a}\varphi(\bcL(\calP^{0})-\bcL(\calP^*)) + \|\calP^0 - \calP^*\|_F < \rho,
	\end{align*}
	where the second inequality holds for \eqref{Eq:BCD-suff-desc} and \eqref{Eq:finite-sum}, the third inequality holds for $\bcL(\calP^{K})-\bcL(\calP^{K+1}) \leq \bcL(\calP^{K}) \leq \bcL(\calP^{0})$.
	Thus, $\calP^{K+1} \in {\cal N}(\calP^*,\rho)$. Therefore, we prove this lemma.
	%Note that \eqref{Eq:finite-sum} holds for all $K\geq 1$. Letting $K\rightarrow \infty$ completes the proof of Claim A.2.
\hfill$\Box$
\end{proof}

\begin{proof}[Proof of \Cref{Thm:BCD-ConvThm}]
	%\Cref{Thm:BCD-ConvThm}(a) holds for \Cref{Coro:square-summable}. \Cref{Thm:BCD-ConvThm}(b) holds for \Cref{Lemm:boundednes}. Now,
	We prove the whole sequence convergence stated in \Cref{Thm:BCD-ConvThm} according to the following two cases.
	
	\textbf{Case 1: $\bcL(\calP^{k_0}) = \bcL(\calP^*)$ at some $k_0$.} In this case, by \Cref{Lemm:BCD-suff-desc1}, $\calP^k = \calP^{k_0} = \calP^*$ holds for all $k\geq k_0$, which implies the convergence of $\calP^k$ to a limit point $\calP^*$.
	
	\textbf{Case 2: $\bcL(\calP^k)>\bcL(\calP^*)$ for all $k\in \mathbb{N}$.} In this case, since $\calP^*$ is a limit point and $\bcL(\calP^k) \rightarrow \bcL(\calP^*)$, by \Cref{Thm:BCD-ConvThm-value}, there must exist an integer $k_0$ such that $\calP^{k_0}$ is sufficiently close to $\calP^*$ as required in \Cref{Lemm:local-finite-length} (see the inequality \eqref{Eq:initial-cond}). Therefore, the whole sequence $\{\calP^k\}_{k\in\NN}$ converges according to \Cref{Lemm:local-finite-length}. Since $\calP^*$ is a limit point of $\{\calP^k\}_{k\in\NN}$, we have $\calP^k \rightarrow \calP^*$.
	
	Next, we show $\calP^*$ is a critical point of $\bcL$.
	By \Cref{Coro:square-summable}(c), $\lim_{k\rightarrow \infty}\|\calP^k - \calP^{k-1}\|_F = 0$. Furthermore, by \Cref{Lemm:BCD-grad-bound1},
	\[
	\lim_{k\rightarrow \infty} \mathrm{dist}(\zero, \partial \bcL(\calP^k)) = 0,
	\]
	which implies that any limit point is a critical point. Therefore, we prove the global convergence of the sequence generated by \Cref{alg:BCD-3-split}.
	
	The convergence to a global minimum is a straightforward variant of \Cref{Lemm:local-finite-length}.

    The ${\cal O}(1/k)$ rate of convergence is a direct claim according to the proof of \Cref{Lemm:BCD-grad-bound1} and \Cref{Coro:square-summable}(c).
	
	The proof of the convergence of \Cref{alg:BCD-2-split} is similar to that of \Cref{alg:BCD-3-split}. We give a brief description about this.
	Note that in \Cref{alg:BCD-2-split}, all blocks of variables are updated via the proximal strategies (or, prox-linear strategy for $\bV_N$-block). Thus, it is easy to show the similar descent inequality, i.e.,
	\begin{align}
	\label{Eq:BCD-2-split-descent}
	\calL(\calQ^{k-1}) - \calL(\calQ^{k}) \geq a \|\calQ^k - \calQ^{k-1}\|_F^2,
	\end{align}
	for some $a>0$.
	Then similar to the proof of \Cref{Lemm:BCD-grad-bound1}, we can establish the following inequality via checking the optimality conditions of all subproblems in \Cref{alg:BCD-2-split}, i.e.,
	\begin{align}
	\label{Eq:BCD-2-split-grad-bound}
	\mathrm{dist}(\zero,\partial \calL (\calQ^k)) \leq b \|\calQ^k - \calQ^{k-1}\|_F,
	\end{align}
	for some $b>0$.
	By \eqref{Eq:BCD-2-split-descent}, \eqref{Eq:BCD-2-split-grad-bound} and the \KL property of $\calL$ (by \Cref{Thm:KL-property}), the global convergence of \Cref{alg:BCD-2-split} can be proved via a similar proof procedure of \Cref{alg:BCD-3-split}.
	\hfill$\Box$
\end{proof}

\subsubsection{Condition (b) or (c) implies condition (a)}
\label{app:estab-boundedness}
\begin{lemma}
	%[boundedness of $\{\calP^k\}$]
	\label{Lemm:boundednes}
	Under condition (b) or condition (c) of \Cref{Thm:BCD-ConvThm},
	$\calP^k$ is bounded for any $k\in \mathbb{N}$, and thus, there exists a convergent subsequence.
\end{lemma}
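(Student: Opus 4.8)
The plan is to exploit the monotone decrease established in \Cref{Lemm:BCD-suff-desc1}. Since $\calP^0$ is finite and $\bcL$ is nonincreasing along the iterates, the whole trajectory remains in the sublevel set $\{\calP : \bcL(\calP) \le \bcL(\calP^0)\}$, i.e. $\bcL(\calP^k) \le \bcL(\calP^0) < \infty$ for every $k$. Under condition (c), $\bcL$ is coercive, so this sublevel set is bounded and the boundedness of $\{\calP^k\}$ follows at once: were $\{\calP^k\}$ unbounded, coercivity would force $\bcL(\calP^k) \to +\infty$ along a subsequence, contradicting the uniform bound $\bcL(\calP^0)$.

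Under condition (b) the function $\bcL$ need not be coercive, so I would argue termwise using the explicit form of $\bcL$. Every summand of $\bcL$ is nonnegative, hence each is bounded above by $\bcL(\calP^0)$. In particular $r_i(\bW_i^k) \le \bcL(\calP^0)$ for all $i,k$, and since each $r_i$ is coercive this bounds the weight iterates $\{\bW_i^k\}_k$. Moreover the quadratic penalties satisfy $\|\bU_i^k - \bW_i^k \bV_{i-1}^k\|_F^2 \le \tfrac{2}{\gamma}\bcL(\calP^0)$ and $\|\bV_i^k - \sigma_i(\bU_i^k)\|_F^2 \le \tfrac{2}{\gamma}\bcL(\calP^0)$.

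The boundedness of the state and auxiliary variables is then obtained by a forward induction through the layers, anchored at the fixed input $\bV_0^k \equiv \bX$. Given that $\bV_{i-1}^k$ is bounded (the base case $i=1$ uses $\bV_0^k = \bX$), the product $\bW_i^k \bV_{i-1}^k$ is bounded because $\{\bW_i^k\}$ is bounded; the first penalty bound then forces $\bU_i^k$ to be bounded; the Lipschitz continuity of $\sigma_i$ on bounded sets (\Cref{Assumption-model}(b)) sends bounded sets to bounded sets, so $\sigma_i(\bU_i^k)$ is bounded; and the second penalty bound forces $\bV_i^k$ to be bounded. Iterating for $i=1,\dots,N$ bounds all three families $\{\bW_i^k\}, \{\bV_i^k\}, \{\bU_i^k\}$, hence $\{\calP^k\}$. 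In either case, once boundedness is in hand, Bolzano--Weierstrass in the finite-dimensional product space yields a convergent subsequence, which is precisely condition (a).

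The main obstacle is case (b): coercivity of the $r_i$ controls only the weights, and recovering control of $\calV$ and $\calU$ requires the layerwise propagation above. This step is delicate because it depends on three structural facts working together---the fixed bounded input $\bX$ anchoring the recursion, the uniform boundedness of the quadratic coupling terms, and the fact that activations which are Lipschitz on bounded sets send bounded sets to bounded sets---and it would break down if any one of these were removed.
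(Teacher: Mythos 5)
Your proposal is correct and follows essentially the same route as the paper's proof: condition (c) is handled by the coercivity of $\bcL$ applied to the sublevel set $\{\calP : \bcL(\calP) \le \bcL(\calP^0)\}$, and condition (b) is handled by first bounding the weights via the coercivity of the $r_i$ and then propagating boundedness forward through the layers, anchored at $\bV_0 = \bX$, using the bounded quadratic penalty terms together with the fact that activations Lipschitz on bounded sets map bounded sets to bounded sets. The only cosmetic difference is that you make the constants $\tfrac{2}{\gamma}\bcL(\calP^0)$ explicit where the paper simply writes the penalties as finite.
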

%Let $\cal B$ be a uniform bound of $\calP^k$, i.e., $\|\calP^k\|_F \leq \calB, \forall k \in \mathbb{N}.$
\begin{proof}
	We first show the boundedness of the sequence as well as the subsequence convergence  under \textbf{condition (b)} of \Cref{Thm:BCD-ConvThm}, then under \textbf{condition (c)} of \Cref{Thm:BCD-ConvThm}.
	
	\begin{enumerate}
		\item \textbf{Condition (b) implies condition (a):}
		We first establish the boundedness of $\bW_i^k$, $i=1,\ldots,N$. Then, recursively, we establish the boundedness of $\bU_i^k$ via the boudedness of $\bW_i^k$ and $\bV_{i-1}^k$ (noting that $\bV_0^k \equiv \bX$), followed by that of $\bV_i^k$ via the boundedness of $\bU_i^k$, $i=1,\ldots,N$.
		
		\begin{enumerate}[label=(\arabic*)]
			\item Boundedness of $\bW_i^k\ (i=1,\ldots,N)$: By \Cref{Lemm:BCD-suff-desc1}, $\bcL(\calP^k) < \infty$ for all $k\in \mathbb{N}$. Noting that each term of $\bcL$ is nonnegative, thus, $0\leq r_i(\bW_i^k)<\infty$ for any $k\in \mathbb{N}$ and $i=1,\ldots, N$. By the coercivity of $r_i$, $\bW_i^k$ is boundedness for any $k\in \mathbb{N}$ and $i=1,\ldots, N$.
			
			In the following, we establish the boundedness of $\bU_i^k$ for any $k\in \mathbb{N}$ and $i=1,\ldots, N$.
			
			\item $i=1$: Since $\bcL(\calP^k) < \infty$, then $\|\bU_1^k - \bW_1^k \bX\|_F^2 <\infty$ for any $k\in \mathbb{N}$. By the boundedness of $\bW_1^k$ and the coercivity of the function $\|\cdot\|_F^2$, we have the boundedness of $\bU_1^k$ for any $k\in \mathbb{N}$.
			Then we show the boundedness of $\bV_1^k$ by the boundedness of $\bU_1^k$. Since $\bcL(\calP^k) < \infty$, then $\|\bV_1^k - \sigma_1(\bU_1^k)\|_F^2 <\infty$ for any $k\in \mathbb{N}$. By the Lipschitz continuity of $\sigma_1$ and the boundedness of $\bU_1^k$, $\sigma_1(\bU_1^k)$ is uniformly bounded for any $k\in \mathbb{N}$. Thus, by the coercivity of $\|\cdot\|_F^2$, $\bV_1^k$ is bounded for any $k\in \mathbb{N}$.
			
			\item $i>1$: Recursively, we show that the boundedness of $\bW_i^k$ and $\bV_{i-1}^k$ implies the boundedness of $\bU_i^k$, and then the boundedness of $\bV_i^k$ from $i=2$ to $N$.
			
			Now, we assume that the boundedness of $\bV_{i-1}^k$ has been established. Similar to \textbf{(2)}, the boundedness of $\bU_i^k$ is guaranteed by $\|\bU_i^k - \bW_i^k \bV_{i-1}^k\|_F^2 <\infty$ and the boundedness of $\bW_i^k$ and $\bV_{i-1}^k$. The boundedness of $\bV_i^k$ is guaranteed by $\|\bV_i^k - \sigma_i(\bU_i^k)\|_F^2<\infty$ and the boundedness of $\bU_i^k$, as well as the Lipschitz continuity of $\sigma_i$.
	
			As a consequence, we prove the boundedness of $\{\calP^k\}_{k\in\NN}$ under condition (b), which implies the subsequence convergent.
		\end{enumerate}

		\item \textbf{Condition (c) implies condition (a):} By \Cref{Lemm:BCD-suff-desc1} and the finite initialization assumption, we have
		\[{\bcL(\calP^k)} \leq {\bcL}(\calP^0)<\infty,\]
		which implies the boundedness of $\calP^k$ due to the coercivity of $\bcL$ (i.e., condition (c)), and thus, there exists a convergent subsequence.
	\end{enumerate}
	
	This completes the proof of this lemma.
\hfill$\Box$
\end{proof}

\section{Proof of \Cref{Coro:BCD-resnet}}
\label{sc:proof-convergence-BCD-ResNets}
%The BCD method for the DNN learning with ResNets is presented in Algorithm \ref{alg:BCD-resnet}.

\begin{algorithm}[t]
{\small
\begin{algorithmic}\caption{BCD for DNN Training with ResNets \eqref{Eq:dnn-resnet}}\label{alg:BCD-resnet}
\STATE {\bf Samples}: $\bX \in \RR^{d_0 \times n}$, $\bY \in \RR^{d_N \times n}$, $\bV_0^k \equiv \bV_0 := \bX$
\STATE {\bf Initialization}: $\{\bW_i^0, \bV_i^0, \bU_i^0\}_{i=1}^N$
\STATE {\bf Parameters:} $\gamma>0$, $\alpha>0$
\smallskip
\FOR{$k=1,\ldots$}
\STATE $\bV_N^k = \argmin_{\bV_N} \ \{ s_N(\bV_N) + \calR_n(\bV_N; \bY) + \frac{\gamma}{2} \|\bV_N - \bV_{N-1}^{k-1} - \bU_N^{k-1}\|_F^2 + \frac{\alpha}{2} \|\bV_N - \bV_N^{k-1}\|_F^2\}$
\STATE $\bU_N^k = \argmin_{\bU_N} \ \{\frac{\gamma}{2}(\|\bV_N^k - \bV_{N-1}^{k-1} -  \bU_N\|_F^2 + \|\bU_N - \bW_N^{k-1}\bV_{N-1}^{k-1}\|_F^2)\}$
\STATE $\bW_N^k = \argmin_{\bW_N} \ \{r_N(\bW_N)+\frac{\gamma}{2} \|\bU_N^k - \bW_N \bV_{N-1}^{k-1}\|_F^2 + \frac{\alpha}{2}\|\bW_N - \bW_N^{k-1}\|_F^2\}$
\FOR{$i= N-1,\ldots, 1$}
\STATE $\bV_i^k = \argmin_{\bV_i} \ \{s_i(\bV_i) + \frac{\gamma}{2} (\|\bV_i - \bV_{i-1}^{k-1} - \sigma_i(\bU_i^{k-1})\|_F^2 + \|\bV_{i+1}^k - \bV_{i} - \sigma_{i+1}(\bU_{i+1}^{k})\|_F^2+ \|\bU_{i+1}^k - \bW_{i+1}^k \bV_i\|_F^2)\}$
\STATE $\bU_i^k = \argmin_{\bU_i}\ \{\frac{\gamma}{2}(\|\bV_i^k - \bV_{i-1}^{k-1} - \sigma_i(\bU_i)\|_F^2 + \|\bU_i - \bW_i^{k-1}\bV_{i-1}^{k-1}\|_F^2) + \frac{\alpha}{2}\|\bU_i -\bU_i^{k-1}\|_F^2\}$
\STATE $\bW_i^k = \argmin_{\bW_i} \{ r_i(\bW_i) + \frac{\gamma}{2} \|\bU_i^k - \bW_iV_{i-1}^{k-1}\|_F^2 + \frac{\alpha}{2} \|\bW_i - \bW_i^{k-1}\|_F^2\}$
\ENDFOR
%\STATE $k \leftarrow k+1$
\ENDFOR
\end{algorithmic}}
\end{algorithm}

The proof of \Cref{Coro:BCD-resnet} is very similar to those of \Cref{Lemm:BCD-suff-desc1} and \Cref{Thm:BCD-ConvThm}, by noting that the updates are slightly different.
In the following, we present the proof of \Cref{Coro:BCD-resnet}.

\Cref{Lemm:BCD-suff-desc1} still holds for \Cref{alg:BCD-resnet} via replacing $\bcL$ with $\bcL_{\res}$,
which is stated as the following lemma.

\begin{lemma}
\label{Lemm:BCD-suff-desc-resnet}
Let $\{\{\bW_i^k, \bV_i^k, \bU_i^k\}_{i=1}^N\}_{k\in\NN}$ be a sequence generated by the BCD method (\Cref{alg:BCD-resnet}) for the DNN training with ResNets,
then for any $\gamma>0, \alpha>0$,
\begin{multline}
\label{Eq:BCD-suff-desc-resnet}
\bcL_{\res}\left( \{\bW_i^k,\bV_i^k,\bU_i^k\}_{i=1}^N\right)
\leq \bcL_{\res}\left( \{\bW_i^{k-1},\bV_i^{k-1},\bU_i^{k-1}\}_{i=1}^N\right)  \\
%\nonumber\\
- a\sum_{i=1}^N \left[  \|\bW_i^k - \bW_i^{k-1}\|_F^2 + \|\bV_i^k - \bV_i^{k-1}\|_F^2 + \|\bU_i^k - \bU_i^{k-1}\|_F^2 \right] ,
\end{multline}
and $\left\lbrace \bcL_{\res}\left( \{\bW_i^k,\bV_i^k,\bU_i^k\}_{i=1}^N\right) \right\rbrace _{k\in\NN}$ converges to some $\bcL_{\res}^*,$
where $a:=\min\left\lbrace \frac{\alpha}{2},\frac{\gamma}{2}\right\rbrace $.
%Moreover, the running best rate of the sequence $\{\|W_i^k - W_i^{k-1}\|_F^2 + \|V_i^k - V_i^{k-1}\|_F^2 + \|U_i^k - U_i^{k-1}\|_F^2\}$ is $o(\frac{1}{k})$.

%Furthermore, if $\bcL_{\res}$ is coercive, then the generated sequence $\{\{\bW_i^k, \bV_i^k, \bU_i^k\}_{i=1}^N\}$ is bounded.
%Let ${\cal B} (\geq \|\bX\|_F)$ be its uniform bound, i.e., $\|\bW_i^k\|_F, \|\bV_i^k\|_F, \|\bU_i^k\|_F \leq {\cal B}, \ \forall i, \forall k.$
\end{lemma}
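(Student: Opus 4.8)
The plan is to mirror the proof of \Cref{Lemm:BCD-suff-desc1}, since \Cref{alg:BCD-resnet} uses the same Gauss--Seidel update order as \Cref{alg:BCD-3-split} and differs only in that the skip connection replaces each state term $\bV_i$ by the residual $\bV_i - \bV_{i-1}$ inside $\bcL_{\res}$. As before, I would assemble the sufficient-decrease inequality \eqref{Eq:BCD-suff-desc-resnet} from the $3N$ individual coordinate updates in one sweep: each block is updated either by a \emph{proximal} step carrying an explicit $\frac{\alpha}{2}\|\cdot - (\cdot)^{k-1}\|_F^2$ term, or by exactly minimizing a subproblem that is \emph{strongly convex} with modulus at least $\gamma$. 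Writing $\bcL_{\res}$ with every block but the active one frozen at its most recent iterate, the descent produced by advancing a single coordinate from its $(k-1)$- to its $k$-value telescopes along the sweep to $\bcL_{\res}(\calP^{k-1}) - \bcL_{\res}(\calP^k)$, giving the claimed bound with $a = \min\{\alpha/2,\gamma/2\}$, where $\calP^k = \{\bW_i^k,\bV_i^k,\bU_i^k\}_{i=1}^N$.

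First I would classify the blocks exactly as in the three-splitting case. The updates of $\bV_N$, $\{\bU_i\}_{i=1}^{N-1}$ and $\{\bW_i\}_{i=1}^{N}$ each contain a $\frac{\alpha}{2}\|\cdot\|_F^2$ proximal regularizer; splitting the subproblem objective as $\bar h^k(\cdot) = h^k(\cdot)+\frac{\alpha}{2}\|\cdot - (\cdot)^{k-1}\|_F^2$ and invoking optimality $\bar h^k((\cdot)^{k-1}) \ge \bar h^k((\cdot)^k)$ yields the blockwise descent $h^k((\cdot)^{k-1}) \ge h^k((\cdot)^k) + \frac{\alpha}{2}\|(\cdot)^k - (\cdot)^{k-1}\|_F^2$, precisely as in \eqref{Eq:descent-Wi-h}. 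The remaining blocks $\bU_N$ and $\{\bV_i\}_{i=1}^{N-1}$ are minimizers of strongly convex subproblems: the $\bU_N$-objective is a sum of two $\gamma$-weighted quadratics in $\bU_N$, and the $\bV_i$-objective adds the convex $s_i$ to the two squared terms $\|\bV_i - \bV_{i-1}^{k-1} - \sigma_i(\bU_i^{k-1})\|_F^2$ and $\|\bV_{i+1}^k - \bV_i - \sigma_{i+1}(\bU_{i+1}^k)\|_F^2$, each contributing $\gamma$ to the Hessian, so the modulus is at least $\gamma$ (indeed $\ge 2\gamma$, but $\gamma$ suffices for the stated $a$). Strong convexity then gives a descent of at least $\frac{\gamma}{2}\|(\cdot)^k-(\cdot)^{k-1}\|_F^2$ along each such block, as in \eqref{Eq:descent-Vi-h}.

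The step I expect to be the crux is verifying that each subproblem in \Cref{alg:BCD-resnet} coincides, up to an additive constant independent of the active block, with $\bcL_{\res}$ evaluated with all other blocks held at their current values. This is exactly where the residual structure must be handled carefully: the variable $\bV_i$ now appears in \emph{two} squared terms of $\bcL_{\res}$, the $i$-th residual $\|\bV_i - \bV_{i-1} - \sigma_i(\bU_i)\|_F^2$ and the $(i+1)$-st residual $\|\bV_{i+1} - \bV_i - \sigma_{i+1}(\bU_{i+1})\|_F^2$, in addition to the coupling term $\|\bU_{i+1} - \bW_{i+1}\bV_i\|_F^2$. The $\bV_i$-update in \Cref{alg:BCD-resnet} retains all three, so its restriction is genuinely that of $\bcL_{\res}$, unlike in the plain three-splitting case where only one state term survives. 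Once this identification is confirmed for every block, the $3N$ blockwise inequalities chain together and telescope to \eqref{Eq:BCD-suff-desc-resnet}. Finally, since each summand of $\bcL_{\res}$ is nonnegative, $\bcL_{\res}$ is bounded below by $0$, and \eqref{Eq:BCD-suff-desc-resnet} makes $\{\bcL_{\res}(\calP^k)\}_{k\in\NN}$ nonincreasing; hence it converges to some finite $\bcL_{\res}^* \ge 0$, completing the lemma.
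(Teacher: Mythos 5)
Your proposal is correct and follows exactly the route the paper takes: the paper's own proof of this lemma simply states that it is identical to that of \Cref{Lemm:BCD-suff-desc1}, and your argument is precisely that proof transported to \Cref{alg:BCD-resnet}, with the same block classification (proximal updates for $\bV_N$, $\{\bU_i\}_{i=1}^{N-1}$, $\{\bW_i\}_{i=1}^{N}$ versus strongly convex subproblems for $\bU_N$, $\{\bV_i\}_{i=1}^{N-1}$) and the same telescoping over one sweep. Your extra care in checking that each $\bV_i$-subproblem of \Cref{alg:BCD-resnet} really is the restriction of $\bcL_{\res}$ (now involving two residual terms plus the coupling term) is exactly the point that makes the verbatim transfer legitimate, so the argument is complete.
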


\begin{proof}
The proof of this lemma is the same as that of \Cref{Lemm:BCD-suff-desc1}.
\hfill$\Box$
\end{proof}

In the ResNets case, \Cref{Lemm:BCD-grad-bound1} should be revised as the following lemma.

\begin{lemma}
\label{Lemm:BCD-grad-bound-resnet}
Let $\{\{\bW_i^k, \bV_i^k, \bU_i^k\}_{i=1}^N\}_{k\in\NN}$ be a sequence generated by \Cref{alg:BCD-resnet}.
Under Assumptions of \Cref{Coro:BCD-resnet},
%(where $\bcL$ should be replaced by $\bcL_{\res}$),
let $\bar{b}:= \max\{\alpha + \gamma L, \alpha + \gamma {\cal B}, 2\gamma (1+{\cal B} + {\cal B}^2), \gamma (1+L{\cal B} + 2{\cal B} + 2{\cal B}^2)\}$,
%Under assumptions of Lemma \ref{Lemm:BCD-suff-desc},
then
\begin{equation}
\label{Eq:BCD-grad-bound-resnet}
\mathrm{dist}(\zero,\partial \bcL_{\res}(\{\bW_i^k,\bV_i^k,\bU_i^k\}_{i=1}^N))
\leq \bar{b} \sum_{i=1}^N \left[\|\bW_i^k - \bW_i^{k-1}\|_F+ \|\bV_i^k - \bV_i^{k-1}\|_F + \|\bU_i^k - \bU_i^{k-1}\|_F\right],
\end{equation}
where
\begin{align*}
&\partial \bcL_{\res}(\{\bW_i^k,\bV_i^k,\bU_i^k\}_{i=1}^N)
:= (\{\partial_{\bW_i}\bcL_{\res}, \partial_{\bV_i} \bcL_{\res}, \partial_{\bU_i} \bcL_{\res} \}_{i=1}^N)(\{\bW_i^k,\bV_i^k,\bU_i^k\}_{i=1}^N).
\end{align*}
\end{lemma}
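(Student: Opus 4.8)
The plan is to follow the proof of \Cref{Lemm:BCD-grad-bound1} almost verbatim, adapting the bookkeeping to the residual objective $\bcL_{\res}$. First I would write out the first-order optimality condition $\zero \in \partial(\text{subproblem})$ for every block of \Cref{alg:BCD-resnet}: the output blocks $\bV_N^k,\bU_N^k,\bW_N^k$, and then $\bV_i^k,\bU_i^k,\bW_i^k$ for $i=N-1,\ldots,1$. As in the base case these conditions mix freshly updated blocks with stale ones (e.g.\ the $\bU_i$-update uses $\bV_{i-1}^{k-1}$ and $\bW_i^{k-1}$), and since $\sigma_N\equiv\Id$ the output residual $\|\bV_N-\bV_{N-1}-\sigma_N(\bU_N)\|_F^2$ reduces to $\|\bV_N-\bV_{N-1}-\bU_N\|_F^2$, matching the algorithm.

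Next, for each block I would write the corresponding partial subdifferential of $\bcL_{\res}$ evaluated entirely at the current iterate $\calP^k=\{\bW_i^k,\bV_i^k,\bU_i^k\}_{i=1}^N$, and subtract the optimality identity from it. Because the two expressions share the same $\partial r_i$, $\partial s_i$, $\partial\calR_n$ terms and the same selection from $\partial\sigma_i(\bU_i^k)$, this cancellation leaves only linear combinations of the consecutive differences $\bW_i^k-\bW_i^{k-1}$, $\bV_i^k-\bV_i^{k-1}$, $\bU_i^k-\bU_i^{k-1}$, each producing an explicit element $\calG_\bullet^k\in\partial_\bullet\bcL_{\res}(\calP^k)$. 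Every such element is then bounded using the uniform bound $\|\calP\|_F\le\calB$, the $L$-Lipschitz continuity of $\sigma_i$, and the split $\bW_i^k\bV_{i-1}^k-\bW_i^{k-1}\bV_{i-1}^{k-1}=(\bW_i^k-\bW_i^{k-1})\bV_{i-1}^{k-1}+\bW_i^k(\bV_{i-1}^k-\bV_{i-1}^{k-1})$ to linearize the bilinear terms. The $\bW_i$-subproblems are unchanged from \Cref{alg:BCD-3-split}, so their estimates are identical to the base case.

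The genuinely new features come entirely from the residual term $\|\bV_i-\bV_{i-1}-\sigma_i(\bU_i)\|_F^2$. The subtle point is that $\bV_i$ now appears in three quadratic terms of its own subproblem: its layer's residual, the next layer's residual $\|\bV_{i+1}^k-\bV_i-\sigma_{i+1}(\bU_{i+1}^k)\|_F^2$, and the coupling $\|\bU_{i+1}^k-\bW_{i+1}^k\bV_i\|_F^2$. The latter two are built with all-$k$ indices, exactly as they appear in $\partial_{\bV_i}\bcL_{\res}(\calP^k)$, so they cancel identically in the subtraction and inject no new difference terms, keeping the $\bV_i$-estimate clean. What does change is that both the $\bV_i$- and $\bU_i$-updates evaluate the residual at the stale $\bV_{i-1}^{k-1}$ while $\partial\bcL_{\res}(\calP^k)$ uses $\bV_{i-1}^k$; each estimate therefore acquires an extra $\gamma\|\bV_{i-1}^k-\bV_{i-1}^{k-1}\|_F$ contribution absent from the non-residual analysis, and for $\bU_i$ (and $\bU_N$) this enters both directly and through the Hadamard product $(\bV_{i-1}^k-\bV_{i-1}^{k-1})\odot g$ with a selection $g\in\partial\sigma_i(\bU_i^k)$, $\|g\|_\infty\le L$. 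These residual contributions are exactly what shift the base-case coefficient $\gamma\calB$ to the $+1$-shifted forms $\gamma(1+\calB)$, $2\gamma(1+\calB+\calB^2)$ and $\gamma(1+L\calB+2\calB+2\calB^2)$ collected inside $\bar b$.

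Finally I would sum the per-block estimates over all $3N$ blocks and take the worst-case coefficient $\bar b=\max\{\alpha+\gamma L,\ \alpha+\gamma\calB,\ 2\gamma(1+\calB+\calB^2),\ \gamma(1+L\calB+2\calB+2\calB^2)\}$, which yields \eqref{Eq:BCD-grad-bound-resnet}; the prox-linear $\bV_N$-variant is handled exactly as in \Cref{Lemm:BCD-grad-bound1}, replacing $\partial\calR_n(\bV_N^k;\bY)$ by $\nabla\calR_n(\bV_N^{k-1};\bY)$ and absorbing the Lipschitz gap. The only real obstacle is the careful accounting of the additional $\bV_{i-1}$-difference terms introduced by the residual connections: once each of these is matched term-by-term against the corresponding subproblem optimality condition, the bounding is routine and entirely parallel to \Cref{Lemm:BCD-grad-bound1}.
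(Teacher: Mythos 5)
Your proposal matches the paper's proof essentially step for step: writing the per-block optimality conditions of \Cref{alg:BCD-resnet}, rewriting each as an explicit element of the corresponding partial subdifferential of $\bcL_{\res}$ at the current iterate, and bounding the resulting difference terms via the uniform bound $\calB$, the Lipschitz constant $L$ of the activations, and the bilinear split of $\bW_i\bV_{i-1}$. You also correctly isolate the only genuinely new ingredient — the extra $\gamma(\bV_{i-1}^k-\bV_{i-1}^{k-1})$ contributions (including the one entering through the Hadamard product with a selection from $\partial\sigma_i(\bU_i^k)$) produced by the residual connections — which is exactly what the paper's proof does to arrive at the shifted coefficients in $\bar b$.
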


\begin{proof}
From updates of \Cref{alg:BCD-resnet},
\begin{align*}
& \zero \in \partial s_N(\bV_N^k) + \partial {\cal R}_n(\bV_N^k; \bY) + \gamma (\bV_N^k - \bV_{N-1}^{k-1} - \bU_N^{k-1})
 + \alpha (\bV_N^k - \bV_N^{k-1}),\\
%\label{Eq:BCD-VT-optcond}\\
%\end{align*}
%\begin{align*}
& \zero = \gamma (\bU_N^k + \bV_{N-1}^{k-1}- \bV_N^k)+ \gamma (\bU_N^k - \bW_N^{k-1}\bV_{N-1}^{k-1}), \\
%\label{Eq:BCD-UT-optcond}\\
& \zero \in \partial r_N(\bW_N^k) + \gamma (\bW_N^k \bV_{N-1}^{k-1} - \bU_N^k) {\bV_{N-1}^{k-1}}^\top
+ \alpha (\bW_N^k - \bW_N^{k-1}),\\
%\label{Eq:BCD-WT-optcond}\\
%\end{align*}
%\begin{align*}
&\text{for}\ i= N-1,\ldots, 1, \nonumber\\
& \zero \in \partial s_i(\bV_i^k) + \gamma (\bV_i^k - \bV_{i-1}^{k-1} - \sigma_i(\bU_i^{k-1}))
- \gamma (\bV_{i+1}^k - \bV_i^k - \sigma_{i+1}(\bU_{i+1}^k))
+ \gamma {\bW_{i+1}^k}^\top (\bW_{i+1}^k \bV_i^k - \bU_{i+1}^k), \\
%\label{Eq:BCD-Vi-optcond}\\
& \zero \in  \gamma [(\sigma_i(\bU_i^k) + \bV_{i-1}^{k-1} - \bV_i^k)\odot \partial \sigma_i(\bU_i^k)]
 + \gamma (\bU_i^k - \bW_i^{k-1} \bV_{i-1}^{k-1}) + \alpha(\bU_i^k - \bU_i^{k-1}), \\
%\label{Eq:BCD-Ui-optcond}\\
& \zero \in  \partial r_i(\bW_i^k) + \gamma (\bW_i^k \bV_{i-1}^{k-1}- \bU_i^k){\bV_{i-1}^{k-1}}^\top
+ \alpha (\bW_i^k - \bW_i^{k-1}),
%\label{Eq:BCD-Wi-optcond}
\end{align*}
where $\bV_0^k \equiv \bV_0 = \bX$, for all $k$, and $\odot$ is the Hadamard product.
By the above relations, we have
\begin{align*}
&-\alpha (\bV_N^k - \bV_N^{k-1}) - \gamma (\bV_{N-1}^k - \bV_{N-1}^{k-1})-\gamma (\bU_N^k - \bU_N^{k-1}) \\
&\in \partial s_N(\bV_N^k) + \partial {\cal R}_n(\bV_N^k; \bY) + \gamma (\bV_N^k - \bV_{N-1}^k- \bU_N^{k})
= \partial_{\bV_N} \bcL_{\res}(\{\bW_i^k,\bV_i^k,\bU_i^k\}_{i=1}^N),\\
%\label{Eq:BCD-VT-optcond}\\
%\end{align*}
%\begin{align*}
&-\gamma (\bW_N^k - \bW_N^{k-1})\bV_{N-1}^k - \gamma \bW_N^{k-1}(\bV_{N-1}^k - \bV_{N-1}^{k-1})
+ \gamma (\bV_{N-1}^k - \bV_{N-1}^{k-1}) \\
&= \gamma (\bU_N^k + \bV_{N-1}^k - \bV_N^k)+ \gamma (\bU_N^k - \bW_N^{k} \bV_{N-1}^{k})
= \partial_{\bU_N} \bcL_{\res}(\{\bW_i^k, \bV_i^k, \bU_i^k\}_{i=1}^N),\\
%\label{Eq:BCD-UT-optcond}\\
%\end{align*}
%\begin{align*}
&\gamma {\bW_N^k}\left[ \bV_{N-1}^k ({\bV_{N-1}^k} - {\bV_{N-1}^{k-1}})^\top + (\bV_{N-1}^k - \bV_{N-1}^{k-1}){\bV_{N-1}^{k-1}}^\top\right]
- \gamma \bU_N^k({\bV_{N-1}^k} - {\bV_{N-1}^{k-1}})^\top - \alpha (\bW_N^k - \bW_N^{k-1}) \\
& \qquad\qquad\qquad\qquad\qquad\qquad\quad\in \partial r_N(\bW_N^k) + \gamma (\bW_N^k \bV_{N-1}^k - \bU_N^k){\bV_{N-1}^k}^\top
= \partial_{\bW_N}\bcL_{\res}(\{\bW_i^k,\bV_i^k,\bU_i^k\}_{i=1}^N),\\
%\label{Eq:BCD-UT-optcond}\\
\end{align*}
For $i= N-1,\ldots, 1$, and $\bm{\xi}_i^k \in \partial \sigma_i(\bU_i^k)$,
\begin{align*}
&-\gamma (\bV_{i-1}^k - \bV_{i-1}^{k-1})-\gamma (\sigma_i(\bU_i^k) - \sigma_i(\bU_i^{k-1}))\\
&\qquad\quad \in \partial s_i(\bV_i^k) + \gamma (\bV_i^k - \bV_{i-1}^{k} - \sigma_i(\bU_i^{k}))
- \gamma (\bV_{i+1}^k - \bV_i^k - \sigma_{i+1}(\bU_{i+1}^k))
+ \gamma {\bW_{i+1}^k}^\top (\bW_{i+1}^k \bV_i^k - \bU_{i+1}^k)\\
&\qquad\quad =  \partial_{\bV_i} \bcL_{\res}(\{\bW_i^k,\bV_i^k,\bU_i^k\}_{i=1}^N),\\
%\label{Eq:BCD-Vi-optcond}\\
%\end{align*}
%\begin{align*}
&-\gamma \bW_i^{k-1}(\bV_{i-1}^k - \bV_{i-1}^{k-1}) -\gamma (\bW_i^k - \bW_i^{k-1})\bV_{i-1}^k
-\alpha(\bU_i^k -\bU_i^{k-1}) + \gamma (\bV_{i-1}^k - \bV_{i-1}^{k-1})\odot \bm{\xi}_i^k \nonumber\\
&\qquad\qquad\qquad\quad\in \gamma [(\sigma_i(\bU_i^k) + \bV_{i-1}^k -\bV_i^k)\odot \partial \sigma_i(\bU_i^k)] + \gamma (\bU_i^k - \bW_i^{k} \bV_{i-1}^{k}) = \partial_{\bU_i} \bcL_{\res}(\{\bW_i^k, \bV_i^k, \bU_i^k\}_{i=1}^N),
%\label{Eq:BCD-Ui-optcond}\\
\end{align*}
\begin{align*}
& \gamma {\bW_i^k}\left[ \bV_{i-1}^k ({\bV_{i-1}^k} - {\bV_{i-1}^{k-1}})^\top + (\bV_{i-1}^k - \bV_{i-1}^{k-1}){\bV_{i-1}^{k-1}}^\top\right]
- \gamma \bU_i^k({\bV_{i-1}^k} - {\bV_{i-1}^{k-1}})^\top  - \alpha (\bW_i^k - \bW_i^{k-1}) \nonumber\\
& \qquad\qquad\qquad\qquad\qquad\qquad\qquad\qquad\qquad\qquad\qquad\qquad\qquad\qquad\qquad\in \partial r_i(\bW_i^k) + \gamma (\bW_i^k \bV_{i-1}^k - \bU_i^k){\bV_{i-1}^k}^\top \nonumber\\
&\qquad\qquad\qquad\qquad\qquad\qquad\qquad\qquad\qquad\qquad\qquad\qquad\qquad\qquad\qquad = \partial_{\bW_i}\bcL_{\res}(\{\bW_i^k,\bV_i^k,\bU_i^k\}_{i=1}^N).
%\label{Eq:BCD-Wi-optcond}
\end{align*}
From the above relations, the uniform boundedness of the generated sequence (whose bound is ${\cal B}$) and the Lipschitz continuity of the activation function by the hypothesis of this lemma, we have $\|\bm{\xi}_i^k\|\leq L{\cal B}$, and further we get \eqref{Eq:BCD-grad-bound-resnet}.
\hfill$\Box$
\end{proof}

\begin{proof}[Proof of \Cref{Coro:BCD-resnet}]
The proof of this theorem is very similar to that of \Cref{Thm:BCD-ConvThm}.
First, similar to \Cref{Thm:KL-property}, it is easy to show that $\bcL_{\res}$ is also a \KL function.
Then, based on \Cref{Lemm:BCD-suff-desc-resnet} and \Cref{Lemm:BCD-grad-bound-resnet}, and the \KL property of $\bcL_{\res}$, we can prove this corollary by \citet[Theorem 2.9]{Attouch2013}.
The other claims of this theorem follow from the same proof of \Cref{Thm:BCD-ConvThm}.
When the prox-linear strategy is adopted for the $\bV_N$-update, the claims of \Cref{Coro:BCD-resnet} can be proved via following the same proof of \Cref{Coro:globalconv-prox-linear1}.
\hfill$\Box$
\end{proof}

%\section{Proof of \Cref{Lemm:solution-relu-min}}
\section{Closed form solutions of some subproblems}
\label{sc:closed-form-solution}
In this section, we provide the closed form solutions to the ReLU involved subproblem and the hinge loss involved subproblem.

\subsection{Closed form solution to ReLU-subproblem}
\label{app:proof_lemma_1}
%\begin{proof}
%\textbf{Closed form solution of ReLU-subproblem:}
From \Cref{alg:BCD-3-split}, when $\sigma_i$ is ReLU, then the $\bU_i^k$-update actually reduces to the following one-dimensional minimization problem,
\begin{align}
\label{Eq:ReLU-min}
u^* = \argmin_u f(u) := \frac{1}{2}(\sigma(u)-a)^2 + \frac{\gamma}{2}(u-b)^2,
\end{align}
where $\sigma(u) = \max\{0,u\}$ and $\gamma>0$.
The solution to the above one-dimensional minimization problem can be presented in the following lemma.
\begin{lemma}
	\label{Lemm:solution-relu-min}
	The optimal solution to Problem \eqref{Eq:ReLU-min} is shown as follows
	\begin{equation*}
	\mathrm{prox}_{\frac{1}{2\gamma}(\sigma(\cdot)-a)^2}(b) =
	\begin{cases}
	\dfrac{a+\gamma b}{1+\gamma}, \ & \text{if}\ a + \gamma b \geq 0, \ b \geq 0, \\[4mm]
	\dfrac{a+\gamma b}{1+\gamma}, \ & \text{if}\ -(\sqrt{\gamma(\gamma+1)}-\gamma)a \leq \gamma b <0,\\[4mm]
	b, \ & \text{if}\ -a \leq \gamma b \leq - (\sqrt{\gamma (\gamma +1)} - \gamma)a<0, \\
	\min\{b,0\}, \ & \text{if} \ a + \gamma b <0.
	\end{cases}
	\end{equation*}
\end{lemma}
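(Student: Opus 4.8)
The plan is to exploit the piecewise-linear structure of $\sigma(u)=\max\{0,u\}$: I would minimize $f$ separately on the two half-lines $u\ge 0$ and $u\le 0$, on each of which $f$ restricts to a one-dimensional strongly convex quadratic, and then compare the two resulting candidate values to select the global minimizer.

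First I would split the domain. On $u\ge 0$ we have $\sigma(u)=u$, so $f(u)=\tfrac12(u-a)^2+\tfrac{\gamma}{2}(u-b)^2$, whose unconstrained minimizer is $u_1:=\tfrac{a+\gamma b}{1+\gamma}$; this lies in $[0,\infty)$ exactly when $a+\gamma b\ge 0$, and otherwise the constrained minimizer over $u\ge 0$ is the boundary point $0$. On $u\le 0$ we have $\sigma(u)=0$, so $f(u)=\tfrac12 a^2+\tfrac{\gamma}{2}(u-b)^2$, whose minimizer is $b$; this lies in $(-\infty,0]$ exactly when $b\le 0$, and otherwise the constrained minimizer is again $0$. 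A direct substitution then yields the three relevant values $f(u_1)=\tfrac{\gamma(a-b)^2}{2(1+\gamma)}$, $f(b)=\tfrac12 a^2$ (used when $b\le 0$), and $f(0)=\tfrac12 a^2+\tfrac{\gamma}{2}b^2$; note that $f$ is continuous at $0$ since both pieces agree there.

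Next I would assemble the cases. When $a+\gamma b<0$, the interior region-$1$ critical point $u_1$ is infeasible, so the region-$1$ minimum sits at $0$; since $f(0)\ge f(\min\{b,0\})$ always, the global minimizer is $\min\{b,0\}$, which is the fourth case. When $a+\gamma b\ge 0$ and $b\ge 0$, continuity of $f$ at $0$ forces $f(u_1)\le f(0)$, so $u_1$ beats the region-$2$ boundary candidate and $u^*=\tfrac{a+\gamma b}{1+\gamma}$, the first case. The genuinely interesting regime is $a+\gamma b\ge 0$ together with $b<0$ (which in particular forces $a>0$): here both half-lines carry honest interior minimizers $u_1$ and $b$, and the global minimizer is decided purely by comparing $f(u_1)$ with $f(b)$.

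The main obstacle is precisely this value comparison. I expect $f(u_1)\le f(b)$ to reduce, after clearing denominators, to the quadratic inequality $a^2+2\gamma ab-\gamma b^2\ge 0$; viewing the left side as a quadratic in the quantity $\gamma b$ and locating its single relevant (negative) root produces the threshold $\gamma b=-(\sqrt{\gamma(\gamma+1)}-\gamma)a$. This threshold separates the second case, where $u_1$ wins, from the third case, where $b$ wins, and the two stated conditions overlap exactly at the tie $f(u_1)=f(b)$, where either point is optimal, so the slight double-counting at that boundary is harmless. The only delicate bookkeeping is tracking the sign of $a$ and checking that the nonrelevant positive root $\gamma b=a(\sqrt{\gamma(\gamma+1)}+\gamma)$ never enters, which holds because we are restricted to $\gamma b<0$ throughout this regime.
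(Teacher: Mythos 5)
Your proposal is correct and follows essentially the same route as the paper's proof: split at $u=0$, minimize the two constrained quadratics, and decide the global minimizer by comparing $f\bigl(\tfrac{a+\gamma b}{1+\gamma}\bigr)$, $f(b)$, and $f(0)$ over the four sign regimes of $a+\gamma b$ and $b$. In fact you spell out the one step the paper leaves as ``easily checked''---the reduction of $f(u_1)\le f(b)$ to $a^2+2\gamma ab-\gamma b^2\ge 0$ and the extraction of the threshold $\gamma b=-(\sqrt{\gamma(\gamma+1)}-\gamma)a$---so no gap remains.
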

%The proof of this lemma is provided in \Cref{app:proof_lemma_1}.
%\textbf{Proof.}
\begin{proof}
	
	In the following, we divide this into two cases.
	\begin{enumerate}[label=(\alph*)]
		\item $u\ge0$: In this case,
		\[
		f(u) = \frac{1}{2} (u-a)^2 + \frac{\gamma}{2}(u-b)^2.
		\]
		It is easy to check that
		\begin{equation}
		\label{Eq:solution1-relu}
		u^* =
		\begin{cases}
		\dfrac{a+\gamma b}{1+\gamma}, \ & \text{if}\ a+\gamma b \geq 0\\[2mm]
		0, \ & \text{if}\ a+\gamma b < 0
		\end{cases},
		\end{equation}
		and
		\[
		f\left(\frac{a+\gamma b}{1+\gamma}\right) = \frac{\gamma}{2(1+\gamma)}(b-a)^2, \quad f(0) = \frac{1}{2}a^2 + \frac{\gamma}{2} b^2.
		\]
		
		\item $u<0$: In this case,
		\[
		f(u) = \frac{1}{2} a^2 + \frac{\gamma}{2}(u-b)^2.
		\]
		It is easy to check that
		\begin{equation}
		\label{Eq:solution2-relu}
		u^* =
		\begin{cases}
		0, \ & \text{if}\ b \geq 0 \\
		b, \ & \text{if}\ b < 0
		\end{cases}
		,
		\end{equation}
		and
		\[
		f(b) = \frac{1}{2}a^2, \quad f(0) = \frac{1}{2}a^2 + \frac{\gamma}{2} b^2.
		\]
		
	\end{enumerate}

	Based on \eqref{Eq:solution1-relu} and \eqref{Eq:solution2-relu}, we obtain the solution to Problem \eqref{Eq:ReLU-min} by considering the following four cases.
	
	\begin{enumerate}
		\item $a+\gamma b \geq 0, b \geq 0$:
	In this case, we need to compare the values $f\left( \frac{a+\gamma b}{1+\gamma}\right)  = \frac{\gamma}{2(1+\gamma)}(b-a)^2$ and $f(0) = \frac{1}{2}a^2 + \frac{\gamma}{2} b^2$. It is obvious that
	\[
	u^* = \frac{a+\gamma b}{1+\gamma}.
	\]
	
	\item $a+\gamma b \geq 0, b < 0$:
	In this case, we need to compare the values $f\left( \frac{a+\gamma b}{1+\gamma}\right)  = \frac{\gamma}{2(1+\gamma)}(b-a)^2$ and $f(b) = \frac{1}{2}a^2$.
	By the hypothesis of this case, it is obvious that $a>0$.
	We can easily check that
	\begin{equation*}
	u^* =
	\begin{cases}
	\dfrac{a+\gamma b}{1+\gamma}, & \text{if}\ -(\sqrt{\gamma(\gamma+1)}-\gamma)a \leq \gamma b <0, \\[2mm]
	b, & \text{if}\ -a \leq \gamma b \leq - (\sqrt{\gamma (\gamma +1)} - \gamma)a<0.
	\end{cases}
	\end{equation*}
	
	\item $a+\gamma b < 0, b \geq 0$: It is obvious that
	\[u^* = 0.\]
	
	\item $a+\gamma b < 0, b < 0$: It is obvious that
	\[u^* = b.\]
	\end{enumerate}
	
	Thus, the solution to Problem \eqref{Eq:ReLU-min} is
	\begin{equation*}
	\mathrm{prox}_{\frac{1}{2\gamma}(\sigma(\cdot)-a)^2}(b)  =
	\begin{cases}
	\dfrac{a+\gamma b}{1+\gamma},  & \text{if}\ a + \gamma b \geq 0, \ b \geq 0, \\[4mm]
	\dfrac{a+\gamma b}{1+\gamma},  & \text{if}\ -(\sqrt{\gamma(\gamma+1)}-\gamma)a \leq \gamma b <0,\\[4mm]
	b,  & \text{if}\ -a \leq \gamma b \leq - (\sqrt{\gamma (\gamma +1)} - \gamma)a<0, \\
	\min\{b,0\},  & \text{if} \ a + \gamma b <0.
	\end{cases}
	\end{equation*}
	\hfill$\Box$
\end{proof}
%$\Box$\\

\subsection{The closed form of the proximal operator of hinge loss}
\label{app:prox}
Consider the following optimization problem
\begin{align}
\label{Eq:hinge-min}
u^* = \argmin_u g(u) := \max\{0,1-a\cdot u\} + \frac{\gamma}{2}(u-b)^2,
\end{align}
where $\gamma>0$.
\begin{lemma}
	\label{Lemm:solution-hinge-min}
	The optimal solution to Problem \eqref{Eq:hinge-min} is shown as follows
	\begin{equation*}
	\mathrm{hinge}_{\gamma}(a,b) =
	\begin{cases}
	b, &\ \text{if} \ a=0,\\
	b+\gamma^{-1}a, &\ \text{if} \ a \neq 0 \ \text{and}\ ab\leq 1-\gamma^{-1}a^2,\\
	a^{-1}, &\ \text{if} \ a \neq 0 \ \text{and}\ 1-\gamma^{-1}a^2 <ab<1,\\
	b, &\ \text{if} \ a \neq 0 \ \text{and}\ ab\geq 1.
	\end{cases}
	\end{equation*}
\end{lemma}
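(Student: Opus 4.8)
The plan is to exploit that, since $\gamma>0$, the objective $g(u)=\max\{0,1-au\}+\frac{\gamma}{2}(u-b)^2$ is the sum of a finite convex piecewise-linear function and a strictly convex, coercive quadratic, hence is itself proper, lower semicontinuous, strictly convex and coercive. Consequently $g$ admits a \emph{unique} global minimizer $u^*$, and by Fermat's rule this minimizer is characterized by $0\in\partial g(u^*)$. Since the quadratic term is smooth, the sum rule for subdifferentials gives $\partial g(u)=\gamma(u-b)+\partial\phi(u)$ with $\phi(u):=\max\{0,1-au\}$, and the whole argument reduces to solving the inclusion $0\in\partial g(u^*)$ in closed form, which is possible because $\phi$ is piecewise affine.

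First I would dispose of the degenerate case $a=0$: then $\phi\equiv1$ is constant, so $\partial g(u)=\{\gamma(u-b)\}$ and the unique root is $u^*=b$, matching the first branch. For $a\neq0$ the function $\phi$ has a single kink at $\hat u:=1/a$, with
\[
\partial\phi(u)=\begin{cases}\{-a\}, & au<1,\\ [\min\{0,-a\},\max\{0,-a\}], & au=1,\\ \{0\}, & au>1.\end{cases}
\]
I would then examine the three candidate points dictated by this structure: the minimizer $b+\gamma^{-1}a$ of the active quadratic $1-au+\frac{\gamma}{2}(u-b)^2$, the minimizer $b$ of the inactive quadratic $\frac{\gamma}{2}(u-b)^2$, and the kink $\hat u=1/a$. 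At $b+\gamma^{-1}a$ the stationarity $\gamma(u-b)-a=0$ holds automatically, and the feasibility requirement $au\le1$ becomes $ab\le1-\gamma^{-1}a^2$; at $b$ the stationarity $\gamma(u-b)=0$ holds and $au\ge1$ becomes $ab\ge1$. These yield the second and fourth branches directly.

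The only genuinely delicate step is the middle branch $1-\gamma^{-1}a^2<ab<1$, where the minimizer sits exactly at the kink. There the inclusion reads $\gamma(b-\hat u)\in\partial\phi(\hat u)=[\min\{0,-a\},\max\{0,-a\}]$, i.e. the scalar $t:=\gamma(b-1/a)$ must lie between $0$ and $-a$. To handle both signs of $a$ uniformly I would rewrite this as the single inequality $t(t+a)\le0$; substituting $ta=\gamma(ab-1)$ turns it into $(ab-1)\,(ab-1+\gamma^{-1}a^2)\le0$, whose roots are $1$ and $1-\gamma^{-1}a^2$, so it holds precisely when $1-\gamma^{-1}a^2\le ab\le1$. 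This certifies $\hat u=1/a$ on the interior and also shows the three branches agree at the endpoints (at $ab=1-\gamma^{-1}a^2$ one has $b+\gamma^{-1}a=1/a$, and at $ab=1$ one has $b=1/a$), so the piecewise formula is continuous and the strict-versus-nonstrict placement of the boundaries is immaterial. The main obstacle throughout is the sign bookkeeping when translating the conditions $au\le1$ versus $au\ge1$ and the interval membership into conditions on $b$; expressing everything through the products $ab$ and $\gamma^{-1}a^2$ (equivalently, via $t(t+a)\le0$) removes the case split on the sign of $a$ and keeps the computation short. As an alternative entirely avoiding subdifferentials, I could mirror the proof of \Cref{Lemm:solution-relu-min}: minimize the two branch quadratics under the constraints $au\le1$ and $au\ge1$ respectively and compare the resulting optimal values, which produces the same four cases.
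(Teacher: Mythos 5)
Your proof is correct, and it takes a genuinely different (convex-analytic) route from the paper's. The paper argues by splitting on the sign of $a$ (three cases $a>0$, $a=0$, $a<0$), writing $g$ explicitly as two branch quadratics on the regions where the hinge is active or inactive, reading off the constrained minimizer of each branch, and only afterwards merging the sign-dependent inequalities on $b$ into the unified conditions on $ab$. You instead use strict convexity and coercivity to get a unique minimizer, characterize it by $0\in\partial g(u^*)=\gamma(u^*-b)+\partial\phi(u^*)$, and test the three candidate points dictated by the piecewise-affine structure of $\phi$. The payoff of your version is the treatment of the kink: encoding ``$t$ lies between $0$ and $-a$'' as the single inequality $t(t+a)\le 0$ and substituting $ta=\gamma(ab-1)$ eliminates the case split on the sign of $a$ entirely and exhibits the boundary values $ab=1$ and $ab=1-\gamma^{-1}a^2$ as the roots of one quadratic in $ab$; the paper's version is more elementary (no subdifferential calculus) but must carry the sign of $a$ through every inequality. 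Your explicit check that the branches agree at the endpoints, so that the strict versus non-strict placement of the boundary cases is immaterial, is a point the paper leaves implicit.
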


\begin{proof}
	%\textbf{Proof.}
	We consider the problem in the following three different cases: (1) $a>0$, (2) $a=0$ and (3) $a<0$.
	\begin{enumerate}[label=(\arabic*)]
		\item $a>0$: In this case,
		\begin{equation*}
		g(u)
		=
		\begin{cases}
		1-au+\dfrac{\gamma}{2}(u-b)^2, & \text{if} \ u<a^{-1},\\[2mm]
		\dfrac{\gamma}{2}(u-b)^2,  & \text{if} \ u\geq a^{-1}.
		\end{cases}
		\end{equation*}
		It is easy to show that the solution to the problem is
		\begin{align}
		\label{Eq:hing-sol-part1}
		u^* =
		\begin{cases}
		b+\gamma^{-1}a, &\ \text{if} \ a >0 \ \text{and}\ b\leq a^{-1}-\gamma^{-1}a,\\
		a^{-1}, &\ \text{if} \ a >0 \ \text{and}\ a^{-1}-\gamma^{-1}a <b<a^{-1},\\
		b, &\ \text{if} \ a >0 \ \text{and}\ b\geq a^{-1}.
		\end{cases}
		\end{align}
		
		\item $a=0$: It is obvious that
		\begin{align}
		\label{Eq:hing-sol-part2}
		u^* =b.
		\end{align}
		
		\item $a<0$: Similar to (1),
		\begin{align*}
		g(u)
		=
		\begin{cases}
		1-au+\dfrac{\gamma}{2}(u-b)^2, \ & u\geq a^{-1},\\[2mm]
		\dfrac{\gamma}{2}(u-b)^2, \ & {u < a^{-1}}.
		\end{cases}
		\end{align*}
		Similarly, it is easy to show that the solution to the problem is
		\begin{align}
		\label{Eq:hing-sol-part3}
		u^* =
		\begin{cases}
		b+\gamma^{-1}a, &\ \text{if} \ a <0 \ \text{and}\ b\geq a^{-1}-\gamma^{-1}a,\\
		a^{-1}, &\ \text{if} \ a <0 \ \text{and}\ a^{-1}<b<a^{-1}-\gamma^{-1}a ,\\
		b, &\ \text{if} \ a <0 \ \text{and}\ b \leq a^{-1}.
		\end{cases}
		\end{align}		
	\end{enumerate}
Thus, we finish the proof of this lemma.
	\hfill$\Box$	
\end{proof}

\section{BCD vs. SGD for training ten-hidden-layer MLPs}
\label{sc:BCD-SGD-10layerMLP}

In this experiment, we attempt to verify the capability of BCD for training MLPs with many layers. Reproducible PyTorch codes can be found at: \url{https://github.com/timlautk/BCD-for-DNNs-PyTorch} or \url{https://github.com/yao-lab/BCD-for-DNNs-PyTorch}.

Specifically, we consider the DNN training model \eqref{Eq:dnn-admm} with ReLU activation, the squared loss, and the network architecture being an MLPs with ten hidden layers, on the MNIST data set.
%Due to the well-known vanishing gradient issue, the vanilla SGD might usually fails to train such deep MLPs.
The specific settings were summarized as follows:
\begin{enumerate}[label = (\alph*)]
	\item For the MNIST data set, we implemented a 784-(600$\times$10)-10 MLPs (i.e., the input dimension $d_0 = 28\times 28 = 784$,  the output dimension $d_{11} = 10$, and the numbers of hidden units are all 600), and set $\gamma=\alpha=1$ for BCD.
	The sizes of training and test samples are 60000 and 10000, respectively.

	\item The learning rate of SGD is 0.001 (a very conservative learning rate to see if SGD can train the DNNs). More greedy learning rates such as 0.01 and 0.05 have also been used, and similar failure of training is also observed.
	
	\item For each experiment,
	we used the same mini-batch sizes (512) and initializations for all algorithms.
	Specifically, all the weights $\{\bW_i\}_{i=1}^N$ are initialized from a Gaussian distribution with a standard deviation of $0.01$ and the bias vectors are initialized as vectors of all $0.1$, while the auxiliary variables $\{\bU_i\}_{i=1}^N$ and state variables $\{\bV_i\}_{i=1}^N$ are initialized by a single forward pass.
	
\end{enumerate}

Under these settings,
we plot the curves of training accuracy (acc.) and test accuracy (acc.) of BCD and SGD as shown in \Cref{Fig:acc_deep}.
%(the mean of 10 runs of different random seeds)
According to \Cref{Fig:acc_deep}, vanilla SGD usually fails to train such deeper MLPs since it suffers from the vanishing gradient issue \citep{Goodfellow-et-al-2016},
whereas BCD still works and achieves a moderate accuracy within a few epochs.

\end{document}